\title{}
\author{}
\newcommand{\eps}{\varepsilon}
\newcommand{\N}{\mathbb N}
\newcommand{\Pd}{\widetilde{\mathbb P}}
\newcommand{\Pc}{\mathbb P}
\newcommand{\Prob}{\mathbb P}
\newcommand{\Gc}{G}
\newcommand{\I}{{\mathchoice {\rm 1\mskip-4mu l} {\rm 1\mskip-4mu l}
{\rm 1\mskip-4.5mu l} {\rm 1\mskip-5mu l}}}
\newcommand{\Fc}{F}
\newcommand{\Xc}{X}
\newcommand{\Yc}{Y}
\newcommand{\Tc}{T}
\newcommand{\Xd}{\widetilde{X}}
\newcommand{\Yd}{\widetilde{Y}}
\newcommand{\Td}{\widetilde{T}}
\newcommand{\Gp}{G^{ev}}
\newcommand{\G}{G}
\newcommand{\cE}{\mathcal E}
\newcommand{\Zd}{{{\mathbb Z}^d}}
\newcommand{\PP}{\mathbb P}
\newcommand{\K}{\mathcal K}
\newtheorem{theorem}{Theorem}[section]
\newtheorem{lem}[theorem]{Lemma}
\newtheorem{rem}[theorem]{Remark}
\newtheorem{pro}[theorem]{Proposition}
\newtheorem{defn}[theorem]{Definition}
\newenvironment{proof}{\smallskip\noindent\emph{Proof.}\hspace{1pt}}%
{\hspace{-5pt}{\nobreak\quad\nobreak\hfill\nobreak$\square$\vspace{8pt}%
\par}\smallskip\goodbreak}
\newcommand{\Z}{\ensuremath{\mathbb{Z}}}
\numberwithin{equation}{section}
\begin{document}
\begin{center}
\textbf{\Large The small world effect on the coalescing time\\ of random walks\\}
\vspace{0.5 cm} 
\begin{tabular}{cccc}
by &Daniela Bertacchi$^1$ &and& Davide Borrello$^{1,2}$\\
&daniela.bertacchi@unimib.it && d.borrello@campus.unimib.it
\end{tabular}
\footnotetext[1]{
\begin{tabular}{l}Dipartimento di Matematica e Applicazioni\\
Universit\`a degli Studi di Milano Bicocca\\
Via Cozzi 53\\ 
20125 Milano, Italy.
\end{tabular}
$^2$\begin{tabular}{l}
Laboratoire de Math\'ematiques Rapha\"el Salem\\
UMR 6085 CNRS-Universit\'e de Rouen\\
Avenue de l'Universit\'e BP.12\\
F76801 Saint-\'Etienne-du-Rouvray, France.
\end{tabular}}
\end{center}
\begin{abstract}
A small world is  obtained from the $d$-dimensional torus
of size $2L$ adding randomly chosen connections between sites, in
a way such that each site has exactly one random neighbour in addition to its
deterministic neighbours. We study the asymptotic behaviour of the meeting time $T_L$
of two random walks moving on this small world and compare it with the result
on the torus. On the torus, in order to have convergence, we have
to rescale $T_L$ by a factor $C_1L^2$ if $d=1$, by $C_2L^2\log L$ if $d=2$
and $C_dL^d$ if $d\ge3$. We prove that on the small world the rescaling
factor is $C^\prime_dL^d$ and identify the constant $C^\prime_d$, proving that
the walks always meet faster on the small world than on the torus if $d\le2$, while
if $d\ge3$ this depends on the probability of moving along the random connection.
As an application, we obtain results on the hitting time to the origin of a single walk
and on the convergence of coalescing random walk systems on the small world.
\end{abstract}
\noindent \textbf{Key words:} small world, random walk, coalescing random walk.\\

\noindent \textbf{AMS 2010 subject classification:} Primary 60K37; Secondary 60J26,  60J10.

\section{Introduction}
\label{intr}

Graphs provide a mathematical model in many scientific
areas, from physics (magnetization properties
of metals, evolution of gases) to biology (neural networks, disease
spreading) and sociology (social networks, opinion spreading). 
Individuals (atoms, molecules, neurons, animals) are identified with
vertices and an edge drawn between two vertices identifies a relation as
proximity or existence of some sort of contact.
When a part or the totality of the edges are subject to some randomness,
it is natural to deal with random graphs (see \cite{Bollobas_book} for a survey).
One can construct a random graph starting from a deterministic graph either 
by adding random connections, or by removing some connections randomly, 
as in percolation. A particular class of random graphs of the first type are 
\textit{small world graphs}, constructed starting from a 
$d$-dimensional (discrete) torus, whose edges are called \textit{short range connections}, 
adding some random connections, called \textit{long range connections}.

Bollobas and Chung \cite{cf:BollobasChung} first noted that adding 
a random matching in a cycle (i.e. $d=1$), the average distance  
between sites is considerably smaller than on the deterministic graph.
Watts and Strogatz \cite{cf:WattsStrogatz} introduced, as a model for biological 
applications, the random graph obtained in $d=1$ with each site connected
to the ones at Euclidean distance smaller than $m$ and long range connections constructed by 
taking the deterministic ones and by moving with probability $p$ 
one of the end sites to a new one chosen at random.
Another possible construction was introduced by Newmann and Watts 
\cite{cf:NewmannWatts}: they took the same deterministic 
short range connections of Watts and Strogatz, but they added a density $p$ of 
long range connections between randomly chosen sites.
Average distance between sites and clustering coefficient of small world graphs
have been well investigated (\cite{cf:AlbertBarabasi},\cite{cf:BarbourReinert}, 
\cite{cf:WattsStrogatz}). See \cite{cf:Durrett} for a historical introduction of 
small world graphs and main results.

Recently some authors have been focusing on processes taking place on random graphs.
Durrett and Jung \cite{cf:DurrettJung} have studied the contact process on the small world. 
Their version of the small world (which is also the one we study in the present
work) is a generalization of the Bollobas-Chung model: they take
the $d$-dimensional torus $\Lambda^{d}(L)=\Z^d \mod 2L$ with short range connections 
between each pair of vertices at Euclidean distance smaller than $m$. The long range
connections are drawn choosing at random a partition of the $(2L)^d$ vertices in pairs 
%(for this reason the number of vertices is required to be even)
and connecting each pair of the partition (see Section \ref{BCSW} for more details about 
the construction). 
Note that all sites have exactly one long range neighbour, which may coincide with a short
range one. Nevertheless with large probability most of the sites have a true long range
neighbour, and we choose a random walk which makes the small world
``stochastically  homogeneous'' (see the definition of the transition matrix $P_S$ in 
Section~\ref{BCSW}). 
%and the degree of the graph is constant:
%being this small world graph homogeneous makes it easier to study processes on it. 
The main advantage of such a costruction is that we can associate to the random graph a 
non-random translation invariant graph $\mathcal{B}$, called \textit{big world}.
 To get an idea of how the big world looks like, see Figure 1.
One starts with a copy of $\Z^d$ and then attaches to each site an edge joining this site to
another copy of $\Z^d$ and proceeds by (infinite) iteration. This edge represents the long range
connection (thus for instance in dimension 1, if the long range neighbour of 0 is 3, then 
3 is represented in the big world by the site three steps away from 0 in the first copy of $\Z$, 
but also by the site at the endpoint of the ``long range'' edge attached to 0 -- and indeed
by many other sites).
For more details on this deterministic graph and on its 
relationship with the small world
see \cite{cf:DurrettJung} where the big world was first introduced and
 Section \ref{bigworld} .

One expects that if the distance between sites plays an important role 
(as for random walks, coalescing random walk or the contact process),
a process taking place on a small world will behave differently from the same one on the 
torus.
We consider random walks on the small world and, under some assumptions on the starting sites, we
study the asymptotic behaviour of three sequences of random times: the time $W_L$ after which
a single random walk first hits the origin, the time $T_L$ after which
two random walks first meet and the coalescing time $\tau_L$ of a coalescing random walk starting
from a fixed number of particles. Recall that the coalescing random walk on a graph is a 
Markov process in which $n$ particles perform independent random walks  subject to the rule 
that when one particle jumps onto an already occupied site, the two particles coalesce to one. 
The time when we first have only one particle left is called
 \textit{coalescing time}.

It is natural to compare our results with the corresponding results on the torus:
for the simple symmetric continuous time random walk on the $d$-dimensional torus, 
Cox \cite[Theorem 4]{cf:Cox} proved (under some assumptions on the initial position) 
that for $d= 2$ 
$W_L/C_2(2L)^2\log(2L)$, with $C_2=2/\pi$,  and
for $d\ge3$, $W_L/C_d(2L)^d$, with $C_d$ equal to the expected number of visits to the
origin of a discrete time simple symmetric random walk, converge to an exponential of mean 1.
One can also get the same result for the random walk starting from the 
stationary distribution (this was proved in \cite[Theorem 6.1]{cf:Flatto} in discrete time) 
as a corollary. 

Cox and Durrett (see \cite[Theorem 2]{cf:Durrett_steppingI}) proved a result in 
the $2$-dimensional case under more general conditions 
on the starting point and on the transition matrix for a discrete time random walk. 
%The rescaling factor is still given by a constant times $L^2 \log L$.\\
The case $d=1$ is slightly different: Flatto, Odlyzko and Wales \cite[Theorem 6.1]{cf:Flatto} 
proved that for the discrete time random walk starting 
from the uniform distribution $W_L/L^2$ converges to a certain law.
It is possible to show that these results also hold in continuous time.

Note that, by the symmetry of the walks on the 
torus, it is easy to show that the meeting time $T_{L}$ of two independent 
random walks $X_{t}$ and $Y_{t}$ on the torus, conditioned to $X_{0}=x$ and $Y_{0}=y$, 
coincides with the law of $2W_{L}$ conditioned to the starting point $x-y$. 
Therefore from Theorems \cite[Theorem 4]{cf:Cox}, \cite[Theorem 2]{cf:Durrett_steppingI}) 
and \cite[Theorem 6.1]{cf:Flatto} one easily deduces
 the asymptotic behaviors of the meeting time 
of two particles.

Since a small world is a random graph, studying random walks on it we have
two sources of randomness: the graph and the walk. We denote by 
 $\mathcal{S}^L$ the random variable whose possible values are the
small worlds of size $L$ and by $S$ (or $S^L$ if we need to stress the dependence
on $L$) one of the possible realizations 
of $\mathcal{S}^L$. If $\Delta$ is the transition matrix of
an adapted (i.e. transition from $x$ to $y$ may occur only if they are connected 
by an edge) and translation invariant symmetric random walk on $\Lambda^d(L)$
and $\beta\in(0,1)$, once  a small world $S$ is fixed,
the random walk we consider on it moves according to $\Delta$ with
probability $1-\beta$ and moves along the long range connection
with probability $\beta$.
We denote by $\mathbf P$ the uniform probability on all the small worlds;
given a small world $S$, we denote by $\Prob^{\mu,\nu}_{S}$ 
the joint law of two  independent continuous time 
random walks starting from the probability distributions $\mu$ and $\nu$.
By $\Prob$ (with no pedex) we denote the average over all small worlds
(see Section \ref{BCSW} for the formal definitions).

We look for results for each graph $S$ in a set of large $\mathbf P$-probability 
(``\textit{quenched}" point of view -- note that it is not possible to have
almost sure results) and average results 
 (i.e. with respect to $\Prob$ -- ``\textit{annealed}" point of view).
% We study random walks both in discrete and continuous time. 

Durrett \cite[Chapter $6$]{cf:Durrett} %studied the coalescing random walk on small worlds. He 
proved, for a large class of random graphs with $N$ vertices, 
that for each sequence $\{S^{N}\}_{N\ge0}$ of small worlds chosen in sets of large 
$\mathbf P$-probability, if $T_N$ is the meeting time of two particles starting 
from the stationary distribution, then $T_N/CN$ (for some $C>0$)
converges to the exponential distribution.
In particular such a result holds for the small worlds we consider in dimension one
(clearly with $N=2L$).

With different techniques we prove more accurate results in dimension $d$.
We suppose that the two random walks start respectively from the origin 0
and from a site $x_L$ and we prove convergence to an exponential law
of $T_L/C_d(2L)^d$, under the assumption that either $\{x_L\}_L$ is
constant or moves towards infinity at a sufficiently large speed
(bear in mind that on $S$ there are two distances between two sites $x$ and $y$: 
the Euclidean one $|x-y|$ and the - random - graph distance $d_S(x,y)$).
Moreover we identify the constant $C_d$, which is a fundamental tool
to compare our results with the corresponding ones on the torus in $d\ge3$.

Recall that there exists a deterministic graph called big world $\mathcal B$
which can be mapped onto the small world.
Through the inverse of this map (see Section \ref{bigworld} for details) we associate 
to each site $x \in \Lambda^d(L)$ a unique site $+(x)$ in $\mathcal B$,
and to a random walk on the small world we associate a random walk on $\mathcal B$,
whose law is denoted by $\mathbb{P}_{\mathcal{B}}$.  We denote by
$\Gp_{\mathcal{B}}(x)$ the expected time spent at site $0$
by the random
walk at speed 2 starting from $+(x)$ on $\mathcal B$, namely
\begin{equation}\label{eq:GBevcont}
 \Gp_{\mathcal{B}}(x) :=\int_{0}^{\infty}\Prob^{+(x)}_{\mathcal{B}}(X_{2t}=0)dt.
\end{equation}
Note that such constants depend on the probability $\beta$.
We omit this dependence to avoid cumbersome notation. We also write $0$
for $+(0)$ for simplicity's sake. 
\begin{theorem}
%Denote by $0$ the origin of the big world $+(0)$ and by $d_{S}(0,x)$ 
%the length of the shortest path connecting $x$ to $0$ in the small world $S$. 
Let 
\[f(x,t)=\exp(-t/\Gp_{\mathcal B}(0))\left\{
\frac{1}{\Gp_{\mathcal B}(0)}\delta_0(x)+\left(1-
\frac{\Gp_{\mathcal B}(x)}{\Gp_{\mathcal B}(0)}\right)(1-\delta_0(x))\right\},\]
and let $g(t)=\exp(-t/\Gp_{\mathcal B}(0))$.
\begin{enumerate}
\item Let $x_L\in\Lambda(L)$ for all $L$ such that $x_L=x$ for all $L$ sufficiently large.
Then %if $x\neq0$, 
uniformly in $t\geq 0$
\begin{equation}
 \Prob^{x_L,0}\left( \frac{T_L}{(2L)^d}>t\right)\stackrel{L\to\infty}{\to}
f(x,t)%\left(1-\frac{\Gp_{\mathcal{B}}(x)}{\Gp_{\mathcal{B}}(0)}\right)\exp\left(-\frac t{\Gp_{\mathcal{B}}(0)}\right).
\label{eq:T1}
\end{equation}
%If $x=0$ then  uniformly in $t\geq 0$
%\begin{equation}
% \Prob^{0,0}\left( \frac{T_L}{(2L)^d}>t\right)\stackrel{L\to\infty}{\to}
%\frac{1}{\Gp_{\mathcal{B}}(0)}
%\exp\left(-\frac t{\Gp_{\mathcal{B}}(0)}\right).
%\label{eq:T2}
%\end{equation}
\item Let $\alpha_L\geq (\log \log L)^2$, then uniformly in $t\geq 0$
and $x_L$ such that $|x_L|\ge\alpha_L$, 
\begin{align}
 \Prob^{x_L,0}\left(\frac{T_L}{(2L)^d}>t\right)\to 
g(t).
%\exp\left(-\frac t{\Gp_{\mathcal{B}}(0)}\right).
%%\exp\left(-\frac t{G_{BW}^{even}(0,0)}\right).
\end{align}
\item Let $x_L\in\Lambda(L)$ for all $L$ such that $x_L=x$ %\neq0$ 
for all $L$ sufficiently large.
For all $\eps>0$
\begin{equation}
  \mathbf P\left(S%\in\mathcal S^L({\Omega})
: \left|\Prob_S^{x_L,0}\left(\frac{T_L}{(2L)^d}>t\right)%\right.\right.\\
- f(x,t)
%\left.\left.\left(1-\frac{G_{\mathcal{B}}(x)}{\Gp_{\mathcal{B}}(0)}\right)
%\exp\left(-\frac t{\Gp_{\mathcal{B}}(0)}\right)
\right|
<\eps, \forall t\geq 0\right)\stackrel{L\to\infty}{\to}1.
\label{eq:T1q}
\end{equation}
%If $x=0$ then \begin{multline}
% \mathbf P\left(S\in\mathcal S^L({\Omega})
%: \left|\Prob_S^{0,0}\left(\frac{T_L}{(2L)^d}>t\right)%\right.\right.\\
%-\left.\left.\frac{1}{\Gp_{\mathcal{B}}(0)}
%\exp\left(-\frac t{\Gp_{\mathcal{B}}(0)}\right)\right|
%-\left.\left.\frac{1}{G_{BW}^{even}(0,0)}
%\exp\left(-\frac t{G_{BW}^{even}(0,0)}\right)\right|
%<\eps,  \forall t\geq 0\right)\stackrel{L\to\infty}{\to}1.
%\label{eq:T2q}
%\end{multline}
\item Let $\alpha_L\geq(\log \log L)^2$. For all $\eps>0$
\begin{equation}
  \mathbf P\left(S%\in\mathcal S^L({\Omega})
:\sup_{\{x_{L}:d_{S}(0,x_{L})\geq \alpha_{L}\}} 
\left|\Prob_S^{x_L,0}\left(\frac{T_L}{(2L)^d}>t\right)
%\right.\right.\\
%\left.\left.-%\exp\left(-\frac t{\Gp_{\mathcal{B}}(0)}\right)
-g(t)
\right|
<\eps,  \forall t\geq 0\right)\stackrel{L\to\infty}{\to}1.
\label{eq:qlim}
\end{equation}
\end{enumerate} 
\label{th:main}
\end{theorem}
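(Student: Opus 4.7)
The plan is to prove the theorem via an Aldous-type decomposition into two time scales: a short scale $s_L$ on which the walks explore locally and either meet quickly or decorrelate, and a long scale of order $(2L)^d$ on which they have mixed to equilibrium and meeting occurs at a nearly constant Poissonian rate. I would choose $s_L\to\infty$ slowly enough that the coupling of the small world walk with the big world walk is still exact, yet fast enough to exceed the small world mixing time; a polynomial choice such as $s_L=(2L)^{d/3}$ should suffice. The fundamental factorisation to establish is
\[
\Prob^{x_L,0}_S\!\left(\frac{T_L}{(2L)^d}>t\right)\;\approx\;\Prob^{x_L,0}_S(T_L>s_L)\cdot\exp(-t/\Gp_{\mathcal B}(0)),
\]
in the appropriate (annealed or quenched) sense. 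The two factors contribute respectively the prefactor depending on $x$ in $f$ and the exponential $\exp(-t/\Gp_{\mathcal B}(0))$ common to $f$ and $g$.

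The key tool for the first factor is the coupling with the big world $\mathcal B$ from Section~\ref{bigworld}. Up to the first time $\tau_L$ at which two sites already visited by the lifted trajectory happen to be identified under the random matching of $S$, the difference walk $X_t-Y_t$ on $S$ projects to a random walk on $\mathcal B$ with law $\mathbb P_{\mathcal B}$. A birthday-paradox union bound yields $\mathbf P\otimes\Prob_S(\tau_L\leq s_L)=O(s_L^2/(2L)^d)=o(1)$. Hence, up to $o(1)$, the probability of meeting before time $s_L$ equals the probability that a walk on $\mathcal B$ starting at $+(x_L)$ hits the origin before time $s_L$. For fixed $x_L=x\neq 0$, this converges to the big world hit probability $\Gp_{\mathcal B}(x)/\Gp_{\mathcal B}(0)$, leaving survival probability $1-\Gp_{\mathcal B}(x)/\Gp_{\mathcal B}(0)$ as in $f$. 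For $x_L=0$, the difference walk starts at the origin and an excursion decomposition identifies the probability of completing a full excursion before mixing as $1/\Gp_{\mathcal B}(0)$, matching the $\delta_0(x)/\Gp_{\mathcal B}(0)$ term. For $|x_L|\geq\alpha_L$ in the annealed case (or $d_S(0,x_L)\geq\alpha_L$ in the quenched case), the lifted walks start too far apart in $\mathcal B$ to meet within $s_L$, so the prefactor reduces to $1$ and $f$ collapses to $g$.

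For the second factor, once the walks have survived past $s_L$ they are well mixed on $\Lambda^d(L)$. A stationary computation gives that the average rate at which the two walks collide under the product uniform distribution is $\approx 1/((2L)^d\Gp_{\mathcal B}(0))$; indeed, the total expected collision time over an interval of length comparable to $(2L)^d$ is $\Gp_{\mathcal B}(0)$ by the same coupling with $\mathcal B$. Approximating the remainder of $T_L$ as the first arrival of a Poisson process of rate $1/((2L)^d\Gp_{\mathcal B}(0))$, along the lines of \cite{cf:Cox} and \cite{cf:Durrett_steppingI}, yields the exponential limit. Combined with the first step, this proves the annealed statements (1) and (2); uniformity in $t$ follows from the monotonicity of the survival probabilities and the continuity of the limits via a standard Dini-type argument.

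The quenched statements (3) and (4) are upgraded from the annealed ones by a concentration argument under $\mathbf P$: the variance of $\Prob^{x_L,0}_S(T_L/(2L)^d>t)$ over the law of $S$ is controlled by showing that, for two independently sampled small worlds, the joint meeting time distributions factorise up to a vanishing correction, which follows from the fact that the walks only probe $o((2L)^d)$ long-range connections on the relevant time scale. Chebyshev's inequality then upgrades convergence in $\Prob$ to convergence in $\mathbf P$-probability. The principal obstacle of the proof is the uniformity over $x_L$ with $d_S(0,x_L)\geq\alpha_L$ required in part~(4): one needs a $\mathbf P$-probability bound decaying faster than any polynomial in $L$, so as to beat the polynomial-in-$L$ number of candidate starting points via a union bound. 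This should be achieved by a high moment bound on $\tau_L$, exploiting the near-independence of the random matching on disjoint portions of $\mathcal B$.
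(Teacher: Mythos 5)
Your two-scale heuristic is the right picture, but three steps as written would fail. First, the reduction of the meeting problem to a hitting problem via ``the difference walk $X_t-Y_t$'' is invalid: the small world is not vertex transitive, and even the big world is the Cayley graph of the non-abelian group $\Z^d*\Z_2$, so the ``difference'' of two independent walks is not a single walk with law $\Prob_{\mathcal B}$ (the introduction of the paper stresses exactly this point). What is true, and what the paper uses, is the symmetry identity $\Prob_S^{x,0}(X_t=Y_t)=\Prob_S^{x}(X_{2t}=0)$ combined with a renewal decomposition at the first meeting time \emph{and meeting position} $z$, which forces one to control the diagonal Green function $\int\Prob_S^{z}(X_{2q}=z)\,dq$ at the random meeting point (Lemmas~\ref{th:H1cont}--\ref{th:H3cont}). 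Your local factor $1-\Gp_{\mathcal B}(x)/\Gp_{\mathcal B}(0)$ happens to be numerically correct in continuous time only because $\G_{\mathcal B}=2\Gp_{\mathcal B}$, and the rate $1/\Gp_{\mathcal B}(0)$ (rather than the single-walk rate $1/\G_{\mathcal B}(0)$ of Theorem~\ref{th:1walk}) only emerges once the two-particle renewal is carried out; the mechanism you give does not produce it. Also, $s_L=(2L)^{d/3}$ is far too large for any quenched coupling with $\mathcal B$: the ball of that radius is the whole graph (the diameter is $O(\log L)$), which is why the paper cuts at $t_L=\log\log L$ and shows the intermediate window contributes $o(1)$ to the Green function via the mixing bound.

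Second, the quenched upgrade. Your variance argument rests on the claim that the walks ``only probe $o((2L)^d)$ long-range connections on the relevant time scale'', but the relevant time scale is $t(2L)^d$, on which each walk takes order $(2L)^d$ steps and probes essentially the entire matching; the second-moment factorisation therefore has no justification for the long-time part of the event. Likewise, the union bound you propose for the supremum in part (4) needs the bad event to have $\mathbf P$-probability $o((2L)^{-d})$, but the structural bad event (a long-range edge closing a short cycle near a prescribed vertex) has probability of order $M^{ct_L}/L^d$, only polynomially small, and no moment bound on your $\tau_L$ can beat that. The paper sidesteps both issues: beyond time $t_L$ it uses a \emph{deterministic} estimate $|\Prob_S^x(X_t=y)-(2L)^{-d}|\le e^{-\gamma t}$ valid simultaneously for every $S$ in the explicit isoperimetric event $Q^L_\alpha$ (Propositions~\ref{th:isosw} and~\ref{th:conv-estim}), so no concentration over $S$ is needed there; the only random local input is tree-likeness of the ball around $0$ -- not around $x_L$, since when $d_S(0,x_L)>t_L^2$ the local factor is identically $1$, which is precisely what makes the supremum in (4) costless -- and the residual annealed error is converted to a quenched one by Markov's inequality applied to a sign-definite error term (Lemma~\ref{th:H2quenchedcont}). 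You would need to replace your concentration and union-bound steps by arguments of this type.
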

The main tools in the proof of this result are: the use of
the Laplace transform (much in the footsteps of \cite{cf:Cox});
the fact that with large $\mathbf P$-probability
a large (but not too large) neighbourhood of a fixed
vertex looks like the big world and the fact that for very 
large times the random walker is approximately uniformly distributed
on the graph. We prove Theorem \ref{th:main} for continuous time
random walks, but for discrete time random walks (which we denote by
$\widetilde X_n$) the same arguments
lead to the corresponding result. The only difference is that instead
of $\Gp_{\mathcal{B}}(x)$ one has the expected number of visits at
even times
\begin{equation}\label{eq:GBevdiscr}
 \widetilde{G}^{ev}_{\mathcal{B}}(x)
 :=
\sum_{n=0}^{\infty}\widetilde{\Prob}^{+(x)}_{\mathcal{B}}(\widetilde{X}_{2n}=0).
\end{equation}
Moreover, by a similar argument one proves the result for the hitting time
to the origin (see  Theorem \ref{th:1walk}). 
As a corollary one can get the law of the meeting time of two random walks and the 
law of the hitting time to the origin of a single walker starting from the uniform distribution.
Note that while on a translation invariant graph one immediately 
deduces the results on the meeting time of
two walkers from the results on the hitting time 
(as on the torus), on random graphs like the small world this is not possible.

We are now able to compare the growth speed of $T_L$ on the torus and on the small
world (when the distance between the two starting points goes to
infinity): depending on the dimension $d$, there are a function $f_d(L)$ and
a constant $C_d$ such that $T_L/C_df_d(L)$ converges in law.
The comparison is summarized in Table 1 (where $\Gp_{\mathbb{Z}^{d}}(0)$
is the expected time spent at 0 by the process $\{X_{2t}\}_{t\ge0}$ on $\Z^d$).
\begin{table}[htbp]\caption{$C_d$ and $f_d$ such that $T_L/C_df_d(L)$ converges in law.}
\begin{center}
\begin{tabular}{|c|c|c|}
\hline
&&\\
$d$ & Torus  & Small world \\
    \begin{tabular}{c}                        
                     \hline  \\
                       1\\
\\
                       2\\
\\
                       $\ge$ 3\\
                       \end{tabular}
 &                       \begin{tabular}{l|l}
                       $f_d(L)$ & $C_d$  \\
                       \hline
                       \\
                       $L^{2}$       &  $1/12$ \\
                       \\
                       $L^{2}\log L$ &  $1/\pi$ \\
                       \\
                       $L^{d}$  & $\Gp_{\mathbb{Z}^{d}}(0)$  \\
                       &\\
                       \end{tabular}
&       \begin{tabular}{l|l}
                       $f_d(L)$ & $C_d$   \\
                       \hline
                       \\
                         $L$     &  $\Gp_{\mathcal{B}}(0) $ \\
                       \\
                         $ L^{2}$       &  $ \Gp_{\mathcal{B}}(0)$ \\
                       \\
                        $L^{d}$ &  $\Gp_{\mathcal{B}}(0)$\\
                        &\\
                       \end{tabular}\\ 
                       \hline                                                                         
\end{tabular}
\end{center} 
\end{table}
If $d\leq 2$ the small world effect is clear 
(adding random connection speeds up the meeting time); if $d\geq 3$ 
 we need to compare the two constants $\Gp_{\mathbb{Z}^{d}}(0)$ and
$\Gp_{\mathcal{B}}(0)$. Recall that these quantities depend on 
$\beta$ (the probability with which the random walk moves along the
long range connection). We prove in the Appendix that 
if $\beta$ is small then $\Gp_{\mathbb{Z}^{d}}(0)> \Gp_{\mathcal{B}}(0)$, 
while if $\beta$ is close to $1$ then $\Gp_{\mathbb{Z}^{d}}(0)< \Gp_{\mathcal{B}}(0)$.
Thus two particles meet faster on the small world than on the torus
if $\beta$ is small, but meet faster on the torus if $\beta$ is large.
This means that where the limiting space $\Z^d$ is transient ($d\ge3$), a small
probability of taking a connection towards a distant site (the long range neighbour)
makes it easier to meet, but if this probability is too large then it is
easier for the two walkers to get lost instead of meeting.
Unfortunately identifying the value of $\beta$ at which the inequality 
between $\Gp_{\mathbb{Z}^{d}}(0)$ and $\Gp_{\mathcal{B}}(0)$ reverses seems
a difficult task, which goes beyond the aim of this paper.
One strategy could be try to find numerical approximations of the two constants
by evaluating with a combinatorial procedure 
the $n$-step return probabilities on $\Z^d$ up to time $n_0$ and substituting
in \eqref{eq:GBevcont} this evaluation up to step $n_0$ and 
the asymptotic value of the return probabilities
(see for instance \cite[Theorem 13.10]{cf:Woess}) for $n>n_0$. Of course
this has to be repeated for a large set of values of $\beta$ and one also needs to tackle the question
of how large $n_0$ needs to be in order to make the error small.

The third random time we are interested in is the coalescing time.
In \cite[Chapter $6$]{cf:Durrett}, the author sketched a proof that the number 
of particles of a normalized $n$-coalescing random walk (that is with $n$ particles at time $0$) 
starting from the stationary distribution, moving according to the simple symmetric
random walk, in $d=1$, on the small world, converges to the Kingman's coalescent. 
Recall that the Kingman's coalescent is a Markov process starting from $n$ individuals 
without spatial structure: each couple has an exponential clock with mean $1$ after 
which the two particles coalesce (see \cite{cf:Cox}, \cite{cf:CoxGriffeath} 
and \cite{cf:Tavare}).
We use Theorem \ref{th:main} to get new information about the number of particles 
$(|\xi_{t}(A)|)_{t\geq 0}$ of the coalescing random walk $(\xi_{t}(A))_{t\geq 0}$ 
starting from $A=\{x_{1},\ldots,x_{n}\}$, $x_{i}\in \Lambda^d(L)$ for $1\leq i\leq n$ 
in continuous time, extending the previous result to $d$-dimensional 
small worlds with general transition probabilities and more general initial 
distance between particles. We prove the following, where $M$ is the number 
of deterministic neighbours of each site (depending on the model, $M=2d+1$ or
$M=(2m+1)^d$).
\begin{theorem}
Let $h_{L}\geq (\log \log L)^{2}$ such that $\lim_{L\to\infty}M^{4h_{L}}/(2L)^{d}=0$, 
then for each $A=\{x_{1},\ldots,x_{n}\}\subset \Lambda^d(L)$ with $|x_{i}-x_{j}|\geq h_{L}$ 
for $i\neq j$, $T>0$ there exists a sequence of sets $\{H^L\}_{L}$ of small world graphs 
such that $\mathbf P(H^L)\stackrel{L\to \infty}{\to} 1$ and for each sequence $\{S^L\}_{L}$, 
$S^L\in H^L$, uniformly in $0\leq t\leq T$ 
\begin{equation}
\left|\PP^{A}_{S^L}\
\left(|\xi_{s_{L}t}(A)|< k\right)-P_{n}\left(D_{t}< k\right)\right|\stackrel{L\to \infty}{\to}0, \qquad
k=2,\ldots,n.
\label{eq:coalescing}
\end{equation}
where $\PP^{A}_{S^L}$ is the law of $(\xi_{t}(A))_{t\geq 0}$ on $S^L$, 
$s_{L}=(2L)^{d}\Gp_{\mathcal{B}}(0)$ and $P_n$ is the law of the number of particles $D_{t}$ at time $t\geq 0$ in a Kingman's 
coalescent starting from $n$ particles. 
\label{npart}
\end{theorem}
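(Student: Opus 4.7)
The plan is to argue by induction on the number of particles $n$, using Theorem \ref{th:main}(4) both as the base case $n=2$ and as the fundamental per-pair input at each step. Assume the statement holds for $n-1$ and consider $n$ particles at pairwise distance $\geq h_L$. Two facts have to be established: (a) the first coalescence time $\tau_1$ in the coalescing system satisfies $\tau_1/s_L \Rightarrow \operatorname{Exp}(\binom{n}{2})$, producing the correct Kingman jump rate for the transition $n\to n-1$; and (b) immediately after $\tau_1$, the $n-1$ surviving particles are at pairwise distance $\geq h_L$ with quenched $\Prob_{S^L}$-probability tending to $1$. Combining (b) with the strong Markov property at $\tau_1$ closes the induction.

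\textbf{First coalescence.} Let $T^{i,j}_L$ denote the meeting time of walks $i$ and $j$ viewed as two independent walks on $S^L$; before any coalescence the coalescing and independent systems coincide, so $\tau_1 = \min_{i<j} T^{i,j}_L$. Applying Theorem \ref{th:main}(4) to each pair separately yields, uniformly on a set of small worlds of $\mathbf P$-probability $\to 1$ and uniformly in starting configurations with $|x_i-x_j|\geq h_L$, that $\Prob^{x_i,x_j}_{S^L}(T^{i,j}_L/s_L > t) \to e^{-t}$ uniformly in $t\geq 0$. Asymptotic independence of the $\binom{n}{2}$ pair meeting times, despite the fact that different pairs share walks, is obtained from an inclusion-exclusion on Laplace transforms in the spirit of \cite{cf:Cox}: a joint event $\bigcap_{(i,j)\in\mathcal J}\{T^{i,j}_L/s_L > t_{i,j}\}$ factorizes in the limit because on the scale $s_L$, which overwhelmingly exceeds the mixing time of the walk on $S^L$, conditioning on ``no pair having yet met'' has asymptotically vanishing influence on the joint law of the positions. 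This delivers $\tau_1/s_L \Rightarrow \operatorname{Exp}(\binom{n}{2})$.

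\textbf{Regeneration.} Because the mixing time of the walk on $S^L$ is polylogarithmic in $L$ while $s_L=(2L)^d\Gp_{\mathcal B}(0)$ is polynomial, by time $\tau_1 = \Theta(s_L)$ the joint positions of any two surviving particles are $o(1)$-close in total variation to a pair of independent uniform samples on $\Lambda^d(L)$. A ball of radius $h_L$ contains at most $M^{h_L}$ sites, so two uniform samples land at distance $< h_L$ with probability $\leq M^{h_L}/(2L)^d$; a union bound over the $\binom{n-1}{2}$ surviving pairs gives $\Prob_{S^L}$-probability $1-o(1)$ of pairwise separation by $h_L$, using the assumption $M^{4h_L}/(2L)^d\to 0$ with room to spare. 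This validates (b) and closes the inductive step.

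\textbf{Main obstacle.} The most delicate point is the Laplace-transform factorization underlying the asymptotic independence of pair meeting times, for which the quenched heat-kernel estimates on $S^L$ underpinning Theorem \ref{th:main} are essential. A secondary issue is the uniform quenched control across all $n-1$ coalescence epochs: since $n$ is fixed, the good set $H^L$ can be built as a finite intersection of events of $\mathbf P$-probability $\to 1$ (one per epoch and per surviving configuration type), so the overall $\mathbf P(H^L)\to 1$ is preserved. Finally, the uniformity in $t\in[0,T]$ claimed in \eqref{eq:coalescing} follows from monotonicity of $t\mapsto |\xi_{s_Lt}(A)|$ and continuity and monotonicity of $t\mapsto P_n(D_t<k)$, by a standard Dini-type argument on the compact interval $[0,T]$.
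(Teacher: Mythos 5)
Your overall architecture --- induction on $n$ with Theorem \ref{th:main}.4 as base case and per-pair input, an exponential limit for the first coalescence time, a separation (``regeneration'') estimate after the first collision, and the Markov property at that time to close the induction --- is exactly the paper's. The genuine gap is in the first-collision step. You assert that the $\binom{n}{2}$ pair meeting times are asymptotically independent because ``conditioning on no pair having yet met has asymptotically vanishing influence'', and from this conclude $\tau_1/s_L\Rightarrow\operatorname{Exp}\bigl(\binom{n}{2}\bigr)$. But different pairs share walks, and this is precisely the point that requires an argument; as written it restates the conclusion. The paper (Lemma \ref{coalescenten}) does \emph{not} prove factorization of joint survival events. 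Instead it writes, for each pair $\{i,j\}$,
\[
\PP^A_S\bigl(\tau(i,j)\le s_Lt\bigr)=\PP^A_S\bigl(\tau=\tau(i,j)\le s_Lt\bigr)+\sum_{\{k,l\}\ne\{i,j\}}\int_0^{s_Lt}\PP^A_S\bigl(\tau=\tau(k,l)\in ds,\ \tau(i,j)\le s_Lt\bigr),
\]
uses the separation estimate (Lemma \ref{inc}) to guarantee that at the first collision time of $(k,l)$ the pair $(i,j)$ is at distance $>h_L$ with high probability, so that the residual meeting law of $(i,j)$ can be replaced by the two-particle exponential from Theorem \ref{th:main}; summing over pairs yields a closed integral equation for $q_t=\PP^A_S(\tau(A)\le s_Lt)$,
\[
q_t=\binom{n}{2}\bigl(1-e^{-t}\bigr)-\Bigl(\binom{n}{2}-1\Bigr)e^{-t}\int_0^tq_se^s\,ds+R,
\]
with $|R|$ uniformly small, and the stability result \cite[Lemma 2]{cf:CoxGriffeath} identifies the limit $1-\exp\bigl(-\binom{n}{2}t\bigr)$. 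Until you supply either this renewal/integral-equation argument or an actual proof of your claimed factorization, the rate $\binom{n}{2}$ is not established.

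A secondary, repairable issue: your regeneration step reasons ``by time $\tau_1=\Theta(s_L)$ the positions are near uniform'', but $\tau_1$ can be smaller than the mixing time with positive probability, and you must also control the positions of the \emph{other} particles at the random time $\tau_1$ rather than at a deterministic time. The paper handles both in Lemma \ref{inc} by splitting the relevant integral at $\frac d\gamma\log(2L)$: the contribution of $\{\tau_1\le \frac d\gamma\log(2L)\}$ is negligible because the pair meeting time lives on scale $s_L=(2L)^d\Gp_{\mathcal B}(0)$, and beyond that time the equilibrium estimate \eqref{eq:estim_cont1} yields exactly your union bound $n^2M^{h_L}/(2L)^d$. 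The remaining ingredients of your sketch (building $H^L$ as a finite intersection of high-probability events, and upgrading pointwise to uniform convergence on $[0,T]$ by monotonicity) agree with the paper.
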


We remark that the small world we consider is a random graph where each site has
 a single long range connection. One can show analogous results for random graphs with a
fixed number $K>1$ (not depending on $L$) of long range connections per site, added to
 the $d$-dimensional torus or to a translation invariant finite graph. 
The exponential limit will have a different parameter which we guess would be the 
expected time spent at the origin on a different big world structure.

%%%%%%%%%%%%%%%%%%%%%%%%%%%%%%%%%%%%%%%%%%%%%%%%%%%%%%%%%%%%%%%%%%%%%%%%%%%%%%%%%%
We give here a brief outline of the paper.
In Section \ref{BCSW} we give the formal definitions needed in the 
sequel and give some technical results. In Subsection \ref{BCSW} we formally
define the small world (actually two versions of it, depending on the notion of deterministic
neighbourhood one chooses) and the random walk on it.
 In Subsection \ref{bigworld} we describe the big world and its relationship
with the small world. Moreover we prove that with large probability a ball
of radius $t(L)$ (with $t(L)$ which does not grow too fast) in the small world looks 
exactly like the corresponding
ball in the big world (that is there are no long range connections reaching inside
the ball -- see Proposition \ref{th:xgood}). Proposition \ref{th:stimedS}
gives useful lower bounds on the probability that the graph distance and the
Euclidean distance between two points are equal, and on the probability, if the latter
is large, that also the former is large. As we already mentioned, one of the
keys in our proofs is that when time is relatively small, thanks to  Proposition \ref{th:xgood}
the random walker moves with large $\mathbf P$-probability as if she were on the big world.
On the other hand, for large times we use the fact that she is close to the stationary
distribution. In Subsection \ref{convergence}
we state Proposition \ref{th:conv-estim} which
is an estimate on the speed of convergence to equilibrium. Its proof 
uses known estimates, involving the isoperimetric constant.
 This is the reason why we need the results in Subsection \ref{iso_constant},
which roughly speaking say that with large $\mathbf P$-probability
the isoperimetric constant is large.
Section \ref{hitting_time} is devoted to the estimates of the asymptotic behaviour
of the Laplace transforms  of the meeting time of
two random walks, one starting at $x$ and the other starting at $0$:
$F^L(x,\lambda)$ is the ``annealed'' transform (i.e. with respect to $\Prob$)
and $F^L_S(x,\lambda)$ is the ``quenched'' transform (i.e. with respect to $\Prob_S$).
To obtain these estimates, we need to evaluate the Laplace transforms
of the time spent together of two random walks, namely
$G^L(x,\lambda)$ and $G^L_S(x,\lambda)$. These results are used in
Section \ref{hit_time} where we prove Theorem \ref{th:main} and the result
on the hitting time of the origin.
In Section 
\ref{coalescing} we introduce the coalescing random walk and we prove the convergence 
theorem to Kingman's coalescent. Finally in the Appendix we compare
 $\Gp_{\mathbb{Z}^{d}}(0)$ and $\Gp_{\mathcal{B}}(0)$, 
which allows to compare our results with the ones on the meeting time on the $d$-dimensional 
torus when $d\geq 3$. 

\section{Preliminaries}

\subsection{The small world}
\label{BCSW}

The vertices of the random graph are the ones of the $d$-dimensional torus, which we denote by 
$$\Lambda(L)=\Lambda^d(L)=(\Z \mod 2L)^d,$$
when there is no ambiguity, we will omit the dependence on $d$.

The set of edges $\cE^L$ of the graph is partly deterministic (short range
connections) and partly random (long range connections). Note that
we consider nonoriented edges, that is, if $(x,y)\in\cE^L$ then
also $(y,x)\in\cE^L$ (thus we identify edges with subsets of order two).

We will consider two kinds of short range connections, one between neighbours
(i.e.~vertices $x,y$ such that $\Vert x-y\Vert_1=1$), and the other 
between vertices  $x,y$ such that $\Vert x-y\Vert_\infty\le m$: 
the corresponding neighbourhoods are 
$$
\begin{array}{ll}
\mathcal N(x)=\{y \in \Lambda(L):\Vert x-y\Vert_{1}= 1\},& x \in \Lambda(L),\\
\\
\mathcal N_{m}^\infty(x)=\{y \in \Lambda(L):\Vert x-y\Vert_{\infty}\leq m\},& x \in \Lambda(L),
m\in\N.
\end{array}
$$
For all $x,y\in\Lambda(L)$ we denote by $d_S(x,y)$ the graph distance between $x$ and $y$.
Let $\Omega$ be the set of all partitions of the set of $\Lambda(L)$ 
into $(2L)^{d}/2$ subsets of cardinality two. Let $\mathbf P$ be the uniform
probability on $\Omega$: the random choice of 
$\omega\in \Omega$ represents the choice of the set of long
range connections (some of which may coincide with short range ones). Note
that both $\Omega$ and $\mathbf P$ depend on $L$.

\begin{defn}
 Let $\mathcal G^L$ be the family of all graphs with set of vertices $\Lambda(L)$.
The small world $\mathcal S^L$ is a random variable $\mathcal S^L({\omega}):\Omega\to \mathcal G^L$ such 
that $\mathcal S^{L}(\omega)
=\left(\Lambda(L),\cE^L(\omega)\right)$, where
\[
 \cE^L(\omega)=\omega\cup\{
\{x,y\}:x\in\Lambda(L),y\in\mathcal N(x)\}.
\]
The set of edges of the small world $\mathcal S^L_m({\Omega})$ is defined as
\[
 \cE^L_m(\omega)=\omega\cup\{
\{x,y\}:x\in\Lambda(L),y\in\mathcal N_m^\infty(x)\}.
\]
We denote by $\mathcal S^L({\Omega})=\{\mathcal S^L({\omega}):{\omega} 
\in {\Omega}\}$ and by $\mathcal S_m^L({\Omega})=\{\mathcal S_m^L({\omega}):{\omega} 
\in \widetilde{\Omega}\}$.\\
For any fixed $\omega$, given two short range neighbours $x$ and $y$, 
we write $x\sim^{SR} y$; if they are long range neighbours we write $x\sim^{LR}  y$
(it may happen that $x\sim^{SR}  y$ and $x\sim^{LR}  y$ at the same time).
\end{defn}
Note that $\mathbf P$ clearly defines a probability measure on $\mathcal G^L$: with a slight abuse of notation we denote this measure with 
$\mathbf P$ as well. Given $\omega$, we will also call ``small world'' the graph 
$\mathcal S^L(\omega)$.
For the sake of simplicity we will focus here on the case $\mathcal S^L$,
but our proofs can be adapted to $\mathcal S^L_m$. Moreover,
when there is no ambiguity, we will write $\mathcal S$ and $\mathcal S_m$
instead of $\mathcal S^L$ and $\mathcal S^L_m$.
\begin{rem}
 We note that the small world could be defined imposing that we consider
as probability space $\Theta\subset\Omega$, the family of partitions where no couple is a short range connection (thus
the random graph has fixed degree), instead of $\Omega$.
The results of the paper would not be different.
\end{rem}

We consider discrete and continuous time random walks on small worlds, here is the definition
regarding the discrete ones.
%Given a small world, we consider a random walk on it.
%We assume that the discrete time random walk is assigned through an 
%adapted (i.e. transition from $x$ to $y$ may occur only if they are connected 
%by an edge), symmetric and translation invariant
%transition matrix $\Delta$ on the torus and the (random) matrix $A_S$ where the $x,y$ entry
%is 1 if and only if $x$ and $y$ are long range neighbours and 0 otherwise.
%The transition matrix of the walk is
%$P_{S}=\beta \Delta+(1-\beta)A_S$. 

\begin{defn}\label{def:law}
Let $\Delta$ be an adapted, symmetric and translation invariant
transition matrix on the torus, $A_S$ be the (random) matrix where the $x,y$ entry
is 1 if and only if $x$ and $y$ are long range neighbours and 0 otherwise, and 
$\mu$ be a probability measure on $\Lambda(L)$.
\begin{enumerate}
\item
  Given a small world $S$, the transition matrix of the walk is
$P_{S}=\beta \Delta+(1-\beta)A_S$ and we denote by
$\Pd^\mu_{S}$ the law of the discrete time random walk on $S$ with initial
probability $\mu$ and transitions ruled by $P_{S}$.
If $\mu=\delta_{x_0}$ we write $\Pd^{x_0}_{S}$.
\item
We denote by
$\Pd^\mu$ the average of  $\Pd^\mu_{S}$ with respect to $\mathbf P$, that is
\[
 \Pd^\mu(
\mathcal C(x_0,\ldots,x_n))=\sum_{S\in %A
\Omega}
\mathbf P(S)\mu(x_0)p_{S}(x_0,x_1)\cdots 
p_{S}(x_{n-1},x_n),
\]
where  
$\mathcal C(x_0,\ldots,x_n)$ is the set of all infinite sequences of vertices
where the first $n$ coordinates coincide with
%cylinder with base
 $(x_0,\ldots,x_n)$.
\end{enumerate}
\end{defn}
We construct the continuous time version $\Xc_{t}$ of the random walk $\Xd_{t}$ by continuation. 
In other words we define $\Xc_{t}:\stackrel{d}{=}\Xd_{N_{t}}$ where $N_{t}$ 
is a Poisson process with rate $1$ independent of $\Xd_{t}$: the law of $\Xc_{t}$ on 
$S$ starting from a probability measure $\mu$ on $\Lambda(L)$ is given by
\begin{equation}
\Pc^{\mu}_{S}(\Xc_t=y)=\sum_{k=0}^{\infty}\frac{e^{-t}t^k}{k!}\Pd_{S}^{\mu}(\Xd_k=y).
\label{Xcont}
\end{equation}
%We define a family of translations on $\Omega$.
%\begin{defn}
% Let $h\in\Lambda(L)$. The map $T_h:\Omega\to\Omega$
%is such that
%\[
% \{v_1,v_2\}\in T_h(\omega)\quad\Longleftrightarrow
%\exists \{s_1,s_2\}\in \omega, v_i=s_i+h, i=1,2.
%\]
%The sum is in $(\Z^d\mod 2L)^d$. With a slight abuse of notation
%we denote by $T_h(\mathcal S)$ the
%random graph $T_h(\mathcal S)(\omega):=\mathcal S(T_h(
%\omega))$.
%\end{defn}
From now on %, if not otherwise stated, 
$\Delta$, hence also the family of
transition matrices $\{P_{S}\}_{S\in\mathcal S^L(\Omega)}$,
 is considered fixed.
%It is not difficult to prove the following proposition: the first four
%assertions follow from the symmetry of $P_{S}$; the last two
%from the fact that it depends only on the ``type of relation'' between
%the two sites (short range and/or long range neighbours). The continuous time claims follow from the discrete ones (\ref{Xcont}).
%\begin{pro}
%Let $\Xd_t$ and $\Yd_t$ be two discrete time random walks 
%on $\mathcal S$ and let $\pi$ be the uniform probability on $\Lambda(L)$.
%Then the following hold:
%\begin{enumerate}
% \item 
%$\Pd^{x}(\Xd_{t}=y)=\Pd^{y}(\Xd_{t}=x)$;
%\item
%$\Pd^{x,y}(\Xd_{t}=\Yd_{t})=\Pd^x(\Xd_{2t}=y)$;
%\item
%$\pi$ is the stationary probability measure;
%\item
%$\Pd^{\pi,\pi}(\Xd_{t}=\Yd_{t})=\frac{1}{(2L)^{d}}$;
%\item
%$\Pd^{y}(\Xd_{t}=x)=\mathbb{P}^{0}(\Xd_{t}=y-x)$;
%\item
%$\Pd^{x,y}(\Xd_{t}=\Yd_{t})=\Pd^{0,y-x}(\Xd_{t}=\Yd_{t})$.
%\end{enumerate}
%The same statements hold for two continuous time walks $\Xc_{t}$ and $\Yc_{t}$.
%\end{pro}
%\section{Techniques}
%\label{technique}

\subsection{The big world}
\label{bigworld}

The small worlds $\mathcal S^L$ and $\mathcal S^L_m$ 
(which are random graphs)
can be mapped into deterministic graphs, the \textit{big worlds}
$\mathcal B$ and $\mathcal B_m$ respectively, as in 
\cite{cf:DurrettJung}.  
We recall here the construction.
The sites are all vectors $\pm(z_{1},\ldots,z_{n})$, with $n \geq 1$ components, for all $n\in \N$
$z_{j}\in \mathbb{Z}^{d}$ and $z_{j}\neq 0 $ for $j < n$.
The edges in $\mathcal B$
are drawn between $+(z_{1},\ldots,z_{n})$ and $+(z_{1},\ldots,z_{n}+y)$ 
if and only if $y\in\mathcal N(0)$; for $\mathcal B_m$ we consider
$y\in\mathcal N_m^\infty(0)$ (we call these edges short range
connections). The same is done between $-(z_{1},\ldots,z_{n})$ and $-(z_{1},\ldots,z_{n}+y)$.
%for vectors with a minus sign.
Moreover $+(z_{1},\ldots,z_{n})$ has a long range neighbour, namely
$$
\begin{array}{ll}
+(z_{1},\ldots,z_{n},0) & \text{if }z_n\neq0,\\
+(z_{1},\ldots,z_{n-1}) & \text{if }z_n=0, n\geq 1,\\
-(0) & \text{if }z_n=0,n=1.
\end{array}
$$
Analogously one defines the long range neighbour of $-(z_{1},\ldots,z_{n})$.
Note that the big world is a vertex transitive graph (i.e. the automorphism group acts transitively).
We denote by $|x|$ the graph distance on the big world from $x$ to $+(0)$ and we also write 0
instead of $+(0)$. See Figure 1 (which is taken from \cite{cf:DurrettJung}) for the
case $d=1$.
\begin{figure}[h]\label{fig:bw1}
   \begin{center}
    \includegraphics[width=10cm]{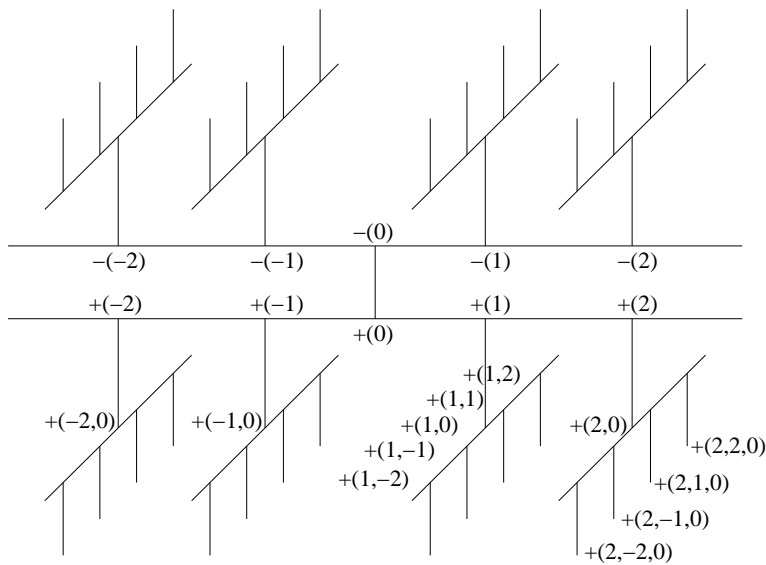}
  \end{center}
 \caption{A portion of the big world in $d=1$.}
\end{figure}

We construct a random map $\phi(\omega)$ which maps the big world onto the
 small world $\mathcal S(\omega)$ in a way such that long range connections
in the big world correspond to long range connections in the small world.
\begin{defn}
Given a small world $S$ and $x\in\Lambda(L)$, let
$LR_{S}(x)$ be the long range neighbour of $x$.
The map $\phi:\Omega\to\Lambda(L)^{\mathcal B}$ is recursively defined as follows:
\[\begin{array}{ll}
 \phi(\omega)(+(z))& =z \mod(2L),\\
\phi(\omega)(-(z))& =LR_{\mathcal S(\omega)}(0)+z \mod(2L),\\
\phi(\omega)(\pm(z_1,\ldots,z_n))& =LR_{\mathcal S(\omega)}\left(\phi(\omega)(\pm(z_1,\ldots,z_{n-1})\right)+z_n \mod(2L).\\
\end{array}
\]
\end{defn}
Note that the transition matrix $P_S$ on the small world naturally 
induces a symmetric and translation invariant discrete time
random walk $\{\Xd_n\}_{n\ge0}$ on the big world (moving with probability $\beta$
along the long range connection and with probability $1-\beta$
according to the transition matrix $\Delta$).
We denote by $\Pd^x$ the law of $\{\Xd_n\}_{n\ge0}$ with initial position $x$.
One can prove, by using Cauchy-Schwarz's inequality, the symmetry and the translational invariance
of the walk, that for all $x\in\mathcal B$ and $n\in\N$, 
\begin{equation}\label{eq:Pbwub}
\Pd^x_{\mathcal{B}}(\Xd_{2n}=0)\le \Pd^0_{\mathcal{B}}(\Xd_{2n}=0);\qquad \Pd^x_{\mathcal{B}}(\Xd_{2n+1}=0)\le \Pd^0_{\mathcal{B}}(\Xd_{2n}=0).
\end{equation}
%Indeed using Cauchy-Schwarz's inequality, the symmetry and the translational invariance
%of the walk,
%\begin{equation*}\begin{split}
%\Pd^x_{BW}(\Xd_{2n}=0)&= \sum_w\Pd^x_{BW}(\Xd_{n}=w)\Pd^w_{BW}(\Xd_{n}=0)\\
%& \le \sqrt{\sum_w\left(\Pd^x_{BW}(\Xd_{n}=w)\right)^2}\sqrt{\sum_w\left(\Pd^w_{BW}(\Xd_{n}=0)\right)^2}\\
%& = \sqrt{\Pd^x_{BW}(\Xd_{2n}=x)} \sqrt{\Pd^0_{BW}(\Xd_{2n}=0)}=\Pd^0_{BW}(\Xd_{2n}=0).
%\end{split}
%\end{equation*}
%On the other hand
%\begin{equation*}\begin{split}
%\Pd^x_{BW}(\Xd_{2n+1}=0)&= \sum_w\Pd^x_{BW}(\Xd_{2n}=w)\Pd^w_{BW}(\Xd_{1}=0)\\
%& \le \Pd^0_{BW}(\Xd_{2n}=0)\sum_w\Pd^0_{BW}(\Xd_{1}=w) = \Pd^0_{BW}(\Xd_{2n}=0).
%\end{split}
%\end{equation*}
Using \eqref{Xcont} we get the continuous time version $\{X_t\}_{t\ge0}$ and we have 
that for each $t\geq 0$
\begin{equation}\label{eq:Pbwubcont}
 \begin{array}{ll}
\Pc^x_{\mathcal{B}}(\Xc_{2t}=0)&\le \Pc^0_{\mathcal{B}}(\Xc_{2t}=0).
 \end{array}
\end{equation}

Let $\widetilde{\G}_{\mathcal{B}}(x):=\sum_{n=0}^\infty \Pd_{\mathcal{B}}^{x}(\Xd_{n}=0)$ be the
expected number of visits to 0 of $\{\Xd_{n}\}_{n\ge0}$ (recall that
in \eqref{eq:GBevdiscr} we introduced
${\widetilde G}^{ev}_{\mathcal B}(x)$)
and let ${\G}_{\mathcal{B}}(x)=\int_{0}^{\infty}\Pc_{\mathcal{B}}^{x}(X_{t}=0)dt$ be
the expected time spent at 0 by the continuous time process (recall \eqref{eq:GBevcont}
to compare with  $\Gp_{\mathcal B}(x)$).
%(starting from $x$ and associated to $\{P_S\}_{S}$) and let $\Gp_{\mathcal{B}}(x)$ 
%and $\widetilde{\G}_{\mathcal{B}}(x)$ as in (\ref{B}). 
We can prove, starting from \eqref{Xcont}, that
$\widetilde{\G}_{\mathcal{B}}(x)=\G_{\mathcal{B}}(x)$ and by a change of variable,
that $\G_{\mathcal{B}}(x)=2\G_{\mathcal{B}}^{ev}(x)$.

Clearly  $\widetilde G^{ev}_{\mathcal{B}}(x)\le \widetilde G_{\mathcal{B}}(x)$ and they coincide if the random
walk has period 2 (in which case they are nonzero only if $|x|$ is even).
Note that if $m=1$ the big world is the Cayley graph
of $\Z^d*\Z_2$ and the random walk on it is transient and $\G_{\mathcal{B}}(x)$ is finite. 
%Indeed if $m=1$ and $d=1$ the big world is the homogeneous tree $\mathbb T_3$ and
%any translation invariant random walk on it is transient
%(the big world is a free product - $\mathbb Z_2*\mathbb Z_2*\mathbb Z_2$ - and the
%random walk is adapted, see for instance \cite{cf:Woess}).\\
%If $m=1$ and $d\ge2$ then the big world is the Cayley graph
%of $\Z^d*\Z_2$ and again the random walk is transient.
If $m\ge2$ the big world is the Cayley graph
of $\widehat\Z^d*\Z_2$, where $\widehat\Z^d$ has the
$m$-neighbourhood relation, and the random walk is still transient (this can be proven via
the flow criterion, see \cite{cf:Woess}).
Moreover, by \eqref{eq:Pbwubcont}, %in the periodic case for all $x$,
%$G_{BW}(x,0)\le G_{BW}(0,0)$, while in the aperiodic case $G_{BW}(x,0)\le 2G_{BW}(0,0)$.
$\Gp_{\mathcal{B}}(x)\le \Gp_{\mathcal{B}}(0)$, and the analogous
inequality holds in discrete time.

We are interested in the event where %\textit
{locally the small world
does not differ from the big world}.
\begin{defn}\label{def:I(x,t)}
If $x\in\Lambda(L)$ and $t>0$, we denote by $I(x,t)$
the event in $\Omega$
\[
I(x,t):= \{\phi_{|B_{\mathcal{B}}(x,t)}\text{ is injective}\},
\]
where $B_{\mathcal{B}}(x,t)$ is the ball of radius $t$ centered at $x$ in the big world.
\end{defn}
Clearly $\mathbf P(I(x,t))$ does not depend on $x$.

\begin{pro}\label{th:xgood}
Let $M=2d+1$ for $\mathcal B$ and $M=(2m+1)^d$
for $\mathcal B_m$ and let $t=t(L)$ be a function of $L$ such that
$M^{4t(L)}=o(L^d)$. %If $t\leq (\log \log L)^2$ t
Then for sufficiently large $L$
  \[
   \mathbf P(I(x,t))\ge1-\frac{CM^{4t}}{L^d}
\stackrel{L\to\infty}{\to}1,
  \]
where $C$ is a positive constant.
\end{pro}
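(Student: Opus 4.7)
The plan is to estimate $\mathbf P(I(x,t)^c)$ via a union bound over unordered pairs of distinct big-world vertices in $B_{\mathcal B}(x,t)$, bounding each collision probability $\mathbf P(\phi(u)=\phi(v))$ by a sequential exploration of the uniform random partition $\omega$.

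First I would control the ball size. Since the big world has bounded degree $M$, a standard counting argument yields $|B_{\mathcal B}(x,t)| \leq c\,M^{t}$ for an absolute constant $c$. The recursive definition of $\phi$ shows that, for every $w\in B_{\mathcal B}(x,t)$, the value $\phi(w)$ is determined by at most $t$ long-range queries $LR_S(\cdot)$ (one for each long-range edge along a big-world path from $+(0)$ to $w$). Hence computing both $\phi(u)$ and $\phi(v)$ for a pair $u\neq v$ requires revealing at most $2t$ pairs of the partition $\omega$.

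Second, I would bound $\mathbf P(\phi(u)=\phi(v))$ for a fixed pair by revealing these at most $2t$ pairs one at a time (say, first those needed for $\phi(u)$, then the additional ones for $\phi(v)$). Under the uniform law on partitions, conditional on the already-revealed pairs the partner $LR_S(y)$ of a site $y$ not yet matched is uniform on the $(2L)^d - O(t)$ unmatched sites of $\Lambda(L)$. The event $\{\phi(u)=\phi(v)\}$ becomes, at the moment the last relevant query is made, a single equation specifying where that partner must land, and therefore has conditional probability at most $C/(2L)^d$. The corner case in which $u$ and $v$ lie in the same $\mathbb Z^d$-copy of the big world -- so that $\phi(u)-\phi(v)$ is the deterministic difference of their last coordinates modulo $2L$ -- is ruled out directly since $t\ll L$.

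Combining these ingredients via the union bound yields
\[
\mathbf P(I(x,t)^c) \;\leq\; \binom{|B_{\mathcal B}(x,t)|}{2}\cdot\frac{C}{(2L)^d} \;\leq\; \frac{C'\,M^{2t}}{L^d} \;\leq\; \frac{C'\,M^{4t}}{L^d},
\]
which is the desired bound (with some room to spare), and the hypothesis $M^{4t(L)}=o(L^d)$ then gives $\mathbf P(I(x,t))\to 1$. The main obstacle is the second step: one must keep careful track of which sites are already matched by previous queries and verify that each fresh partner is indeed (approximately) conditionally uniform on the unmatched sites. When a site $y$ appearing as the argument of $LR_S$ has itself already been matched, $LR_S(y)$ is deterministic and no new collision risk is introduced, so the sequential exploration actually simplifies rather than creating additional complications in those cases.
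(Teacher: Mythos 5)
Your overall strategy---a union bound over pairs $u\neq v$ in $B_{\mathcal B}(x,t)$ combined with a sequential revelation of the random matching---is a legitimate alternative to the paper's argument. (The paper instead lower-bounds the probability of one global event: that each of the $K_t\le 3M^t$ long-range partners needed to build $\phi$ on the ball lands at $\Lambda(L)$-distance at least $2t$ from everything placed before it; this forces injectivity wholesale, and a telescoping product over the $K_t$ reveals is where the exponent $4t$ comes from. Your route, if completed, would even give the sharper exponent $2t$.) However, there is a genuine gap in your per-pair bound, located exactly in the case you dismiss. When the argument $y$ of the last relevant query $LR_S(y)$ has already been matched by an earlier reveal, it is \emph{not} true that ``no new collision risk is introduced'': conditionally on that history the collision event is deterministic and may hold with probability one. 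Concretely, take $u=+(z_1,z_2)$ and $v=+(w_1,w_2)$ with $z_1\neq w_1$, both nonzero, and $w_1-z_1=w_2-z_2$. On the event that $z_1$ and $w_1$ are long-range partners of each other (probability $((2L)^d-1)^{-1}>0$), revealing $LR_S(w_1)=z_1$ forces $LR_S(z_1)=w_1$, hence $\phi(u)=w_1+z_2=z_1+w_2=\phi(v)$ with conditional probability $1$, not $C/(2L)^d$. So the assertion that the last query reduces to ``a single equation with conditional probability at most $C/(2L)^d$'' fails on that branch of the exploration; the per-pair bound is still true, but your stated reasoning does not establish it.

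The repair is not hard but has to be written out: split according to whether the exploration of the at most $2t$ queries needed for $\phi(u)$ and $\phi(v)$ is ``self-avoiding,'' meaning every queried site is still unmatched when it is queried and every revealed partner avoids the $O(t)$ sites already touched. The complement of this event has probability $O(t^2/(2L)^d)$ per pair (each reveal must hit one of $O(t)$ forbidden sites out of $(2L)^d-O(t)$ available ones), and on the self-avoiding event your argument goes through: the deciding partner is conditionally uniform on the unmatched sites, contributing $O(1/(2L)^d)$. This gives $\mathbf P(\phi(u)=\phi(v))\le Ct^2/(2L)^d$, and the union bound over the at most $9M^{2t}$ pairs still yields a bound of the form $CM^{4t}/L^d$. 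This extra bookkeeping is precisely what the paper's stronger ``spread-out'' event is designed to avoid.
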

\begin{proof}
Denote by $K_t$ the number of long range connections in $B_{\mathcal{B}}(0,t)$,
and by $J_t$ the total number of vertices in the ball centered at 0
in $\Lambda(L)$ and of radius $t$, which we denote by $B_{\mathcal{B}}(0,t)$. 
Note that the number of vertices in a graph with constant degree 
is always at most the number of vertices in the homogeneous tree
of the same degree. 
Recall that the ball of radius $t$ in the
homogeneous tree of degree $M\ge3$ has exactly $1+M\sum_{j=0}^{\lfloor t\rfloor-1}
(M-1)^j\le 3M^t$ vertices.
Thus we get $K_t\le 3M^t$ and $J_t\le 3M^t$.

Enumerate the long range connections in $B_{\mathcal{B}}(0,t)$ from 1 to $K_t$
and construct the mapping $\phi$. Note that $I(0,t)$
contains the set $A$ of $\omega$ such that the long range connections
in the image of $B_{\mathcal{B}}(0,t)$ in the small world $\mathcal S$ are all sites at
distance at least $2t$ on $\Lambda(L)$.
Thus $\mathbf P(I(0,t))\ge\mathbf P(A)$ and
\[\begin{split}
 \mathbf P(A) & \ge  \frac{(2L)^d-J_{2t}}{(2L)^d}\frac{(2L)^d-2J_{2t}}{(2L)^d}
\cdots \frac{(2L)^d-K_tJ_{2t}}{(2L)^d}\\
& =\prod_{i=1}^{K_t}\left(1-\frac{iJ_{2t}}{(2L)^d}\right)=
\exp\left(\sum_{i=1}^{K_t}\log\left(1-\frac{iJ_{2t}}{(2L)^d}\right)
\right).\end{split}
\]
Note that $\log(1-x)\ge-2x$ if $x\in[0,\bar x]$ for some $\bar x$.
By our choice of $t(L)$, for $L$ sufficiently large we have
that $K_tJ_{2t}/L^d\le \bar x$ and we get, for some positive $C$
and $C^\prime$,
\[
 \mathbf P(A)\ge \exp \left(-\frac{2J_{2t}}{(2L)^d}\sum_{i=1}^{K_t}i
\right)
\ge\exp\left(-C^\prime\frac{J_{2t}K_t^2}{L^d}
\right)\ge \exp\left(-\frac{CM^{4t}}{L^d}
\right)\ge 1-\frac{CM^{4t}}{L^d}.
  \]
\end{proof}

By $d_{\mathcal S}(x,y)$ we denote the (random) graph distance between $x$ and $y$.
Depending on $\omega$, $x$ and $y$, it may happen that
$d_{\mathcal S}(x,y)=d(x,y)$ or $d_{\mathcal S}(x,y)<d(x,y)$. The following
proposition provides probability estimates of these events.

\begin{pro}\label{th:stimedS}
Choose $t$ as in Proposition~\ref{th:xgood}. Then for sufficiently large $L$
 \begin{enumerate}[a.]
 \item if $d(0,x)\le t$, then
 %If $d(0,x)\le t \le (\log\log L)^2$, then
\begin{equation}
\mathbf{P}\left(d_{\mathcal S}(0,x)=d(0,x)\right) \ge1-\frac{CM^{4t}}{L^d};
\label{eq:stima>01}
\end{equation}
\item if $d(0,x)>t$, then
%If $d(0,x)>t\geq  \log\log L$, then
\begin{equation}
\mathbf{P}\left(d_{\mathcal S}(0,x)> t\right) \ge 1-\frac{CM^{4t}}{L^{d}}.
\label{eq:stima>02}
\end{equation}
\end{enumerate}
\end{pro}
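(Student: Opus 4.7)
My plan is to derive both estimates from Proposition \ref{th:xgood} by exploiting the fact that on $I(0,t)$ the map $\phi$ restricts to a bijection from $B_{\mathcal{B}}(0,t)$ onto $B_{\mathcal{S}}(0,t)$: any path from $0$ of length at most $t$ in $\mathcal{S}$ lifts uniquely to a path from $+(0)$ in $\mathcal{B}$, because $\phi$ sends short range edges to short range edges and long range edges to long range edges. Throughout I fix $x'\in\Z^d$ with $x'\equiv x\bmod 2L$ and $\|x'\|_1=d(0,x)$, so that $\phi(+(x'))=x$ and $|+(x')|=\|x'\|_1$; the latter equality relies on the free product structure of $\mathcal{B}$ as the Cayley graph of $\Z^d*\Z_2$, where no excursion through the $\Z_2$-generator can shorten the word length of a pure $\Z^d$-element.

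For part (a), suppose $d(0,x)\le t$, so $+(x')\in B_{\mathcal{B}}(0,t)$. The inequality $d_{\mathcal{S}}(0,x)\le d(0,x)$ is trivial since the short range edges of the torus belong to $\mathcal{S}$. For the reverse inequality I argue on $I(0,t)$: if $d_{\mathcal{S}}(0,x)=j<d(0,x)$, then lifting a geodesic of length $j$ from $0$ to $x$ yields $z_j\in B_{\mathcal{B}}(0,j)\subseteq B_{\mathcal{B}}(0,t)$ with $\phi(z_j)=x=\phi(+(x'))$; injectivity forces $z_j=+(x')$, contradicting $|+(x')|=d(0,x)>j$. Hence $I(0,t)\subseteq\{d_{\mathcal{S}}(0,x)=d(0,x)\}$ and \eqref{eq:stima>01} follows at once from Proposition \ref{th:xgood}.

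For part (b), let $d(0,x)>t$ and refine the event $A$ appearing in the proof of Proposition \ref{th:xgood}: let $A_x$ be the event that, while sequentially assigning the long range partners of the sites met during the construction of $\phi$ on $B_{\mathcal{B}}(0,t)$, each assigned partner lies in $\Lambda(L)$ at distance at least $2t$ from \emph{both} all previously assigned endpoints and from $x$. Replacing $iJ_{2t}$ by $(i+1)J_{2t}$ in the product estimate of Proposition \ref{th:xgood} yields $\mathbf{P}(A_x)\ge 1-CM^{4t}/L^d$, since the extra $K_tJ_{2t}/L^d$ term is dominated by $M^{4t}/L^d$. As $A_x\subseteq I(0,t)$, on $A_x$ we have $B_{\mathcal{S}}(0,t)=\phi(B_{\mathcal{B}}(0,t))$, and it remains to verify $x\notin\phi(B_{\mathcal{B}}(0,t))$. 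For $z=+(y)$ with $\|y\|_1\le t$ this is immediate because $\phi(z)=y\bmod 2L$ lies within Euclidean distance $t<d(0,x)$ of $0$. For any other $z$, the lifted path from $+(0)$ to $z$ uses at least one long range edge, and by $A_x$ the image under $\phi$ of the last long range endpoint visited before $z$ sits at distance $\ge 2t$ from $x$ in $\Lambda(L)$, so the at most $t$ remaining short range steps cannot reach $x$.

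The main technical step is the last part of (b): one has to follow an arbitrary $z\in B_{\mathcal{B}}(0,t)$ through the alternating short/long range structure of its path from $+(0)$ and check, using the $2t$-gap enforced by $A_x$ after every long range teleportation, that the bounded ($\le t$) total short range drift in $\Lambda(L)$ never closes the gap to $x$. The probability estimate for $A_x$ is itself a routine variant of the computation in Proposition \ref{th:xgood}.
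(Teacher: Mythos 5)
Your proof is correct. For part (a) it is essentially the paper's argument: the paper just observes that the event $A$ of Proposition~\ref{th:xgood} is contained in $\{d_{\mathcal S}(0,x)=d(0,x)\}$, while you spell out the lifting/injectivity argument that justifies this containment (in fact for the larger event $I(0,t)$); the bound is the same. For part (b) your decomposition is a genuine variant of the paper's. The paper introduces an event $C_x$ built from \emph{two} balls of radius $t/2$, one around $0$ and one around $x$, and requires all long range partners arising in either ball to be mapped far from each other and from both balls, so that the images of the two balls in $\mathcal S$ stay disjoint and no path of length $\le t$ can join $0$ to $x$. You instead grow a single ball of radius $t$ around $0$, require every assigned long range partner to avoid a $2t$-neighbourhood of $x$, and then verify directly that $x\notin\phi(B_{\mathcal B}(0,t))$, which suffices because every $\mathcal S$-path from $0$ of length $\le t$ lifts to a $\mathcal B$-path from $+(0)$ of the same length, so the $\mathcal S$-ball of radius $t$ around $0$ is contained in $\phi(B_{\mathcal B}(0,t))$. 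Both events are estimated by the same sequential product computation and yield the same bound $1-CM^{4t}/L^d$. Your one-ball version is asymmetric in $0$ and $x$ and reuses the lifting containment already needed in part (a), at the price of having to track the last long range endpoint along an arbitrary lifted path; the paper's two-ball version avoids that bookkeeping but needs the midpoint argument for a hypothetical short path between $0$ and $x$.
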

\begin{proof}
\begin{enumerate}[a.]
\item 
It suffices to note that the event $(d_{\mathcal S}(0,x)=d(0,x))$
contains the event $A$ of the previous proposition.
\item
We note that the event $(d_{\mathcal S}(0,x)>t)$
contains $C_x$ which is the event that
 all the $K_{t/2}$ long range connections in $B_{\mathcal{B}}(0,t/2)$ and
 $B_{\mathcal{B}}(x,t/2)$ 
are mapped by $\phi$ into vertices of $\Lambda(L)$ at distance at least $t$ from
each other and from the balls of radius $t$ centered at 0 and at $x$ in $\Lambda(L)$.
 We work as in Proposition~\ref{th:xgood} to estimate
\[\begin{split}
 \mathbf P(C_x) & \ge  \frac{(2L)^d-2J_{t}}{(2L)^d}\frac{(2L)^d-3J_{t}}{(2L)^d}
\cdots \frac{(2L)^d-K_{t/2}J_{t}}{(2L)^d}.\\
%& =\prod_{i=2}^{2K_{t/2}-1}\left(1-\frac{iJ_{t}}{(2L)^d}\right)=
%\exp\left(\sum_{i=2}^{2K_{t/2}-1}\log\left(1-\frac{iJ_{t}}{(2L)^d}\right)
%\right).
\end{split}
\]
and we proceed in a similar way to get the thesis.
\end{enumerate}
\end{proof}

%%%%%%%%%%%%%%%%%%%

\subsection{Isoperimetric constant}
\label{iso_constant}

Estimates of the distance between the random walk and the equilibrium
measure involve the isoperimetric constant. Thus we will get bounds
for the \textit{edge isoperimetric constant}
\begin{equation*}
 \iota=\min_{|V|\le n/2}\frac{e(V,V^c)}{|V|},
%\label{iota}
\end{equation*}
where $n$ is the total number of vertices in the graph and $e(V,V^c)$
is the total number of edges between vertices in
$V$ and $V^c$, where $V$ is a subset of the vertices of the graph.

Given $\alpha>0$, we define
\begin{equation}
Q^L_\alpha:=(S\in\mathcal S^L({\Omega}):\iota(S)>\alpha)
\label{QL}
\end{equation}
We want to prove that there exists $\alpha$ (independent of $L$)
such that $\mathbf{P}(Q^L_\alpha)$ is large when $L$ is large.
 In order to prove this
we need to recall some facts about random graphs.

Take $n$ and $r$ positive integers such that $nr$ is even and consider the
random multigraph with $n$ vertices obtained in the following way:
attach to each vertex $r$ half edges, pick at random
(with uniform probability $\Prob$) a pairing of the $nr$ half edges
and join the half edges which are paired. Note that parallel edges and loops
are possible and that the degree is at most $r$.
%: for instance in Figure~\ref{fig:config} we have $n=4$, $r=3$ and the
%pairing is 1a-2a, 1b-3b, 1c-4c, 2b-3a, 2c-3c, 4a-4b. 
We call this multigraph
a random $(n,r)$-configuration. This procedure is usually proposed as
a way to construct, with uniform probability, the random $r$-regular graph
(one has to condition to the event that the multigraph has neither parallel
edges nor loops, i.e. it is a graph), see 
\cite{cf:Bollobas} or \cite{Bollobas_book}.
%\begin{center}
% \begin{figure}[h]\label{fig:config}
% \includegraphics[height=3cm]{fig1bis.eps}
%\end{figure}
%\end{center}

Let us now recall  \cite[Theorem 6.3.2]{cf:Durrett} (which is inspired by 
\cite[Theorem 1]{cf:Bollobas}): it claims that, given $r$, there exists $\alpha^\prime>0$
such that $\Prob(\text{the }(n,r)\text{-configuration has }\iota\le\alpha^\prime)=o(1)$ as $n$
goes to infinity. 
% The following result is essentially Theorem 6.3.2 of \cite{cf:Durrett}: there
%it was stated that there is a lower bound for $\iota$ on the complement of a set whose probability is $o(1)$. 
It is not difficult to modify the proof of Durrett to get that for any fixed
positive integer $l$ one can refine the estimate and obtain 
$o(n^{-l})$.

\begin{pro}\label{th:isorrg}
Let $n$, $r$ and $l$ be positive integers with $nr$ even and let $\Prob$ be the uniform
probability on $(n,r)$-configurations. Then there exists $\alpha^\prime>0$
independent of $n$ and $r$ (one may choose $\alpha^\prime=1/10l$), such that
 $\Prob(\iota\le\alpha^\prime)=o(n^{-l})$. 
\end{pro}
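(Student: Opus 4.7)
The plan is to follow the standard first-moment argument for random regular graphs going back to Bollobás, but to tune the constants so that the tail is polynomially small of arbitrary order rather than merely $o(1)$.

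First, I would apply a union bound over all candidate witnesses of low expansion: writing the $(n,r)$-configuration probability as $\Prob$, we have
\[
\Prob(\iota\le\alpha') \;\le\; \sum_{k=1}^{\lfloor n/2\rfloor}\binom{n}{k}\sum_{j=0}^{\lfloor \alpha' k\rfloor} \Prob\bigl(e(V_k,V_k^c)=j\bigr),
\]
where $V_k$ is any fixed subset of $k$ vertices (all summands are independent of the choice of $V_k$ by symmetry of the uniform pairing). The inner probability is computed exactly by the configuration-model formula: pick $j$ of the $kr$ half-edges inside $V_k$ and $j$ of the $(n-k)r$ half-edges outside, match them in $j!$ ways, and pair up the remaining half-edges in each side, so
\[
\Prob\bigl(e(V_k,V_k^c)=j\bigr)=\frac{\binom{kr}{j}\binom{(n-k)r}{j}\, j!\, M(kr-j)\,M((n-k)r-j)}{M(nr)},
\]
with $M(2m)=(2m)!/(2^m m!)$ the number of perfect matchings on $2m$ points.

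Next I would apply Stirling's formula to this ratio and combine with $\binom{n}{k}\le(en/k)^k$. After routine algebra (collecting factors depending on $k/n$, $j/(kr)$, and $r$), each term of the double sum gets bounded above by $(\Psi(k/n,j/(kr)))^{kr/2}$ for an explicit continuous function $\Psi$ that satisfies $\Psi(x,0)<1$ for all $x\in(0,1/2]$ and is continuous at $y=0$; this is the computation carried out in Bollobás's original paper and reproduced in Durrett's Theorem~6.3.2. Since $j/(kr)\le\alpha'/r\le\alpha'$, choosing $\alpha'>0$ small compared to $1/l$ forces $\Psi(k/n,j/(kr))\le 1-c/l$ uniformly in $k\in[1,n/2]$, $j\le\alpha' k$, and $r\ge 1$, for some absolute $c>0$. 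This is where the choice $\alpha'=1/(10l)$ enters; a careful inspection of the estimate shows that this value is admissible.

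Finally I would split the outer sum at $k_0=C\log n$ for a suitable $C=C(l)$. For $k\ge k_0$, each term is bounded by $(1-c/l)^{kr/2}\le n^{-l-1}$ once $C$ is large enough, and there are at most $n\cdot\alpha' k$ terms, giving total contribution $o(n^{-l})$. For $k<k_0$, the factor $(1-c/l)^{kr/2}$ is not yet small, but the small-set regime is handled by observing that for $k\le k_0$ any set with $e(V,V^c)\le \alpha' k$ is extremely atypical: a direct estimate (the probability that a given vertex has all $r$ of its half-edges paired inside a fixed set of size $k_0$ is $\le (k_0 r/nr)^r=(k_0/n)^r$, and these events are essentially independent across the $k$ vertices) yields a bound $(k_0/n)^{\Omega(k)}\le n^{-\Omega(k)\delta}$ that, multiplied by $\binom{n}{k}$, is already $o(n^{-l})$ provided $\alpha'$ is small. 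Adding the two regimes completes the bound.

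The main obstacle is ensuring that the constant $\alpha'$ in Bollobás's argument can indeed be taken independent of $r$, and that the exponent in $\Psi$ degrades only linearly in $\alpha'$ so that the rule $\alpha'\asymp 1/l$ is enough; this requires carefully distinguishing the small-$k$ and large-$k$ regimes, because in the small-$k$ regime the binomial $\binom{n}{k}\sim (n/k)^k$ dominates and one cannot afford a constant loss per vertex. Once this is handled, the conclusion $\Prob(\iota\le\alpha')=o(n^{-l})$ follows by summing the two contributions.
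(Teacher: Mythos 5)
Your strategy---a union bound over candidate sets, the exact configuration--model formula for $\Prob(e(V_k,V_k^c)=j)$, Stirling, and a split between small and large $k$---is precisely the Bollob\'as/Durrett first-moment argument that the paper itself gestures at (the paper gives no proof beyond ``modify Durrett's Theorem 6.3.2''), and your treatment of the regime $k\ge C\log n$ is sound. The genuine gap is in the small-$k$ regime, which you rightly call ``the main obstacle'' but then dispose of with an estimate that is arithmetically false. For $k\le k_0$ and $\alpha'<1/k$ the only admissible value is $j=0$, i.e.\ all $kr$ half-edges of $V$ are paired internally. For a fixed $V$ this has probability $M(kr)M((n-k)r)/M(nr)=\Theta\bigl((k/n)^{kr/2}\bigr)$, and multiplying by $\binom{n}{k}=\Theta((n/k)^k\cdot(\text{const})^k)$ gives an expected number of such sets of order $n^{-k(r/2-1)}$, not the $o(n^{-l})$ you claim from ``$\binom{n}{k}(k_0/n)^{\Omega(k)}$'' (note $\binom{n}{k}(k_0/n)^{ck}$ does not even decay unless $c>1$). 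Concretely, for $r=3$ (the only case the paper actually uses, in Proposition~\ref{th:isosw}) and $k=2$, the expected number of pairs $\{u,v\}$ whose six half-edges are matched among themselves equals $\binom{n}{2}\cdot 15/\bigl((3n-1)(3n-3)(3n-5)\bigr)=\Theta(1/n)$, and a second-moment argument shows such an isolated pair exists with probability $\Theta(1/n)$. Any such pair forces $\iota=0\le\alpha'$ for every $\alpha'>0$, so $\Prob(\iota\le\alpha')\ge c/n$, which is not $o(n^{-l})$ for any $l\ge1$.

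The gap is therefore not repairable: the first-moment bound is tight in this range, and the proposition as stated is false for fixed $r$ (it fails already, and trivially, for $r=1,2$, where your claimed uniformity ``in $r\ge1$'' breaks---an imprecision the paper's statement shares). What your computation does prove is the correct substitute: for every $l$ there exist $\alpha'>0$ and $k_{\min}=k_{\min}(l,r)$ such that
\begin{equation*}
\Prob\Bigl(\exists V,\ k_{\min}\le|V|\le n/2,\ e(V,V^c)\le\alpha'|V|\Bigr)=o(n^{-l}),
\end{equation*}
and the unrestricted statement holds only when $k(r/2-1)>l$ already at the smallest feasible $k$, i.e.\ when $r$ is large compared with $l$. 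To use this inside Proposition~\ref{th:isosw} one must then treat the sets with $|J_A|<k_{\min}$ separately: there $|A|<6k_{\min}$ is bounded, and the deterministic short-range (torus) edges alone give $e(A,A^c)\ge c_d\ge (c_d/6k_{\min})|A|$, so the configuration model is not needed in that range. You should either prove the restricted version above or add the hypothesis $r>2l+2$; as written, the step ``multiplied by $\binom{n}{k}$, is already $o(n^{-l})$'' is where the proof fails.
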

%%% PROOF DA TENERE PERCENTATA
%% \begin{proof}
%% contiamo i modi di scegliere $U$ in modo che abbia $u$ vertici e $s$
%% edge che lo connettono al complementare. Seguiamo Durrett 6.3.2.
%% 
%% \textbf{Caso 1.}
%% $U$ contiene il vertice diverso.
%% Ho $\binom{n-1}{u-1}$ modi di scegliere $U$; $\binom{r(u-1)+h}{s}$ scelte dei mezzi
%% edge in $U$ e $\binom{r(n-u)}{s}$ scelte dei mezzi
%% edge in $U^c$; $s!$ modi di accoppiarli. Poi devo accoppiare i rimanenti
%% $r(u-1)+h-s$ edge in $U$ e $r(n-u)-s$ edge in $U^c$. La $P(u,s)$ \`e
%% maggiorata da
%% \[
%%  \binom{n-1}{u-1}\binom{r(u-1)+h}{s}\binom{r(n-u)}{s}s!
%% f(r(u-1)+h-s)f(r(n-u)-s)/f(r(n-1)+h)
%% \]
%% 
%% \textbf{Caso 2.}  $U^c$ contiene il vertice diverso.
%% Come sopra ottengo il bound
%% \[
%%  \binom{n-1}{u}\binom{ru}{s}\binom{r(n-u-1)+h}{s}s!
%% f(ru-s)f(r(n-u-1)+h-s)/f(r(n-1)+h)
%% \]
%% 
%% In entrambi i casi i binomiali si maggiorano con $\binom{n}{u}\binom{ru}{s}\binom{r(n-u)}{s}$
%% Ottengo la formula di Durrett $f(k-\alpha^\prime k)f(D-k-\alpha^\prime k)/f(D)$
%% nel caso 1 ponendo $D=rn-(r-h)$, 
%% $k=ru-(r-h)$ e $s=\alpha^\prime k$, nel caso 2 cambiando $k=ru$ (nota che Durrett ha come prima
%% $f$ il termine $f(D-\alpha^\prime k)$ che \`e un maggiorante e gli serve per usare Stirling).
%%  \end{proof}

Actually one can prove this proposition for more general random graphs.
Indeed let $h$ be a positive integer in $[1,r-1]$ and
call $(n,r,h)$-configuration the multigraph obtained by a procedure similar
to the one we used for $(n,r)$-configurations. The only difference is that here
$n$ vertices have $r$ half edges each attached, and one vertex has $h$ half edges
attached ($nr+h$ has to be even). It is not difficult to prove that 
Proposition \ref{th:isorrg} holds also for $(n,r,h)$-configurations.

Now we use this result to prove the analog for the small world.
The ideas are taken from  \cite[Theorem 6.3.4]{cf:Durrett}.

\begin{pro}\label{th:isosw}
 Consider the small world $\mathcal S^L$ and fix a positive integer $l$.
Then there exists $\alpha>0$ such that
 $\mathbf P(Q^L_\alpha
%\iota\le\alpha
)=o(L^{-dl})$. 
\end{pro}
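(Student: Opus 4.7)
The plan is to mimic the union-bound argument underlying Proposition \ref{th:isorrg} in the setting of the small world, exploiting the fact that the long range edges of $\mathcal S^L$ form a uniform random perfect matching on $n=(2L)^d$ vertices (essentially an $(n,1)$-configuration), while the short range edges are deterministic and the total degree at each vertex is at most $2d+1$.

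First, for each $V\subset \Lambda(L)$ with $|V|=k\le n/2$, I would decompose
\[
e(V,V^c)=e_{SR}(V,V^c)+e_{LR}(V,V^c)=e_{SR}(V,V^c)+k-2I_V,
\]
where $I_V$ is the number of long range matching edges having both endpoints in $V$. Since $e_{SR}(V,V^c)\ge 0$ is deterministic, the event $\{e(V,V^c)\le\alpha k\}$ forces $I_V\ge(1-\alpha)k/2$. I would then bound $\mathbf P(I_V\ge(1-\alpha)k/2)$ by constructing the matching sequentially: in each step one picks an unmatched vertex and pairs it uniformly with one of the remaining $n-2j+1$ vertices. This shows that $I_V$ is stochastically dominated by a sum of (negatively correlated) Bernoulli variables with success probabilities $\lesssim k/(n-k)$. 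A Chernoff-type tail estimate for matchings then yields, for $k\le n/2$,
\[
\mathbf P\bigl(I_V\ge (1-\alpha)k/2\bigr)\le \Bigl(\tfrac{Ce\,k}{(1-\alpha)\,n}\Bigr)^{(1-\alpha)k/2},
\]
with $C$ a universal constant.

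Next I would perform the union bound, using $\binom{n}{k}\le (en/k)^k$. Each term is then at most $(C'n/k)^k(k/n)^{(1-\alpha)k/2}=(C')^k(k/n)^{((1-\alpha)/2-1)k}$. For $\alpha$ chosen sufficiently small, depending on $l$ (analogously to the choice $\alpha'=1/(10l)$ in Proposition \ref{th:isorrg}), summing over $k$ yields a bound of order $o(n^{-l})=o(L^{-dl})$. Two regimes must be handled separately: for bounded $k\le k_0(d)$, the event $\{e(V,V^c)\le\alpha k\}$ is excluded outright by the minimum degree (every vertex has degree at least $2d$, so for $\alpha<1$, say, one sees $e(V,V^c)\ge(2d-2e_{SR}(V,V)/|V|)\,|V|$ is bounded below by a constant times $|V|$); for $k$ close to $n/2$ the symmetry $|V|\le n/2$ together with the exchangeability of the matching keeps the Chernoff estimate effective.

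The main obstacle lies in calibrating $\alpha$ against $l$ so that the exponential decay of the tail bound on $I_V$ overcomes the growth of $\binom{n}{k}$ across the full range $1\le k\le n/2$. This is where the strengthening from $o(1)$ to $o(n^{-l})$ in Proposition \ref{th:isorrg} is crucial, and an alternative route is to embed the matching-plus-torus into an $(n,2d+1,h)$-configuration and invoke the generalization of Proposition \ref{th:isorrg} mentioned after its statement, much in the spirit of \cite[Theorem 6.3.4]{cf:Durrett}.
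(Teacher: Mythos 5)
There is a genuine gap, and it is fatal to the route you chose. Your union bound does not close: with $\binom{n}{k}\le (en/k)^k$ and the tail estimate $\bigl(Cek/((1-\alpha)n)\bigr)^{(1-\alpha)k/2}$, the product is
\[
\Bigl[\,C'\,(n/k)^{1-(1-\alpha)/2}\,\Bigr]^k=\Bigl[\,C'\,(n/k)^{(1+\alpha)/2}\,\Bigr]^k,
\]
and since $n/k\ge 2$ and $C'>1$ this \emph{diverges} for every $k$ — the exponent $((1-\alpha)/2-1)$ in your expression $(k/n)^{((1-\alpha)/2-1)k}$ is negative, so that factor is large, not small. The problem is not one of calibrating $\alpha$ against $l$: even at $\alpha=0$ the most internal matching edges a set of size $k$ can have is $k/2$, so the best per-set probability the matching can ever give you is of order $(Ck/n)^{k/2}$, and $k/2<k$ means this can never beat the entropy $\binom{n}{k}\approx(n/k)^k$. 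The structural reason is that a uniform perfect matching is a $1$-regular graph and has \emph{no} expansion at all: any union of $k/2$ matching pairs has $e_{LR}(V,V^c)=0$, and there are $\binom{n/2}{k/2}$ such sets. So discarding $e_{SR}(V,V^c)$ and asking the long range edges alone to provide the isoperimetric bound is hopeless for every $\alpha>0$; one must exploit the interaction between the deterministic and the random edges. Your fallback — embedding into an $(n,2d+1,h)$-configuration — does not repair this either: in such a configuration \emph{all} $2d+1$ half-edges per vertex are randomly paired, whereas in the small world $2d$ of them are fixed torus edges (the $(n,r,h)$ variant in the paper only alters the degree of one exceptional vertex to fix parity, it does not mix deterministic and random edges).

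The paper's proof is built precisely to get around this obstruction. One partitions $\Lambda(L)$ into triplets of vertices that are mutually short-range connected and contracts each triplet to a super-vertex; the random matching then induces an $(n,3)$-configuration on the $n=\lfloor(2L)^d/3\rfloor$ triplets, and $3$-regular configurations \emph{are} expanders with failure probability $o(n^{-l})$ by Proposition \ref{th:isorrg} (this is where degree $3$, as opposed to degree $1$, is essential). Given $A$, if few triplets are wholly contained in $A$ then many triplets straddle the boundary and each contributes a short-range boundary edge (Case 1); if many triplets are wholly contained in $A$, the expansion of the contracted $3$-regular graph produces long-range boundary edges, which are mapped back to edges of $e(A,A^c)$ with multiplicity at most $2$ (Case 2). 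If you want to avoid the contraction trick, you would instead have to run your union bound only over sets $V$ with $e_{SR}(V,V^c)\le\alpha|V|$ and show that there are few enough of those geometrically clustered sets for the matching estimate to win — a genuinely different and more delicate counting argument than the one you propose.
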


\begin{proof}
We represent the vertices in $\Lambda(L)$ by vertices in $[-L,L)^d\cap\Z^d$
and partition this set into triplets plus possibly a singleton or a couple of
vertices (if $2L \mod 3=1$ or 2 respectively). We enumerate the triplets from
1 to $n=\lfloor (2L)^d/3\rfloor$ and denote them by
 $I_1,\ldots, I_n$.
Note that it is possible to choose the triplets in a way such that each triplet
has a vertex which is a short range neighbour of the other two vertices (see
Figure 2 for the case $d=2$ and $L=4$).
\begin{figure}[htb]\label{fig:triplets}
 \begin{center}
   \includegraphics[height=5cm]{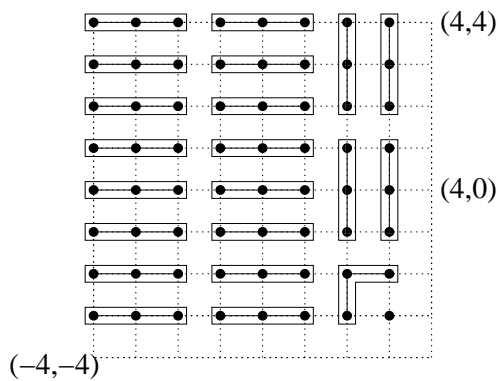}
 \end{center}
\caption{The partition in triplets if $d=2$ and $L=4$.}
\end{figure}

Now choose $A\subset [-L,L)^d\cap\Z^d$
with $|A|\le (2L)^d/2$: we need to  prove that outside
a set of small worlds of $\mathbf P$-probability which is
$o(L^{-dl})$ we have that $e(A,A^c)/|A|>\alpha$. Let 
$J_A=\{j\in\{\,\ldots,n\}: I_j\subset A\}$,
$K_A=\{j\not\in J_A: I_j\cap A\neq\emptyset\}$
and $B_A=\bigcup_{j\in J_A}I_j$.

\textbf{Case 1.} If $|J_A|\le |A|/6$ then $B_A|\le|A|/2$ and $|K_A|\ge |A|/4$.
Thus there are at least $|A|/4$ edges connecting $A$ to $A^c$ ($|K_A|$ is
a lower bound for $e(A,A^c)$).

\textbf{Case 2.} If $|J_A|> |A|/6$ we construct a $(n,3)$-configuration associated to
the small world: there is an edge between $j$ and $k$ for any long range 
edge between $x\in I_j$ and $y\in I_k$.
By Proposition \ref{th:isorrg} outside a set of $\mathbf{P}$-probability $o(L^{-dl})$
we have $e(J_A,J_A^c)\ge\alpha^\prime|J_A|$ for some $\alpha^\prime>0$.

It is enough to show that there is a map $\varphi$ from the set of edges between $J_A$
and $J_A^c$ to the set of edges between $A$ and $A^c$, such that each edge between
$A$ and $A^c$ has at most two preimages.
Let $j\in J_A$, $k\in J_A^c$ and let there be an edge between them. Then there
exists $x\in I_j\subset A$, $y\in I_k$ such that $x\sim^{LR} y$.
If $y\not\in A$ then $\varphi((j,k))=(x,y)$. If $y\in A$ and it has
a short range neighbour $z\in I_k\cap A^c$ then we choose $\varphi((j,k))=(y,z)$ and 
$(y,z)$ has no other preimages.

If $y$ does not have a short range neighbour in $I_k\cap A^c$
we know that it has a short range neighbour $z\in I_k\cap A$ which is a short
range neighbour of $z^\prime\in I_k\cap A^c$. % (see Figure 3).
 In this case, $\varphi((j,k))=(z,z^\prime)$ and $(z,z^\prime)$ might have at most another
preimage (the edge between some $i$ and $k$ originated by the long range edge
between $z$ and some $x^\prime\in I_i$.
\end{proof}

\subsection{Convergence to equilibrium}
\label{convergence}

Note by symmetry that the reversible distribution of the walk on
$\mathcal S^L$ is the uniform probability $\pi$ on $\mathcal S^L$.
\begin{pro}\label{th:conv-estim}
Let $\{X_t\}_{t\ge0}$ be the continuous time random walk on
the small world %, defined by the transition matrix $P_S=(1-\beta)\Delta+\beta A_S$ 
(recall Definition~\ref{def:law} and equation \eqref{Xcont}).
Fix $l\in\N$ and pick $\alpha$ as in Proposition~\ref{th:isosw}. 
There exists $\gamma>0$ (depending only on $\alpha$, $\Delta$
and $\beta$) such that 
\begin{align}
\max_{x,y}\left|\Pc_S^x(\Xc_t=y)-\pi(y)\right|&\le e^{-\gamma t},\qquad \text{for all }S\in Q^L_\alpha;
\label{eq:estim_cont1}\\
\max_{x,y}\left|\Pc^x(\Xc_t=y)-\pi(y)\right|&\le e^{-\gamma t}+o(L^{-dl}).
\label{eq:estim_cont2}
 \end{align} 
\end{pro}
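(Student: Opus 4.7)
The natural strategy is to bound the spectral gap of $P_S$ from below using the isoperimetric hypothesis, and then to convert this into an $\ell^\infty$ bound on $\Pc_S^x(\Xc_t=y)-\pi(y)$ via the spectral decomposition of the reversible chain; the annealed statement \eqref{eq:estim_cont2} will then follow from \eqref{eq:estim_cont1} by averaging over $S$ and invoking Proposition \ref{th:isosw} to control the bad set.

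For \eqref{eq:estim_cont1}, fix $S\in Q^L_\alpha$. Since $\Delta$ and $A_S$ are symmetric and translation invariant, $P_S$ is symmetric and hence reversible with respect to the uniform measure $\pi$ on $\Lambda(L)$. Every edge of $S$ carries $P_S$-transition at least $c(\beta):=\min(\beta\Delta_{\min},1-\beta)>0$, where $\Delta_{\min}$ is the smallest non-zero entry of $\Delta$. Therefore, for any $V\subset\Lambda(L)$ with $|V|\leq(2L)^d/2$, the conductance satisfies
\[
\Phi(V)=\frac{\sum_{x\in V,\,y\in V^c}P_S(x,y)}{|V|}\ge c(\beta)\frac{e(V,V^c)}{|V|}\ge c(\beta)\,\iota(S)>c(\beta)\alpha.
\]
The Cheeger inequality for reversible chains then yields $1-\lambda_2(P_S)\ge c(\beta)^2\alpha^2/2=:\gamma$, with $\gamma$ depending only on $\alpha$, $\Delta$ and $\beta$. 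Expanding the continuous-time kernel in the $\pi$-orthonormal eigenbasis $\{f_i\}$ of $P_S$ (with $f_1\equiv 1$, $\lambda_1=1$), one writes
\[
\Pc_S^x(\Xc_t=y)=\pi(y)+\pi(y)\sum_{i\ge 2}e^{-t(1-\lambda_i)}f_i(x)f_i(y),
\]
so by Cauchy--Schwarz and the completeness identity $\sum_{i\ge 2}f_i(z)^2\le 1/\pi(z)$,
\[
|\Pc_S^x(\Xc_t=y)-\pi(y)|\le\pi(y)\frac{e^{-\gamma t}}{\sqrt{\pi(x)\pi(y)}}=e^{-\gamma t}\sqrt{\pi(y)/\pi(x)}=e^{-\gamma t},
\]
where the last equality uses that $\pi$ is uniform; this is \eqref{eq:estim_cont1}.

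For \eqref{eq:estim_cont2} I would decompose
\[
\Pc^x(\Xc_t=y)-\pi(y)=\sum_{S\in Q^L_\alpha}\mathbf P(S)\bigl(\Pc_S^x(\Xc_t=y)-\pi(y)\bigr)+\sum_{S\notin Q^L_\alpha}\mathbf P(S)\bigl(\Pc_S^x(\Xc_t=y)-\pi(y)\bigr),
\]
bound the first sum by $e^{-\gamma t}$ via \eqref{eq:estim_cont1} and the second by the trivial estimate $|\Pc_S^x(\Xc_t=y)-\pi(y)|\le 1$; Proposition \ref{th:isosw} then gives $\mathbf P(S\notin Q^L_\alpha)=o(L^{-dl})$, which yields \eqref{eq:estim_cont2}. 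The only delicate point is the quenched spectral gap, and what makes the final bound clean is that the stationary measure is uniform: this is exactly what is needed to absorb the $1/\sqrt{\pi(x)\pi(y)}$ prefactor coming out of the spectral decomposition, so that one obtains a genuine $e^{-\gamma t}$ estimate with no polynomial-in-$L$ constant in front; otherwise one would be forced to run an additional burn-in argument to kill a factor $(2L)^{d/2}$.
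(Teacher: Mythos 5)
Your proof is correct, and for the quenched bound \eqref{eq:estim_cont1} it takes a genuinely different route from the paper's. The paper works in discrete time: it bounds $\max_{x,y}|p^{(t)}(x,y)-\pi(y)|$ by $\lambda^{t}$ with $\lambda=\max_{i\ge1}|\lambda_i|$, and since Cheeger's inequality only controls $1-\lambda_1$ it must exclude eigenvalues near $-1$; this forces a detour through a lazy version of the walk and a coupling argument, after which the estimate is transferred to continuous time by Poissonization (replacing $c\alpha^2$ by $1-e^{-c\alpha^2}$). You instead expand the Poissonized kernel directly in the $\pi$-orthonormal eigenbasis, where every nontrivial mode decays like $e^{-t(1-\lambda_i)}\le e^{-t(1-\lambda_2)}$ \emph{regardless of the sign of} $\lambda_i$; combined with Cauchy--Schwarz, the completeness identity $\sum_i f_i(z)^2=1/\pi(z)$, and the uniformity of $\pi$, this yields $e^{-\gamma t}$ with no laziness, no coupling, and no separate Poissonization step. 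Your closing observation is exactly the right one: uniformity of $\pi$ is what absorbs the $1/\sqrt{\pi(x)\pi(y)}$ prefactor, so no $(2L)^{d/2}$ constant survives. The annealed step \eqref{eq:estim_cont2} is the same averaging over $Q^L_\alpha$ and its complement as in the paper, using Proposition~\ref{th:isosw}. One shared caveat, not a gap relative to the paper: your claim that every edge of $S$ carries $P_S$-probability at least $c(\beta)$ (and likewise the paper's application of Durrett's Cheeger inequality with $\min_{x,y:p(x,y)>0}p(x,y)$ against the isoperimetric constant of the full graph $S$) implicitly assumes that every short-range edge counted by $\iota(S)$ has $\Delta(x,y)>0$; since $\Delta$ is only assumed adapted, this is a tacit hypothesis on $\Delta$ present in both arguments and worth flagging.
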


\begin{proof}
Recall that given a discrete time random walk on a finite set,
with transition matrix $P$ and reversible measure the uniform measure
$\pi$, a result of Sinclair and Jerrum \cite{cf:SinJer89} gives an estimate of the speed of
convergence to equilibrium. Indeed in this case
$P$ has all real eigenvalues, namely $1=\lambda_0>\lambda_1\ge\cdots\ge
\lambda_{n-1}$. Let $\lambda=\max\{|\lambda_i|:i=1,\ldots,n-1\}$. It is well known that
$\lambda<1$.
Then for all $t\in\N_0:=\{n \in \mathbb{Z}: n\geq 0\}$
\[
 \max_{x,y}\left|p^{(t)}(x,y)-\pi(y)\right|\le
{\lambda^{t}}
\le {\exp(-(1-\lambda)t)},
\]
where $p^{(t)}(x,y)$ is a $t$-step probability of the walk.
If we are able to estimate $\lambda$ we are done. 
If $\lambda=\lambda_1$
then the following 
(which is known as Cheeger's inequality (see \cite[Theorem 6.2.1]{cf:Durrett}) is useful
\begin{equation*}
 \frac12\iota^2\left(\min_{x,y:p(x,y)>0}p(x,y)\right)^2\le1-\lambda_1.%\le2\iota.
%\label{Cheeger}
\end{equation*}
A sufficient condition for $\lambda=\lambda_1$ is that all the eigenvalues are positive, 
which for instance holds when we consider a lazy random walk, that is one which stays put 
with probability at least 1/2. 
It is thus clear that for any small world $S$ in $Q_\alpha^L$, %such that $\iota(S)>\alpha$, 
if the random walk $\Xd_t$ %on $S$ with symmetric transition matrix $P_S$ 
is such that
 $\lambda=\lambda_1$, then
\begin{equation}\label{eq:q-estim-conveq}
 \max_{x,y}\left|\Pd_S^x(\Xd_t=y)-\pi(y)\right|\le
 {\exp(-c\alpha^2t)},
\end{equation}
where $c=\frac12\left(\min_{x,y:p(x,y)>0}p(x,y)\right)^2$ depends only on 
%$\displaystyle \min_{x,y:p_S(x,y)>0}p_S(x,y)$. 
$\Delta$ and is strictly positive (recall that $\Delta$ is adapted and translation
invariant on $\Lambda(L)$).
Moreover by Proposition \ref{th:isorrg} %with $\alpha_l=1/10l$
\begin{align}
  \max_{x,y}\left|\Pd^x(\Xd_t=y)-\pi(y)\right|&\le\sum_S
  \mathbf P(S)\max_{x,y}\left|\Pd_S^x(\Xd_t=y)-\pi(y)\right|\nonumber \\
&\le \exp(-c\alpha^2t)\mathbf P(Q_\alpha^L%\iota>1/10l
)+2\mathbf P(Q_\alpha^L%\iota\le 1/10l
)\nonumber \\
&\le\exp(-c\alpha^2%(1/10l)^2
t)+o(L^{-dl})
\label{eq:q-estim-conveq2}.
 \end{align}
It is easy starting from \eqref{Xcont} to prove that \eqref{eq:q-estim-conveq} and 
\eqref{eq:q-estim-conveq2} still hold in continuous time with a different 
constant in the exponential. Namely, one has to replace 
$c\alpha^2$ with $1-\exp(-c\alpha^2)$.

We are left with the proof that \eqref{eq:q-estim-conveq} and \eqref{eq:q-estim-conveq2}
 hold for any random walk (not just for the lazy one) with different constants.
 This can be proven by coupling $\Xd$ with
a random walk $\Yd=\{\Yd_t\}_{t\ge0}$ with transition matrix $P^\prime$ such that 
$p^\prime(x,x)=(1+p(x,x))/2$, $p^\prime(x,y)=p(x,y)/2$: the process $\Yd$ is ``lazy''
and moves with $\Xd$ when 
a Bernoulli random variable with parameter $1/2$ equals 1, otherwise it stays put.
We leave the computation to the reader.
\end{proof}

\section{Laplace transform estimates}
\label{hitting_time}

%%Time here is continuous: we simply define
%%\[X_{t}=X_{\lfloor t\rfloor},\]
%where $\lfloor t\rfloor$ is the integer part of $t$. 
%\textbf{\color{red}Tolta la scelta di passare al tempo continuo: la formula
%che permette di scrivere G con la F non sarebbe corretta nel tempo continuo!!!
%prova e vedrai che dipende dal fatto che $\lfloor q+s\rfloor\neq \lfloor q+s\rfloor+
%\lfloor q+s\rfloor$}
Let $\Tc_L=\inf\{s>0:\Xc_s=\Yc_s\}$ (respectively $\Td_L$) be the first time, after time 0, that two independent continuous
 (respectively discrete) time random walks $\Xc_t$ and $\Yc_t$ on the random graph $\mathcal{S}$ meet.
Clearly the law of $\Tc_L$ (with respect to either $\Pc_S$ or $\Pc$) 
depends on the starting sites of the walkers.
Without loss of generality, we assume that $\Yc_0=0$ and $\Xc_0=x$
(if we need to stress the dependence on $L$, we write $\Xc_0=x_L$).\\
%Let $\Tc_L=\inf\{s>0:\Xc_s=\Yc_s\}$ be the continuous time version of $\Td_{L}$, 
We introduce the following (annealed) Laplace transforms in continuous time,
\[\begin{split}
\Gc^L(x,\lambda)&:=\int_{0}^{\infty}e^{-\lambda t}\Pc^{x,0}(\Xc_{t}=\Yc_{t})dt=
\int_{0}^{\infty}e^{-\lambda t}\Pc^{x}(\Xc_{2t}=0)dt,\\
\Fc^L(x,\lambda)&:=\int_{0}^{\infty}e^{-\lambda t}\Pc^{x,0}(\Tc_L\in dt)
,
 \end{split}
\] 
where $\Pc^{x,0}$ denotes the product law of the two walkers. The corresponding quenched transforms are, given $S\in \mathcal S^L(\Omega)$,
\begin{align*}
%\Gd^S_L(x,\lambda)&:=\sum_{t=0}^{\infty}e^{-\lambda t}\Pd_S^{x,0}(\Xd_{t}=\Yd_{t}),&
\Gc^L_S(x,\lambda):=\int_{0}^{\infty}e^{-\lambda t}\Pc_S^{x,0}(\Xc_{t}=\Yc_{t})dt,\qquad 
%\Fd^S_L(x,\lambda)&:=\sum_{t=0}^{\infty}e^{-\lambda t}\Pd_S^{x,0}(\Td_L=t),& 
\Fc^L_S(x,\lambda):=\int_{0}^{\infty}e^{-\lambda t}\Pc_S^{x,0}(\Tc_L\in dt).
 \end{align*}
We are interested in the asymptotic behaviour, as $L\to\infty$, of $T_L/(2L)^d$,
thus we study the previous Laplace transforms with parameter $\lambda/(2L)^d$.\\
The discrete time version of such Laplace transforms are defined in a similar way, 
but the integrals are replaced by  sums. With a slight abuse of notation we omit the 
superscript $\sim$ on the discrete time random walk when not necessary and we use 
$X_{t}$, $Y_{t}$, $T_{L}$, $\Prob^{\mu}_{S}$, $G^L_S(x,\lambda)$ and $F^L_S(x,\lambda)$ 
both in discrete and continuos time version of the process: since the proofs are similar, 
we detail the latter one and we only point out the differences. 
%the the Laplace transforms
%\[
% G_L\left(x,\frac{\lambda}{(2L)^d}\right),\  F_L\left(x,\frac{\lambda}{(2L)^d}\right),\ 
% G^L_S\left(x,\frac{\lambda}{(2L)^d}\right),\  F^L_S\left(x,\frac{\lambda}{(2L)^d}\right).
%\]
%We consider the following
%(annealed) Laplace transforms in discrete time, defined for $\lambda>0$:
%\[\begin{split}
%\Gd_L(x,\lambda)&:=\sum_{t=0}^{\infty}e^{-\lambda t}\Pd^{x,0}(\Xd_{t}=\Yd_{t})=
%\sum_{t=0}^{\infty}e^{-\lambda t}\Pd^{x}(\Xd_{2t}=0),\\
%\Fd_L(x,\lambda)&:=\sum_{t=0}^{\infty}e^{-\lambda t}\Pd^{x,0}(\Td_L=t)%dt
%=\sum_{t=1}^{\infty}e^{-\lambda t}\Pd^{x,0}(\Td_L=t)%dt
%,
% \end{split}
%\]
\subsection{Estimates for G}
\label{sec:Gcont}

We first note that the evaluation of the limit of the annealed transforms can
be done considering only small worlds with large isoperimetric constants,
that is on $Q^L_\alpha$ (which was defined by \eqref{QL}).
Let $\K:=\{K\subset \mathbb{R}: \inf K >0\}$.
\begin{lem}\label{th:qisocont} 
Let
\[\begin{split}
   g_{L}&:=\sum_{S \in (Q_{\alpha}^L)^c}\mathbf P({S})
	\int_{0}^{\infty}e^{- \frac{\lambda t}{(2L)^{d}}}
	\mathbb{P}_{S}^{x}(X_{2t}=0)dt,\\
  f_L&:=\sum_{S \in (Q_{\alpha}^L)^c}\mathbf P({S})
	\int_{0}^{\infty}e^{- \frac{\lambda t}{(2L)^{d}}}
	\mathbb{P}_{S}^{x,0}(T_L\in dt).
  \end{split}
\]
There exists $\alpha>0$ such that $\mathbf{P}(Q_{\alpha}^L)\stackrel{L\to\infty}{\to}1$,
$g_L\stackrel{L\to\infty}{\to}0$ and $f_L\stackrel{L\to\infty}{\to}0$
(for each $K \in \K$, uniformly for $\lambda \in K$).
\end{lem}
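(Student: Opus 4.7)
The plan is to choose $\alpha$ via Proposition~\ref{th:isosw} and then bound the two inner integrals in the definitions of $g_L$ and $f_L$ by crude pointwise-in-$t$ estimates, exploiting the fact that the $\mathbf P$-probability of $(Q_\alpha^L)^c$ can be made to decay polynomially fast in $L$ with arbitrarily high exponent. Specifically, I would fix a positive integer $l\geq 2$ and apply Proposition~\ref{th:isosw} to obtain $\alpha>0$ (independent of $L$) with $\mathbf P((Q_\alpha^L)^c)=o(L^{-dl})$; in particular $\mathbf P(Q_\alpha^L)\to 1$, which is the first assertion of the lemma.

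For $f_L$, the key observation is that $\int_0^\infty e^{-\lambda t/(2L)^d}\,\mathbb P_S^{x,0}(T_L\in dt)\leq 1$, since $e^{-\lambda t/(2L)^d}\leq 1$ and $\mathbb P_S^{x,0}(T_L\in\cdot)$ is a (sub)probability measure. Hence $f_L\leq \mathbf P((Q_\alpha^L)^c)\to 0$, and since this bound is independent of $\lambda$, uniformity on any $K\in\K$ is automatic.

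For $g_L$ the integrand $\mathbb P_S^{x}(X_{2t}=0)$ is not a density in $t$, but it is trivially $\leq 1$, so I would bound the inner integral by $\int_0^\infty e^{-\lambda t/(2L)^d}\,dt=(2L)^d/\lambda$. This gives $g_L\leq (2L)^d\,\mathbf P((Q_\alpha^L)^c)/\lambda$; with the choice $l\geq 2$ the right-hand side is $o(L^{-d})/\lambda$, which tends to $0$ uniformly for $\lambda$ in any $K\in\K$ because $\lambda\geq \inf K>0$.

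There is no real obstacle here: the only slight subtlety is that the integrand in $g_L$ is not a probability density, which forces the factor $(2L)^d/\lambda$ and therefore a choice of $l\geq 2$ in Proposition~\ref{th:isosw}. One could instead sharpen the pointwise bound on $\mathbb P_S^x(X_{2t}=0)$ using convergence to the uniform distribution (as in Proposition~\ref{th:conv-estim}), but no such refinement is needed since Proposition~\ref{th:isosw} already delivers arbitrarily fast polynomial decay of $\mathbf P((Q_\alpha^L)^c)$.
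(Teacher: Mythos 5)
Your proposal is correct and follows essentially the same route as the paper: choose $\alpha$ via Proposition~\ref{th:isosw} with $l=2$ so that $\mathbf P((Q_\alpha^L)^c)=o(L^{-2d})$, bound the inner integral of $g_L$ crudely by $\int_0^\infty e^{-\lambda t/(2L)^d}dt=(2L)^d/\lambda$, and conclude using $\inf K>0$. The only (harmless) difference is that the paper bounds $f_L\le g_L$ and handles both with the single factor $(2L)^d/\lambda$, whereas you use the sharper bound $f_L\le\mathbf P((Q_\alpha^L)^c)$ directly.
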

\begin{proof}
%We choose $\alpha=1/20$: 
By Proposition~\ref{th:isosw} we may choose $\alpha$ such that
$\mathbf P((Q^L_\alpha)^c)=o(L^{-2d})$. Then
\[
0\le f_L\le g_L\le\mathbf P((Q_{\alpha}^L)^c)\int_{0}^\infty e^{- \frac{\lambda t}{(2L)^{d}}}dt
=\mathbf P((Q_{\alpha}^L)^c)\frac{(2L)^d}{\lambda}
\stackrel{L\to\infty}{\to}0.
\]
\end{proof}
The limit of the sum defining $G$, from $\log \log L$ %($\lfloor\log\log L\rfloor$ in discrete time) 
to infinity does not depend
on the sequence of small worlds, provided that they are chosen with large isoperimetric
constant. From now on, if not otherwise stated, we write $t_L=\log\log L$, fix $\alpha$ 
such that $\mathbf P((Q^L_\alpha)^c)=o(L^{-2d})$ %as in Lemma~\ref{th:qisocont}
and write $Q^L$ instead of $Q^L_\alpha$. %=(S\in\mathcal{S}^L:\iota(S)>1/20)$.
\begin{lem}\label{th:tlinfcont}
If for all $L$ we choose $S \in Q^L$ 
and $x_L\in\Lambda(L)$, then for all $\lambda>0$
$$\lim_{L \to \infty}\int_{t_L}^{\infty}
e^{-\frac{\lambda t}{(2L)^{d}}}\mathbb{P}^{x_L}_{S}(X_{2t}=0)dt=\frac{1}{\lambda}.
$$ 
Moreover, the convergence is uniform with respect to the choice of the sequences
$S \in Q^L$, $x_L\in\Lambda(L)$ (and of $\lambda$).
\end{lem}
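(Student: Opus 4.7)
The plan is to exploit the convergence to equilibrium estimate from Proposition~\ref{th:conv-estim}. Since $S \in Q^L_\alpha$, we have a uniform exponential bound
\[
\left|\Pc_S^{x_L}(X_{2t}=0)-\pi(0)\right|\le e^{-2\gamma t}, \qquad \pi(0)=\frac{1}{(2L)^d},
\]
valid for every $S\in Q^L_\alpha$ and every starting point $x_L$, with $\gamma>0$ independent of $L$, $S$, and $x_L$. This is the key reason the tail of the integral is essentially insensitive to the specific small world and to the starting point.

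The next step is to split
\[
\int_{t_L}^{\infty} e^{-\lambda t/(2L)^d}\,\Pc_S^{x_L}(X_{2t}=0)\,dt
= \int_{t_L}^{\infty} e^{-\lambda t/(2L)^d}\,\pi(0)\,dt + R_L,
\]
where, by the uniform bound above,
\[
|R_L|\le \int_{t_L}^{\infty} e^{-\lambda t/(2L)^d}\,e^{-2\gamma t}\,dt \le \int_{t_L}^\infty e^{-2\gamma t}\,dt = \frac{e^{-2\gamma t_L}}{2\gamma}.
\]
Since $t_L=\log\log L\to\infty$, the remainder $R_L$ tends to $0$ uniformly in $S\in Q^L$, in $x_L$, and in $\lambda$ (the bound does not even depend on $\lambda$).

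For the main term, a direct computation gives
\[
\int_{t_L}^{\infty} e^{-\lambda t/(2L)^d}\,\frac{1}{(2L)^d}\,dt
= \frac{1}{\lambda}\,\exp\!\Bigl(-\frac{\lambda t_L}{(2L)^d}\Bigr).
\]
Since $t_L/(2L)^d\to 0$, this converges to $1/\lambda$; for $\lambda$ in any compact set $K\in\K$, the convergence is uniform because $\lambda t_L/(2L)^d$ tends to $0$ uniformly on $K$ and $1/\lambda$ is bounded on $K$.

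I do not expect any serious obstacle: the only point requiring a little care is the interplay of the three uniformities (in $S$, in $x_L$, in $\lambda$), but the remainder bound decouples these issues since the estimate $e^{-2\gamma t}$ from Proposition~\ref{th:conv-estim} is entirely independent of $S\in Q^L_\alpha$ and of $x_L$, and the leading term does not depend on $S$ or $x_L$ at all.
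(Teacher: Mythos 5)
Your proof is correct and follows essentially the same route as the paper: split off the uniform-measure term $\pi(0)=1/(2L)^d$ and control the remainder via the convergence-to-equilibrium estimate \eqref{eq:estim_cont1}, which is uniform over $S\in Q^L$ and over the starting point. The only minor refinement is that you need not restrict $\lambda$ to compact sets for the main term: since $\bigl|\tfrac{1}{\lambda}-\tfrac{1}{\lambda}e^{-\lambda t_L/(2L)^d}\bigr|\le t_L/(2L)^d$ by $1-e^{-x}\le x$, the convergence to $1/\lambda$ is uniform over all $\lambda>0$, as the lemma asserts.
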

\begin{proof}
Note that
\begin{multline}\label{eq:G30}
%\begin{split}
%&
\int_{t_{L}}^{\infty}e^{-\frac{\lambda t}{(2L)^{d}}}\mathbb{P}_{S}^{x_L}(X_{2t}=0)dt\\
%&
=\int_{t_{L}}^{\infty}e^{-\frac{\lambda t}{(2L)^{d}}}\frac{1}{(2L)^{d}}dt+
\int_{t_{L}}^{\infty}e^{-\frac{\lambda t}{(2L)^{d}}}\left(\mathbb{P}_{S}^{x_L}(X_{2t}=0)-
\frac{1}{(2L)^{d}}\right)dt.
%\end{split}
\end{multline}
The limit of the first term is uniform in $\lambda$ and it is $1/\lambda$.
%\begin{equation}
%\lim_{L \to \infty}\frac{1}{\lambda}e^{-\lambda t_{L}/(2L)^{d}}(1+o(1))=\frac{1}{\lambda}.
%\end{equation}
Since $S$ is chosen in $Q^L$, by \eqref{eq:estim_cont1} there exists a positive
constant $\gamma$ %=1-\exp(-c\alpha^2)>0$ 
(recall that $\gamma$ depends on $\alpha$ which is now fixed)
such that the second sum on the right hand side of \eqref{eq:G30} is smaller or equal to
\[\begin{split}
%\int_{t_L}^\infty e^{-\frac{\lambda t}{(2L)^d}}
%\left|\mathbb P_S^x(X_{2t}=0)-\frac1{(2L)^d}\right|dt
\int_{t_{L}}^{\infty}e^{-\frac{\lambda t}{(2L)^d}}e^{-\gamma 2t}dt=
\frac{e^{-(\lambda/(2L)^d+2\gamma)t_L}}{\lambda/(2L)^d+2\gamma},
\end{split}
\]
which tends to 0 as $L$ goes to infinity (uniformly with respect to all the choices of the statement).
\end{proof}
Recall that, given a vertex $x\in\Lambda(L)$, there is a unique
vertex $+(x)$ in the big world (if $x=0$ we write 0 instead of $+(0)$)
and that by $I(0,t)$ we denote the set of small worlds which look
like the big world in a ball of radius $t$ around 0 (see Definition~\ref{def:I(x,t)}).
We now prove that, if $L$ is sufficiently large,
for a wide choice of $S$ (i.e. $S$ in a set with $\mathbf P$-probability which tends to 1 
as $L$ increases to infinity),
we have that 
 $G^L_S(x_L,\lambda/(2L)^d)$ is close to $1/\lambda +\Gp_{\mathcal{B}}(x_L)$.
\begin{theorem}\label{th:Gquenchedcont}
Let
\[
h^L_S(\lambda)=\left|G^L_S(x_L,\lambda/(2L)^d) 
- \frac1\lambda - \Gp_{\mathcal{B}}(x_L)\right|.
\]
For all $\eps>0$ there exists $\widetilde L$ such that for all $\lambda$, $x_L$ and $L\ge\widetilde L$
we have that $Q^L\cap I(0,t^2_L)\subset (S:h^L_S(\lambda)\le\eps)$. %\\
If $d_S(0,x_L)>t^2_L$ then $Q^L\subset (S:h^L_S(\lambda)\le\eps)$.
\end{theorem}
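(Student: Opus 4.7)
The plan is to split the Laplace transform at the threshold $t_L=\log\log L$ introduced earlier, writing
\[
\Gc^L_S\!\left(x_L,\lambda/(2L)^d\right) = \int_0^{t_L} e^{-\lambda t/(2L)^d}\Prob_S^{x_L}(\Xc_{2t}=0)\,dt + \int_{t_L}^\infty e^{-\lambda t/(2L)^d}\Prob_S^{x_L}(\Xc_{2t}=0)\,dt.
\]
The tail is already handled by Lemma~\ref{th:tlinfcont}: for $S\in Q^L$ it tends to $1/\lambda$ uniformly in $x_L$ and in $\lambda$ from compact subsets of $(0,\infty)$. On $[0,t_L]$ we have $e^{-\lambda t/(2L)^d}\to 1$ uniformly, so the task reduces to showing that $\int_0^{t_L}\Prob_S^{x_L}(\Xc_{2t}=0)\,dt$ is within $\eps$ of $\Gp_{\mathcal{B}}(x_L)$ in each of the two scenarios.

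For the first scenario $S\in Q^L\cap I(0,t_L^2)$, I would couple the continuous time walk $\Xc^S$ on $S$ started at $0$ with the continuous time walk $\Xc^\mathcal{B}$ on $\mathcal{B}$ started at $+(0)=0$, using a common rate-one Poisson clock and jump choices compatible with $\phi$ (which sends short range edges to short range and long range to long range by construction). On the event $\{N_{2t}\le t_L^2\}$ the big world walk cannot leave $B_\mathcal{B}(0,t_L^2)$, and on that ball $\phi$ is injective by $I(0,t_L^2)$, hence $\Xc^S_s=\phi(\Xc^\mathcal{B}_s)$ for $s\le 2t$ and in particular $\Xc^S_{2t}=x_L$ iff $\Xc^\mathcal{B}_{2t}=+(x_L)$. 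Using reversibility to swap endpoints,
\[
\bigl|\Prob_S^{x_L}(\Xc_{2t}=0)-\Prob_\mathcal{B}^{+(x_L)}(\Xc_{2t}=0)\bigr| \le \Prob\!\left(N_{2t}>t_L^2\right) \le e^{-c t_L^2}
\]
for $t\le t_L$ by a Chernoff bound, giving an integrated error $\le t_L e^{-c t_L^2}\to 0$ uniformly in $x_L$. The remaining tail $\int_{t_L}^\infty \Prob_\mathcal{B}^{+(x_L)}(\Xc_{2t}=0)\,dt$ is dominated via \eqref{eq:Pbwubcont} by $\int_{t_L}^\infty \Prob_\mathcal{B}^{0}(\Xc_{2t}=0)\,dt$, which vanishes because $\Gp_{\mathcal{B}}(0)<\infty$ by transience of the random walk on $\mathcal{B}$.

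For the second scenario $S\in Q^L$ with $d_S(0,x_L)>t_L^2$, both $\int_0^{t_L}\Prob_S^{x_L}(\Xc_{2t}=0)\,dt$ and $\Gp_\mathcal{B}(x_L)$ are shown to vanish separately. For $\Xc^S$ to travel from $x_L$ to $0$ in time $2t\le 2t_L$ it must make at least $d_S(0,x_L)>t_L^2$ jumps, an event of probability $\le e^{-c t_L^2}$, so the integral goes to $0$. On the other side, since any big-world path projects through $\phi$ to a small-world path of the same length, $|+(x_L)|\ge d_S(0,x_L)>t_L^2$; by transience of $\mathcal{B}$ the hitting probability $\Prob_\mathcal{B}^{+(x_L)}(\text{hit }0)\to 0$ as $|+(x_L)|\to\infty$, and strong Markov applied at the first visit to $0$ yields $\Gp_\mathcal{B}(x_L)\to 0$.

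The main obstacle is the coupling in the first scenario: one has to verify that the transition rules on $S$ (choose short range with probability $1-\beta$ according to $\Delta$, long range with probability $\beta$) and on $\mathcal{B}$ can be driven by a single random input in a way compatible with $\phi$, and that the injectivity of $\phi$ on $B_\mathcal{B}(0,t_L^2)$ then forces exact agreement of the two positions (up to $\phi$) up to the first exit. Once this is set, the Chernoff tail on the number of jumps and the transience of $\mathcal{B}$ are the only remaining ingredients, and they enter cleanly.
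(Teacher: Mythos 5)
Your proposal is correct and follows essentially the same route as the paper: split the transform at $t_L$, send the tail to $1/\lambda$ via Lemma~\ref{th:tlinfcont} on $Q^L$, and on $[0,t_L]$ identify the small-world and big-world transition probabilities up to the event that the underlying Poisson clock makes at least $t_L^2$ jumps (the paper bounds this event by Chebyshev rather than Chernoff and phrases your coupling simply as ``the two probabilities differ only on that event'', which is the same content), finally disposing of $\int_{t_L}^\infty\Prob^{+(x_L)}_{\mathcal B}(X_{2t}=0)\,dt$ via the domination \eqref{eq:Pbwubcont} and the integrability of the big-world Green function. The only cosmetic differences are that the paper treats both scenarios with a single four-term decomposition and handles the factor $1-e^{-\lambda t/(2L)^d}$ on $[0,t_L]$ by dominated convergence, whereas you argue the second scenario by showing the two sides are separately small.
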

\begin{proof}
Note that
%\begin{equation}
%\Gp_{\mathcal{B}}(x_L)=\int_{0}^{\infty}\Prob^{+(x_L)}_{\mathcal{B}}(X_{2t}=0)dt,
%\label{eq:BW2t}
%\end{equation}
%Let $Z(t)$ be the number of exponential clock rings before $t$. 
%By Markov's inequality
%\begin{align}
%\Prob\big(Z(t)\geq k\big)\leq \frac{2t^{2}}{k^{2}}
%\label{ceb}
%\end{align}
%\begin{align}
%\Big|\int_{0}^{t_L} e^{-\frac{\lambda t}{(2L)^d}}
%\mathbb{P}_S^{x_L}(X_{2t}=0)dt-&\int_{0}^{t_L}e^{-\frac{\lambda %t}{(2L)^{d}}}\mathbb{P}^{+(x_L)}_{\mathcal{B}}(X_{2t}=0)dt\Big|\leq 2t_L\Prob (Z(t_L)\geq t^2_{L})\nonumber\\
%\leq & 2t_L\Prob (|Z(t_L)-t_L|\geq t^2_{L}-t_L)\leq \frac{2t_L^2}{(t_L^2-t_L)^2} 
%\label{tmp-cont}
%\end{align}
%which converges to $0$ as $L$ goes to infinity. 
%thus
\begin{align*}
 h^L_S(\lambda)&\le 
\left|\int_{t_L}^\infty e^{-\frac{\lambda t}{(2L)^d}}
\mathbb{P}_S^{x_L}(X_{2t}=0)dt-\frac1\lambda
\right|\\
&+\left|\int_{0}^{t_L} e^{-\frac{\lambda t}{(2L)^d}}\left(
\mathbb{P}_S^{x_L}(X_{2t}=0)dt-\mathbb{P}^{+(x_L)}_{\mathcal{B}}(X_{2t}=0)\right)dt\right|\\
&+\int_{t_{L}}^{\infty}\mathbb{P}_{\mathcal{B}}^{+(x_L)}(X_{2t}=0)dt
+\int_{0}^{t_L}(1- e^{-\frac{\lambda t}{(2L)^d}})
\mathbb{P}_{\mathcal{B}}^{+(x_L)}(X_{2t}=0)dt. 
\end{align*}
By Lemma \ref{th:tlinfcont}, if $S\in Q^L$, the first term is smaller than $\eps/4$ provided
that $L$ is large.

Since either $S\in I(0,t^{2}_L)$ or $d_S(0,x_L)>t^{2}_L$, the probabilities of a meeting 
before time $t_L$ on $S$ and on the big world differ only if 
the value of the underlying Poisson process $N_{t}$ (recall equation \eqref{Xcont})
%the number of exponential clocks $Z(t_L)$ before time $t_L$ 
at time $2t_L$
is at least $t^{2}_{L}$: by Chebyshev's inequality the second term of the 
right hand side is smaller than  
$$
2t_L\Prob (N_{2t_L}\geq t^2_{L})\leq 
 \frac{(2t_L)^2}{(t_L^2-t_L)^2}\leq \eps/4 $$
if $L$ is large enough.

Note that by \eqref{eq:Pbwubcont} the integrands of the last two terms are both
dominated by $\mathbb{P}_{\mathcal{B}}^{0}(X_{2t}=0)$ which does not depend
on $L$ and is integrable. Thus by the Dominated Convergence Theorem
they are both smaller than  $\eps/4$
if $L$ is sufficiently large.
\end{proof}

\begin{theorem}\label{th:Ganncont}
For all $K \in \K$, $\eps>0$ there exists $\widetilde L$ such that for all $L\ge\widetilde L$,
 $x_L\in \Lambda(L)$, and $\lambda\in K$, 
\[
\left|G^L(x_L,\lambda/(2L)^d) 
- \frac1\lambda-\Gp_{\mathcal{B}}(x_L) \right|\le\eps.
\]
\end{theorem}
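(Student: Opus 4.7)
The plan is to deduce the annealed statement from the quenched bound of Theorem \ref{th:Gquenchedcont} by averaging over $\mathbf P$. Writing
\[
G^L(x_L,\lambda/(2L)^d) - \frac{1}{\lambda} - \Gp_{\mathcal B}(x_L) = \sum_S \mathbf P(S)\Bigl[G^L_S(x_L,\lambda/(2L)^d) - \frac{1}{\lambda} - \Gp_{\mathcal B}(x_L)\Bigr],
\]
I would split the sum according to whether $S$ lies in $Q^L\cap I(0,t_L^2)$, in $(Q^L)^c$, or in $Q^L\cap I(0,t_L^2)^c$, and bound each piece uniformly in $\lambda \in K$ and $x_L$.

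On $Q^L\cap I(0,t_L^2)$ the integrand is at most $\eps$ by Theorem \ref{th:Gquenchedcont} (whose bound is uniform in $\lambda$ and $x_L$), so this piece contributes at most $\eps$. On $(Q^L)^c$, the contribution of the $G^L_S$ part is exactly the quantity $g_L$ from Lemma \ref{th:qisocont}, which tends to $0$ uniformly in $\lambda\in K$; the remaining term equals $(1/\lambda+\Gp_{\mathcal B}(x_L))\,\mathbf P((Q^L)^c)\le (1/\lambda+\Gp_{\mathcal B}(0))\cdot o(L^{-2d})$, which is $o(1)$ uniformly on $K$ because $\Gp_{\mathcal B}(0)<\infty$ by \eqref{eq:Pbwubcont} and $\inf K>0$.

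The delicate piece is $Q^L\cap I(0,t_L^2)^c$. By Proposition \ref{th:xgood} applied with radius $t_L^2=(\log\log L)^2$ (whose admissibility $M^{4t_L^2}=o(L^d)$ is immediate since $(\log\log L)^2\log M = o(\log L)$), $\mathbf P(I(0,t_L^2)^c)\le CM^{4t_L^2}/L^d$. For $S\in Q^L$, splitting the integral that defines $G^L_S$ at $t_L$ yields the trivial bound $t_L$ for the head and, by Lemma \ref{th:tlinfcont}, a uniform bound $1/\lambda+o(1)$ for the tail, giving $G^L_S(x_L,\lambda/(2L)^d)\le t_L+1/\lambda+o(1)$ uniformly on $Q^L$. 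Hence this piece contributes at most $\bigl(t_L+1/\lambda+\Gp_{\mathcal B}(0)+o(1)\bigr)\cdot CM^{4t_L^2}/L^d$, which tends to zero uniformly in $\lambda\in K$. The main obstacle is calibrating the two scales here: the radius $t_L^2$ must be large enough that Theorem \ref{th:Gquenchedcont} applies inside the ball, but small enough that $\mathbf P(I(0,t_L^2)^c)$ still beats the crude $t_L$-growth of the trivial bound on $G^L_S$. The choice $t_L=\log\log L$ is only barely admissible, but it suffices because $t_L\,M^{4t_L^2}$ remains sub-polynomial in $L$ while $L^d$ is polynomial.
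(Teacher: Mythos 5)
Your proposal is correct and follows essentially the same route as the paper: decompose the average over $S$ into the pieces $Q^L\cap I(0,t_L^2)$, $(Q^L)^c$, and $Q^L\cap I(0,t_L^2)^c$, use Theorem \ref{th:Gquenchedcont} on the first, Lemma \ref{th:qisocont} on the second, and on the third combine the probability bound of Proposition \ref{th:xgood} with the crude estimate $G^L_S\le t_L+1/\lambda+O(1)$ coming from splitting the integral at $t_L$ and invoking Lemma \ref{th:tlinfcont}. Your remark on the calibration of $t_L^2$ against $\mathbf P(I(0,t_L^2)^c)$ is exactly the point the paper relies on implicitly.
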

%Note that the assumption that $\lim_{L\to\infty}\Gp_{\mathcal{B}}(x_L)$ exists
%is satisfied for instance in two particular cases: if
%if $x_L=x_0$ for all $L$ sufficiently large, or if
%$x_L\to\infty$. In the first case $\lim_{L\to\infty} \Gp_{\mathcal{B}}(x_L)= \Gp_{\mathcal{B}}(+(x_0))$,
%while in the second case $\lim_{L\to\infty} \Gp_{\mathcal{B}}(x_L)=0$.

\begin{proof}
Recall that 
\[
 G^L(x_L,\lambda/(2L)^d) =\sum_S\mathbf P(S) G^L_S(x_L,\lambda/(2L)^d) .
\]
By Theorem \ref{th:Gquenchedcont} there exists $\widetilde L$ such that for all $L\ge\widetilde L$,
\[
 \left|\sum_{S\in Q^L\cap I(0,t^2_L)}\mathbf P(S) G^L_S(x_L,\lambda/(2L)^d)
- \frac1\lambda - \Gp_{\mathcal{B}}(x_L)\right|\le\eps/3.
\]
Thus, since $\mathbf P((Q^L)^c)$ and $\mathbf P(I(0,t^2_L)^c)$ are both small
if $L$ is large, we may choose $\widetilde L$ such that for all $\lambda\in K$
and $L\ge \widetilde L$
\[
 \sum_{S\in (Q^L)^c\cup (I(0,t^2_L))^c}\mathbf P(S)\left(\frac1\lambda + 
\Gp_{\mathcal{B}}(x_L)\right)\le\eps/3.
\]
Now we only need to prove that
\[
 \sum_{S\in (Q^L)^c\cup I(0,t^2_L)^c}\mathbf P(S) G^L_S(x_L,\lambda/(2L)^d)
\le\eps/3.
\]
By Lemma \ref{th:qisocont} we know that $\sum_{S\in (Q^L)^c}\mathbf P(S) G^L_S(x_L,\lambda/(2L)^d)
\le\eps/6$ for all $L\ge\widetilde L$ and $\lambda> 0$.
Finally, by Proposition \ref{th:xgood} and Lemma \ref{th:tlinfcont}, for some $C>0$ and $L$
sufficiently large
\[\begin{split}
    &\sum_{S\in Q^L\cap I(0,t^2_L)^c}\mathbf P(S) G^L_S(x_L,\lambda/(2L)^d) \\
& =\sum_{S\in Q^L\cap I(0,t^2_L)^c}\mathbf P(S)\left(\int_{0}^{t_L} e^{-\frac{\lambda t}{(2L)^d}}
\mathbb{P}_S^{x_L}(X_{2t}=0)+\int_{t_L}^\infty e^{-\frac{\lambda t}{(2L)^d}}
\mathbb{P}_S^{x_L}(X_{2t}=0)\right)\\
&\le \left(t_L+\frac1\lambda+C\right)\mathbf P(I(0,t^2_L)^c)\le\eps/6.
 \end{split}
\]
\end{proof}

\subsection{From G to F}
\label{sec:GFcont}

We note that if $x_{L}\neq 0$ then $G^L_S(x_L,{\lambda}/{(2L)^{d}})$ may be written as
\begin{equation*}
 % G^L_S\left(x_L,\frac{\lambda}{(2L)^{d}}\right)=
\sum_z\int_{0}^\infty e^{-\frac{\lambda q}{(2L)^d}}
\Prob_S^{z}(X_{2q}=z)dq 
\int_{0}^\infty
e^{-\frac{\lambda s}{(2L)^d}}
{\Prob_S^{x_L,0}}(T_L\in ds,X_s=z).
\end{equation*}
while $G^L_S\left(0,%\frac
{\lambda}/{(2L)^{d}}\right)$ is equal to
\begin{equation*}
1
+\sum_z\int_{0}^\infty e^{-\frac{\lambda q}{(2L)^d}}
\Prob_S^{z}(X_{2q}=z)dq
\int_{0}^\infty
e^{-\frac{\lambda s}{(2L)^d}}
{\Prob_S^{0,0}}(T_L\in ds,X_s=z).
\end{equation*}
Define $H_1$, $H_2$ and $H_3$ (which depend on $S$, $x_L$ and $L$) by
\[\begin{split}
   H_1&:=\sum_z\int_{0}^{t_L} e^{-\frac{\lambda q}{(2L)^d}}
\Prob_S^{z}(X_{2q}=z)dq
\int_{0}^{t_L}
e^{-\frac{\lambda s}{(2L)^d}}
{\Prob_S^{x_L,0}}(T_L\in ds,X_s=z)\\
H_2&:=\sum_z\int_{0}^{t_L} e^{-\frac{\lambda q}{(2L)^d}}
\Prob_S^{z}(X_{2q}=z)dq
\int_{t_L}^\infty
e^{-\frac{\lambda s}{(2L)^d}}
{\Prob_S^{x_L,0}}(T_L\in ds,X_s=z)\\
H_3&:=\sum_z\int_{t_L}^\infty e^{-\frac{\lambda q}{(2L)^d}}
\Prob_S^{z}(X_{2q}=z)dq
\int_{0}^\infty
e^{-\frac{\lambda s}{(2L)^d}}
{\Prob_S^{x_L,0}}(T_L\in ds,X_s=z).
  \end{split}
\]
By Lemma \ref{th:qisocont}, for all $L$ sufficiently large and if
the limit exists,
\begin{equation}\label{eq:GLQLcont}
 \lim_{L\to\infty}G^L\left(x_L,\frac{\lambda}{(2L)^{d}}\right)=
\lim_{L\to\infty}\sum_{S\in Q^L}\mathbf P(S)(H_1+H_2+H_3).
\end{equation}
Clearly if $x_L=0$ for all $L$ sufficiently large we only need to add 1 to the previous limit. The same equality holds in discrete time, replacing the integral with the sum.\\
We now study each of the three summands separately, in order to obtain
the limit of $F^L$ as a function of the limit of $G^L$.
\begin{lem}\label{th:H1cont}
 If $S\in I(0,t^{2}_L)$ and $x_L\in\Lambda(L)$, for each $\eps >0$ there exists $\widetilde{L}$ such that for each $L>\widetilde{L}$ then
\begin{equation}\label{eq:thH11cont}
  \left|H_1-\int_{0}^{t_L} e^{-\frac{\lambda q}{(2L)^d}}
\Prob_{\mathcal{B}}^{0}(X_{2q}=0)dq
\int_{0}^{t_L}
e^{-\frac{\lambda s}{(2L)^d}}
{\Prob_S^{x_L,0}}(T_L\in ds)\right|<\eps.
\end{equation}
This inequality also holds whenever $d_S(0,x_L)>t^2_L$.
Moreover, uniformly with respect to the choice of the sequence $\{x_L\}_L$ and of $\lambda$,
\begin{multline}\label{eq:thH12cont}
\sum_{S\in (I(0,t^{2}_L))^c}\mathbf P(S)
\sum_z\int_{0}^{t_L} e^{-\frac{\lambda q}{(2L)^d}}
\Prob_S^{z}(X_{2q}=z)dq\\
\int_{0}^\infty
e^{-\frac{\lambda s}{(2L)^d}}
{\Prob_S^{x_L,0}}(T_L\in ds,X_s=z)\stackrel{L\to \infty}{\rightarrow} 0.
\end{multline}
\end{lem}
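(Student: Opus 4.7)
The strategy for \eqref{eq:thH11cont} is to replace the factor $\Prob_S^z(X_{2q}=z)$ inside $H_1$ by the big-world return probability $\Prob_{\mathcal{B}}^0(X_{2q}=0)$; since this removes the $z$-dependence, the collapse $\sum_z \Prob_S^{x_L,0}(T_L\in ds,X_s=z)=\Prob_S^{x_L,0}(T_L\in ds)$ decouples the two integrals and produces exactly the target expression. The error is controlled separately under the two hypotheses provided in the lemma.

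Under $S\in I(0,t_L^2)$, I use the Poisson-tail estimate $\Prob(N_{2t_L}\ge t_L^2)=O(t_L^{-2})$ (Chebyshev, exactly as in the proof of Theorem~\ref{th:Gquenchedcont}). Off this event, any meeting site $z$ reached from $0$ in time $s\le t_L$ satisfies $d_S(0,z)\le N_s\le t_L^2$, so by lifting a shortest path in $\mathcal{S}$ from $0$ to $z$ edge-by-edge one obtains a lift $\tilde z\in B_{\mathcal{B}}(0,t_L^2)$ with $\phi(\tilde z)=z$, unique by injectivity on this ball. I then couple the walk on $S$ from $z$ with the walk on $\mathcal{B}$ from $\tilde z$: they agree modulo $\phi$ as long as the $\mathcal{B}$-walk stays inside $B_{\mathcal{B}}(0,t_L^2)$, and exit again costs only $O(t_L^{-2})$. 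This yields $|\Prob_S^z(X_{2q}=z)-\Prob_{\mathcal{B}}^{\tilde z}(X_{2q}=\tilde z)|=O(t_L^{-2})$ uniformly in the relevant $z$, and translation invariance of $\mathcal{B}$ gives $\Prob_{\mathcal{B}}^{\tilde z}(X_{2q}=\tilde z)=\Prob_{\mathcal{B}}^0(X_{2q}=0)$. Multiplying by $\int_0^{t_L}dq\le t_L$ and $\Prob_S^{x_L,0}(T_L\le t_L)\le 1$, the total discrepancy is $O(t_L\cdot t_L^{-2})=o(1)$. Under $d_S(0,x_L)>t_L^2$, a meeting by time $t_L$ forces the combined Poisson clocks of the two walks to exceed $t_L^2$, whence $\Prob_S^{x_L,0}(T_L\le t_L)=O(t_L^{-3})$; both $H_1$ and the target are then bounded by $\Gp_{\mathcal{B}}(0)\cdot\Prob_S^{x_L,0}(T_L\le t_L)\to 0$, using $\Prob_S^z(X_{2q}=z)\le 1$ on one side and $\int_0^{t_L}\Prob_{\mathcal{B}}^0(X_{2q}=0)dq\le\Gp_{\mathcal{B}}(0)<\infty$ on the other.

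For \eqref{eq:thH12cont} a crude uniform bound suffices: using $\Prob_S^z(X_{2q}=z)\le 1$ and $\sum_z\Prob_S^{x_L,0}(T_L\in ds,X_s=z)\le\Prob_S^{x_L,0}(T_L\in ds)$, the inner expression is at most $t_L\cdot\Prob_S^{x_L,0}(T_L<\infty)\le t_L$, uniformly in $S$, in $x_L\in\Lambda(L)$ and in $\lambda\ge 0$. Since $t_L=\log\log L$ satisfies $M^{4t_L^2}=o(L^d)$, Proposition~\ref{th:xgood} applied with radius $t_L^2$ gives $\mathbf P((I(0,t_L^2))^c)=O(M^{4t_L^2}/L^d)$, so the whole sum is $O(t_L M^{4t_L^2}/L^d)=o(1)$. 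The main obstacle is the coupling step for the first assertion: verifying rigorously that on $I(0,t_L^2)\cap\{N_{2t_L}\le t_L^2\}$ the forward walk on $S$ from any meeting site $z$ in the support really equals, modulo $\phi$, the big-world walk from $\tilde z$, with a uniform-in-$z$ error bound of order $o(1/t_L)$ strong enough for the additive decoupling to close.
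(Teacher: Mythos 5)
Your proposal follows essentially the same route as the paper's proof: restrict to the event that the Poisson clocks take few jumps, use the injectivity of $\phi$ on $B_{\mathcal{B}}(0,t_L^2)$ to identify $\Prob_S^{z}(X_{2q}=z)$ with $\Prob_{\mathcal{B}}^{0}(X_{2q}=0)$ for every $z$ in the support of the meeting position, bound the discrepancy by the Chebyshev tail of the Poisson clocks times the mass $t_L$ of the $q$-integral, and dispose of the case $d_S(0,x_L)>t_L^2$ and of \eqref{eq:thH12cont} by the same crude bounds (for the latter, $t_L\cdot\mathbf P\bigl((I(0,t_L^2))^c\bigr)=O(t_LM^{4t_L^2}/L^d)\to0$, exactly as in the paper). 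The one step that fails as written is the claim that ``exit costs only $O(t_L^{-2})$'': under your single event $N_{2t_L}<t_L^2$ the lift $\tilde z$ may sit at graph distance nearly $t_L^2$ from $0$, so the lifted return walk can leave $B_{\mathcal{B}}(0,t_L^2)$ --- outside of which $z$ may have a second, uncontrolled preimage --- after a single jump, and the exit probability is therefore not uniformly small over the admissible $z$. The repair is precisely what the paper does: split the step budget between the two walks, requiring $N_{t_L}<t_L^2/2$ for the clock of the walk from $0$ (so that $|\tilde z|\le d_S(0,z)<t_L^2/2$) and $N'_{2t_L}<t_L^2/2$ for the clock of the return walk from $z$; on the intersection the entire lifted return path stays in $B_{\mathcal{B}}(0,t_L^2)$, where the identification of the two return probabilities is exact, and both Chebyshev tails are $O(t_L^{-3})$, giving a total error $O(t_L\cdot t_L^{-3})=o(1)$. (A cosmetic point: in the case $d_S(0,x_L)>t_L^2$ the bound $\Prob_S^{z}(X_{2q}=z)\le1$ gives $H_1\le t_L\,\Prob_S^{x_L,0}(T_L\le t_L)$, not $\Gp_{\mathcal{B}}(0)\,\Prob_S^{x_L,0}(T_L\le t_L)$, but since $\Prob_S^{x_L,0}(T_L\le t_L)=O(t_L^{-3})$ the conclusion is unaffected.)
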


\begin{proof}
We first note that if we consider the discrete time random walk
then at time $t_L$ the walker is at a distance at most $t_L$
from her starting site. Thus the sites $z$ in the sum of $H_1$
are those at distance at most $t_L$ from 0 (all other terms
being zero). Then, since $q\le t_L$ and $S\in I(0,t_L^2)$ 
(actually, in discrete time $S\in I(0,2t_L)$ suffices),
we have that  
$\Prob_S^{z}(X_{2q}=z)=\Prob_{\mathcal{B}}^{0}(X_{2q}=0)$
and the difference in equation~\eqref{eq:thH11cont}
is equal to zero. On the other hand, if  $d_S(0,x_L)>t^2_L$
it is not possible for two random walkers starting at 0 and at $x_L$
respectively, to meet within time $t_L$ and all quantities in equation~\eqref{eq:thH11cont}
are zero.

In continuous time the walkers may take a large number of steps even
in a small amount of time (though this is quite unlikely).
Denote by $N_t$ the Poisson process underlying the random walk from 0
(see the second integral in $H_1$) and by $N_t^\prime$ 
the Poisson process underlying the random walk from $z$
(see the first integral in $H_1$).

Suppose that $S\in I(0,t_L^2)$: if 
$(N_{t_L}<t_L^2/2)$ and $(N_{2t_L}^\prime<t_L^2)$
then the whole path from $z$ to $z$ of duration $2q$
lies in the ball of radius $t_L^2$ centered at 0
and the law of the walk coincides with the corresponding
walk on the big world.
By Chebyshev's inequality 
\begin{equation}\label{eq:Poisson-Cheb}
 \Prob(N_{t_L}\ge t_L^2/2)\le\frac{C}{t_L^3}\text{ and }
\Prob(N_{2t_L}^\prime\ge t_L^2)\le\frac{C}{t_L^3},
\end{equation}
for some positive constant $C$.
%% Thus we write $H_1$ as
%% \begin{equation}
%%  \begin{split}
%%    & \sum_z\int_{0}^{t_L} e^{-\frac{\lambda q}{(2L)^d}}
%% \Prob_S^{z}(X_{2q}=z,N^\prime_{2t_L}<t_L^2)dq
%% \int_{0}^{t_L}
%% e^{-\frac{\lambda s}{(2L)^d}}
%% {\Prob_S^{x_L,0}}(T_L\in ds,X_s=z,N_{t_L}<t_L^2/2)\\
%% & +\sum_z\int_{0}^{t_L} e^{-\frac{\lambda q}{(2L)^d}}
%% \Prob_S^{z}(X_{2q}=z,N^\prime_{2t_L}\ge t_L^2)dq
%% \int_{0}^{t_L}
%% e^{-\frac{\lambda s}{(2L)^d}}
%% {\Prob_S^{x_L,0}}(T_L\in ds,X_s=z,N_{t_L}<t_L^2/2)\\
%% &
%% +\sum_z\int_{0}^{t_L} e^{-\frac{\lambda q}{(2L)^d}}
%% \Prob_S^{z}(X_{2q}=z,N^\prime_{2t_L}< t_L^2)dq
%% \int_{0}^{t_L}
%% e^{-\frac{\lambda s}{(2L)^d}}
%% {\Prob_S^{x_L,0}}(T_L\in ds,X_s=z,N_{t_L}\ge t_L^2/2)\\
%% &
%% +\sum_z\int_{0}^{t_L} e^{-\frac{\lambda q}{(2L)^d}}
%% \Prob_S^{z}(X_{2q}=z,N^\prime_{2t_L}\ge t_L^2)dq
%% \int_{0}^{t_L}
%% e^{-\frac{\lambda s}{(2L)^d}}
%% {\Prob_S^{x_L,0}}(T_L\in ds,X_s=z,N_{t_L}\ge t_L^2/2)
%%  \end{split}
%% \end{equation}
Then straightforward computation shows that $H_1$ differs from
\begin{equation}
 \sum_z\int_{0}^{t_L} e^{-\frac{\lambda q}{(2L)^d}}
\Prob_S^{z}(X_{2q}=z,N^\prime_{2t_L}<t_L^2)dq
\int_{0}^{t_L}
e^{-\frac{\lambda s}{(2L)^d}}
{\Prob_S^{x_L,0}}(T_L\in ds,X_s=z,N_{t_L}<t_L^2/2)
\end{equation}
at most by $3t_L^2\cdot C/t_L^3$.
The same argument that we used in discrete time gives
\[
\Prob_S^{z}(X_{2q}=z,N^\prime_{2t_L}<t_L^2)=\Prob_{\mathcal{B}}^{0}(X_{2q}=0,N^\prime_{2t_L}<t_L^2)
\]
and by equation~\eqref{eq:Poisson-Cheb}, equation~\eqref{eq:thH11cont} follows
(similarly one proves it in the case $d_S(0,x_L)>t^2_L$).

In order to prove \eqref{eq:thH12cont}, note that for some $C>0$
\begin{equation*}
\begin{split}
 \sum_{S\in I(0,t^2_L)^c}&\mathbf P(S) 
\sum_z\int_{0}^{t_L} e^{-\frac{\lambda q}{(2L)^d}}
\Prob_S^{z}(X_{2q}=z)
\int_{0}^{\infty}
e^{-\frac{\lambda s}{(2L)^d}}
{\Prob_S^{x_L,0}}(T_L=s,X_s=z)\\
& \le Ct_L\,\mathbf P(I(0,t^2_L)^c)
\,F^L(x_L,\lambda/(2L)^d),
\end{split}
\end{equation*}
which, by Proposition \ref{th:xgood} and since $F^L(x,\lambda)\le1$
for all $\lambda$ and $x$, goes to 0, uniformly in $x_L$ and $\lambda$, as $L$ goes to infinity.

%% ****************************
%% 
%% \begin{equation}
%% \int_0^{t_L}\left|\Prob_S^{z}(X_{2q}=z)-
%% \Prob_{\mathcal{B}}^{0}(X_{2q}=0)\right|dq\leq \frac{4t^2_L}{(t^{2}_{L}-2t_L)^2}.
%% \label{tmp-Cheb}
%% \end{equation}
\end{proof}

\begin{lem}\label{th:H2cont}
For all $K\in \K$, $\eps>0$ there exists $\widetilde L$ such that for all $L\ge\widetilde L$, $x_L$
and $\lambda\ \in K$,
\begin{multline}\label{eq:H2cont}
\left|\sum_{S\in Q^L}\mathbf P(S)H_2
-\int_{0}^{t_L} e^{-\frac{\lambda q}{(2L)^d}}
\Prob_{\mathcal{B}}^{0}(X_{2q}=0)dq\right.\\
\left.\sum_{S\in Q^L}\mathbf P(S)\int_{t_L}^\infty
e^{-\frac{\lambda s}{(2L)^d}}
{\Prob_S^{x_L,0}}(T_L\in ds)\right|\le\eps.
\end{multline}
\end{lem}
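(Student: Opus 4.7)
The plan is to recast the expression in the absolute value of~\eqref{eq:H2cont} as a sum over the meeting location $z$ and to split each contribution according to whether the environment near $z$ looks like a ball in the big world. Using $\sum_z \Prob_S^{x_L,0}(T_L\in ds, X_s=z) = \Prob_S^{x_L,0}(T_L\in ds)$, the signed quantity inside the absolute value equals
\begin{equation*}
\sum_{S\in Q^L}\mathbf P(S)\sum_z\int_0^{t_L}e^{-\frac{\lambda q}{(2L)^d}}\bigl[\Prob_S^z(X_{2q}=z)-\Prob_{\mathcal B}^0(X_{2q}=0)\bigr]dq\int_{t_L}^\infty e^{-\frac{\lambda s}{(2L)^d}}\Prob_S^{x_L,0}(T_L\in ds, X_s=z),
\end{equation*}
and I would split each $z$-term according to whether $S\in I(z,t_L^2)$ or $S\in I(z,t_L^2)^c$.

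On $\{S\in I(z,t_L^2)\}$ I would reuse the Poisson-exit argument of Lemma~\ref{th:H1cont}: provided the Poisson process underlying the walk started at $z$ makes fewer than $t_L^2$ jumps by time $2t_L$ (Chebyshev gives $\Prob(N_{2t_L}\ge t_L^2)\le C/t_L^3$), the path of duration $2q\le 2t_L$ stays inside the isomorphic big-world ball and $|\Prob_S^z(X_{2q}=z)-\Prob_{\mathcal B}^0(X_{2q}=0)|\le C/t_L^3$ uniformly in $q\le t_L$. Integrating in $q$ yields a factor $C/t_L^2$; summing $\Prob_S^{x_L,0}(T_L\in ds, X_s=z)$ over $z$ collapses to the sub-probability $\Prob_S^{x_L,0}(T_L\in ds)$ of total mass at most $1$. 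The overall contribution from this case is $O(1/t_L^2)\to 0$.

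The main obstacle is the contribution from $\{S\in I(z,t_L^2)^c\}$, where the bracket is bounded only trivially by $2$ and thus produces a factor $2t_L$ after the $q$-integration. I need to show that
\begin{equation*}
\sum_z\mathbf E\Bigl[\mathbf 1_{S\in Q^L\cap I(z,t_L^2)^c}\int_{t_L}^\infty e^{-\frac{\lambda s}{(2L)^d}}\Prob_S^{x_L,0}(T_L\in ds, X_s=z)\Bigr]=o(1/t_L).
\end{equation*}
Setting $g(s,z):=\Prob_S^{x_L}(X_s=z)\Prob_S^0(Y_s=z)$, the CTMC entry-rate formula into the diagonal, combined with the independence of $X$ and $Y$ before meeting, yields the pointwise density bound $f(s,z)\le 2g(s,z)+\partial_sg(s,z)$ for the density $f(s,z)$ of $(T_L,X_{T_L})$; integration by parts then produces $\int_{t_L}^\infty e^{-\lambda s/(2L)^d}f(s,z)\,ds\le(2+\lambda/(2L)^d)\int_{t_L}^\infty e^{-\lambda s/(2L)^d}g(s,z)\,ds$.

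Expanding $\Prob_S^u(X_s=\cdot)=1/(2L)^d+\eps_s^{S,u}(\cdot)$, reversibility together with Proposition~\ref{th:conv-estim} yields on $Q^L$ the $\ell^2$-bound $\sum_z\eps_s^{S,u}(z)^2=\Prob_S^u(X_{2s}=u)-1/(2L)^d\le e^{-2\gamma s}$. Cauchy--Schwarz applied with the indicator of $A_S:=\{z:S\in I(z,t_L^2)^c\}$ then gives
\begin{equation*}
\sum_{z\in A_S}g(s,z)\le\frac{|A_S|}{(2L)^{2d}}+\frac{2\sqrt{|A_S|}}{(2L)^d}\,e^{-\gamma s}+e^{-2\gamma s}.
\end{equation*}
Integrating against $e^{-\lambda s/(2L)^d}$ over $[t_L,\infty)$ and averaging, one uses Proposition~\ref{th:xgood} and translation invariance of $\mathbf P$ to bound $\mathbf E[|A_S|]\le(2L)^d\mathbf P(I(0,t_L^2)^c)=O(M^{4t_L^2})=L^{o(1)}$ (since $t_L=\log\log L$), together with Jensen's inequality $\mathbf E[\sqrt{|A_S|}]\le\sqrt{\mathbf E[|A_S|]}$. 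Each of the three resulting terms is $o(1/t_L)$: the first two because of the factors $1/L^d$, the third because $(\log\log L)(\log L)^{-2\gamma}\to 0$ for any $\gamma>0$. Uniformity in $\lambda\in K$ and in $x_L$ is inherited from the uniform character of all estimates above.
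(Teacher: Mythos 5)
Your proof is correct and arrives at the bound by the same initial decomposition (sum over the meeting site $z$, split according to whether $S\in I(z,t_L^2)$, Poisson/Chebyshev localization giving $O(1/t_L^2)$ for the good part), but it handles the crucial error term --- the contribution of environments that fail to look like the big world near $z$ --- by a genuinely different route. The paper splits that contribution (its $H_{2,2}$) at time $\log L$: on $[t_L,\log L]$ it drops the environment restriction and collapses the sum over $z$ via $\sum_z\Prob_S^{x_L}(X_s=z)\Prob_S^{0}(Y_s=z)=\Prob_S^{0}(X_{2s}=x_L)$, while on $[\log L,\infty)$ it uses the pointwise bound $\Prob_S^{z}(X_s=\cdot)\le e^{-\gamma s}+(2L)^{-d}$ together with the count $\sum_z\mathbf P(I(z,t_L^2)^c)\le CM^{4t_L^2}$; in both regimes it tacitly dominates the first-meeting measure $\Prob_S^{x_L,0}(T_L\in ds,X_s=z)$ by the diagonal occupation density $\Prob_S^{x_L,0}(X_s=Y_s=z)\,ds$. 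You instead make that domination precise through the entry-rate inequality $f(s,z)\le 2g(s,z)+\partial_s g(s,z)$ plus integration by parts, and then avoid the time-splitting entirely with an $\ell^2$ argument: Cauchy--Schwarz against the random bad set $A_S$, the identity $\sum_z\bigl(\Prob_S^{u}(X_s=z)-(2L)^{-d}\bigr)^2=\Prob_S^{u}(X_{2s}=u)-(2L)^{-d}\le e^{-2\gamma s}$ on $Q^L$, and $\mathbf E|A_S|\le CM^{4t_L^2}=L^{o(1)}$. Your version buys a single clean estimate valid on all of $[t_L,\infty)$ and a rigorous justification of the density comparison that the paper leaves implicit; the paper's version is more elementary in that it only invokes the sup-norm convergence estimate \eqref{eq:estim_cont1} rather than its $\ell^2$ consequence. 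Both rest on the same three pillars (Propositions \ref{th:xgood} and \ref{th:conv-estim} and the Poisson concentration bound), so I see no gap.
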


\begin{proof}
Note that $\sum_{S\in Q^L}\mathbf P(S)H_2$ can be written as
\[\begin{split}
&\sum_{z}\sum_{S \in Q^L\cap I(z,t_L^{2})}
\mathbf P(S)\int_{0}^{t_{L}}e^{-\frac{\lambda 
q}{(2L)^{d}}}\mathbb{P}_{S}^{z}(X_{2q}=z)dq\int_{t_{L}}^{\infty}e^{-\frac{\lambda s}{(2L)^{d}}}
\mathbb{P}_{S}^{x_L,0}(T_L\in ds,X_{s}=z)\\
\\
&+\sum_{z}\sum_{S \in Q^L\cap I(z,t_L^{2})^c}\mathbf P(S)
\int_{0}^{t_{L}}e^{-\frac{\lambda 
q}{(2L)^{d}}}\mathbb{P}_{S}^{z}(X_{2q}=z)dq\int_{t_{L}}^{\infty}e^{-\frac{\lambda s}{(2L)^{d}}}
\mathbb{P}_{S}^{x_L,0}(T_L\in ds,X_{s}=z)\\
&=H_{2,1}+H_{2,2}.
\end{split}
\]
We prove that $H_{2,2}\to0$, indeed since $\exp(-\lambda q/(2L)^d)\mathbb P_S^{z}(X_{2q}=z)\le 1$ then
\[\begin{split}
H_{2,2}&\le t_L\sum_z\sum_{S \in Q^L\cap I(z,t^{2}_L)^c}\mathbf P(S)
\left\{\int_{t_{L}}^{\log L}e^{-\frac{\lambda s}{(2L)^{d}}}
\mathbb{P}_{S}^{x_L,0}(X_{s}=Y_{s}=z)ds\right.\\
&\left.+\int_{\log L}^\infty e^{-\frac{\lambda s}{(2L)^{d}}}
\mathbb{P}_{S}^{x_L,0}(X_{s}=Y_{s}=z)ds\right\}
=:H_{2,2,1}+H_{2,2,2}.
\end{split}
\]
Note that $H_{2,2,1}$ is smaller or equal to
\[\begin{split}
%&t_L\sum_z\sum_{S \in Q^L\cap I(z,t^2_L)^c}\mathbf P(S)
%\int_{t_{L}}^{\log L} e^{-\frac{\lambda s}{(2L)^{d}}}
%\mathbb{P}_{S}^{x_L,0}(X_{s}=Y_{s}=z)ds
%\\
%& t_L\sum_{S \in Q^L}\mathbf P(S)
%\int_{t_{L}}^{\log L} e^{-\frac{\lambda s}{(2L)^{d}}}
%\Prob_S^{x_L,0}(X_{s}=Y_{s})ds =
&t_L\sum_{S \in Q^L}\mathbf P(S)
\int_{t_{L}}^{\log L} e^{-\frac{\lambda s}{(2L)^{d}}}
\Prob_S^{0}(X_{2s}=x_L)ds.
\end{split}
\]
We write $\Prob_S^{0}(X_{2s}=x_L)\le \left|\Prob_S^{0}(X_{2s}=x_L)-1/(2L)^d\right|
+1/(2L)^d$, which by \eqref{eq:estim_cont1} is smaller or
equal to $e^{-\gamma s}+1/(2L)^d$ (recall that $\gamma$ depends only on the parameter
$\alpha$ which has been fixed in $Q^L$).
It is thus only a matter of computation to show that  $H_{2,2,1}$
goes to zero (uniformly in $x_L$ and $\lambda$) as $L$ goes to infinity.\\
Now we consider $H_{2,2,2}$.
%, that is
%\[t_L\sum_z\sum_{S \in Q^L\cap I(z,t^2_L)^c}\mathbf P(S)
%\int_{\log L}^{\infty} e^{-\frac{\lambda s}{(2L)^{d}}}\mathbb{P}_{S}^{x_L,0}(X_{s}=Y_{s}=z)ds
%.
%\]
Note that 
$\mathbb{P}_{S}^{x_L,0}(X_{s}=Y_{s}=z)=\Prob_S^z(X_s=0)\Prob_S^z(Y_s=x_L)$. 
%\le(\Prob_S^z(X_s=z))^2,
%where in the last inequality we used the fact that for a reversible walk
%the most likely place to be after $s$ steps is the starting point (see \cite[Ch. 7, p.5]{cf:AldousFill}).
Write 
\[
 \Prob_S^z(X_s=0)=\Prob_S^z(X_s=0)-\frac{1}{(2L)^d}+\frac{1}{(2L)^d}
\]
and do the same for $\Prob_S^z(Y_s=x_L)$. Using \eqref{eq:estim_cont1} and 
Proposition \ref{th:xgood}, we have that
$H_{2,2,2}$ is smaller or equal to
\[\begin{split}
&t_L\sum_z\sum_{S \in Q^L\cap I(z,t^2_L)^c}\mathbf P(S)
\int_{\log L}^\infty e^{-\frac{\lambda
 s}{(2L)^{d}}}\left(e^{-\gamma s}+\frac{1}{(2L)^d}\right)^2ds\\
&\le C t_L L^d\frac{M^{4t^2_L}}{L^d}
\int_{\log L}^{\infty} e^{-\frac{\lambda
 s}{(2L)^{d}}}\left(e^{-2\gamma s}+\frac{1}{(2L)^{2d}}+\frac{2e^{-\gamma s}}{(2L)^{d}}\right)ds
\le C^\prime \frac{t_LM^{4t_L^2}}{L^{2\gamma}},\\
%&\left(
%\frac{1}{2c+\lambda/(2L)^d}e^{-\log L(2c+\lambda/(2L)^d)}+\frac{2}{(c+\lambda/(2L)^d)(2L)^d}e^{-\log L(2c+\lambda/(2L)^d)}
%+\frac{1}{\lambda(2L)^d}e^{-\log L\lambda/(2L)^d}
%\right)
\end{split}
\]
for some $C^\prime>0$. The last quantity %which 
goes to 0 (uniformly in $x_L$ and $\lambda \in K$ for each $K\in \K$) as $L\to\infty$
(the numerator contains only terms which are logarithmic in $L$).

We now prove that equation \eqref{eq:H2cont} holds with $H_{2,1}$
in place of $\sum_{S\in Q^L}\mathbf P(S)H_2$.
The same arguments that we used to prove that $H_{2,2}$ converges to 0 tell us that
\[\begin{split}
\sum_{z}\sum_{S \in Q^L\cap I(z,t^{2}_L)^c}\mathbf P(S)\int_{0}^{t_{L}}e^{-\frac{\lambda 
q}{(2L)^{d}}}\Prob_{\mathcal{B}}^{0}(X_{2q}=0)dq
\int_{t_{L}}^{\infty}
e^{-\frac{\lambda s}{(2L)^{d}}}\mathbb{P}_{S}^{x_L,0}(T_L \in ds,X_{s}=z).
\end{split}
\]
converges to zero.
Thus we are left with
\begin{equation}\label{eq:lastpiece}
 \begin{split}
 \sum_z\sum_{S \in Q^L\cap I(z,t^{2}_L)}\mathbf P(S)\int_{0}^{t_{L}}e^{-\frac{\lambda 
q}{(2L)^{d}}}&\left|\mathbb{P}_{S}^{z}(X_{2q}=z)
-\Prob_{\mathcal{B}}^{0}(X_{2q}=0)\right|dq\\
&\int_{t_{L}}^{\infty}e^{-\frac{\lambda s}{(2L)^{d}}}
\mathbb{P}_{S}^{x_L,0}(T_L\in ds,X_{s}=z)
\end{split}
\end{equation}
and we have to prove that it is small when $L$ is large.
By \eqref{eq:Poisson-Cheb}
%% \textbf{CONTROLLA LA REFERENZA CHE SIA CORRETTA}
 we have $\int_0^{t_L}|\Prob_S^{z}(X_{2q}=z)-\Prob_{\mathcal{B}}^{0}(X_{2q}=0)|dq\leq C/t_L^2$. 
Thus the quantity in equation \eqref{eq:lastpiece} is at most
\[
 \frac{C}{t_L^2}\sum_{S\in Q^L}\int_0^\infty e^{-\frac{\lambda s}{(2L)^{d}}}
\mathbb{P}_S^{x_L,0}(T_L \in ds)\le \frac{C}{t_L^2}F^L(x_{L},\lambda/(2L)^{d})\]
which can be taken as small as we want if $L$ is large (recall that 
$F^L(x_{L},\lambda/(2L)^{d})\le1$ and that all sums and integrals are interchangeable
since all quantities are nonnegative). %use Tonelli or Beppo Levi's Theorem).
%% 
%% ************************
%% \begin{align*}
%% \Big|&\int_{0}^{t_{L}}e^{-\frac{\lambda q}{(2L)^{d}}}\mathbb{P}_{S}^{z}(X_{2q}=z)dq\sum_{S \in Q^L}\mathbf P(S)\int_{t_{L}}^{\infty}
%% e^{-\frac{\lambda s}{(2L)^{d}}}\mathbb{P}_S^{x_L,0}(T_L\in ds)\\
%% &-\int_{0}^{t_{L}}e^{-\frac{\lambda q}{(2L)^{d}}}\mathbb{P}_{\mathcal{B}}^{0}(X_{2q}=0)dq\sum_{S \in Q^L}\mathbf P(S)\int_{t_{L}}^{\infty}
%% e^{-\frac{\lambda s}{(2L)^{d}}}\mathbb{P}_S^{x_L,0}(T_L \in ds)\Big|\\
%% & \leq \int_{0}^{t_{L}}e^{-\frac{\lambda 
%% q}{(2L)^{d}}}\Big|\mathbb{P}_{S}^{z}(X_{2q}=z)dq-\mathbb{P}_{\mathcal{B}}^{0}(X_{2q}=0)\Big|dq\sum_{S \in Q^L}\mathbf P(S)\int_{t_{L}}^{\infty}
%% e^{-\frac{\lambda s}{(2L)^{d}}}\mathbb{P}_S^{x_L,0}(T_L \in ds)\\
%% & \leq \frac{C}{t^2_{L}}F^L(x_{L},\lambda/(2L)^{d})
%% \end{align*}
%% which can be taken as small as we want if $L$ is large.
\end{proof}

\begin{lem}\label{th:H2quenchedcont}
Let
\begin{equation}
a^L_\lambda(S):=H_2
-\int_{0}^{t_L} e^{-\frac{\lambda q}{(2L)^d}}
\Prob_{\mathcal{B}}^{0}(X_{2q}=0)dq
\int_{t_L}^\infty
e^{-\frac{\lambda s}{(2L)^d}}
{\Prob_S^{x_L,0}}(T_L\in ds).
\end{equation}
Then $a^L_\lambda\ge0$ and $a^L_\lambda\to 0$ in probability (for each $K \in \K$, uniformly in $x_L$ and $\lambda\in K$), that is for all $K \in \K$, $\eps>0$
and $\delta>0$ there exists $\widetilde L$ such that for all $L\ge\widetilde L$ and $x_L$
\[
 \mathbf P(A^L_\eps(K)):=\mathbf P(S:a^L_\lambda(S)\leq \eps, \forall \lambda \in K)\geq 1-\delta.
\]
\end{lem}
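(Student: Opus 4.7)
The plan is to exploit a pointwise-in-$z$ decomposition of $a^L_\lambda(S)$ that makes both nonnegativity and monotonicity in $\lambda$ transparent, and then combine these with Lemma \ref{th:H2cont} via Markov's inequality. Because $\sum_z \Prob_S^{x_L,0}(T_L \in ds, X_s = z) = \Prob_S^{x_L,0}(T_L \in ds)$, the subtracted term in the definition of $a^L_\lambda$ absorbs into the $z$-sum from $H_2$ to yield
\[
a^L_\lambda(S) = \sum_z A_z(S,\lambda)\, B_z(S,x_L,\lambda),
\]
where $A_z(S,\lambda) := \int_0^{t_L} e^{-\lambda q/(2L)^d}\bigl[\Prob_S^z(X_{2q}=z) - \Prob_{\mathcal{B}}^0(X_{2q}=0)\bigr]\,dq$ and $B_z(S,x_L,\lambda) := \int_{t_L}^\infty e^{-\lambda s/(2L)^d}\Prob_S^{x_L,0}(T_L \in ds, X_s = z) \ge 0$.

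To see that $A_z(S,\lambda) \ge 0$ I would use the random map $\phi$ projecting $\mathcal B$ onto the small world: by construction of $P_S$ from $\Delta$ and $A_S$, the random walk on $\mathcal B$ commutes with $\phi$, and since $+(z) \in \phi^{-1}(z)$,
\[
\Prob_S^z(X_{2q} = z) = \Prob_{\mathcal B}^{+(z)}\bigl(X_{2q} \in \phi^{-1}(z)\bigr) \ge \Prob_{\mathcal B}^{+(z)}(X_{2q} = +(z)) = \Prob_{\mathcal B}^0(X_{2q} = 0),
\]
the last equality being vertex transitivity of $\mathcal B$. This yields $A_z \ge 0$ and hence $a^L_\lambda(S) \ge 0$. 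Moreover both $A_z$ and $B_z$ are nonnegative and weakly decreasing in $\lambda$, since $\lambda$ enters only through positive decreasing exponential weights; consequently $a^L_\lambda(S) \le a^L_{\lambda_0}(S)$ for every $\lambda \ge \lambda_0 := \inf K > 0$.

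Finally I would apply Lemma \ref{th:H2cont} at $\lambda = \lambda_0$: the $q$-integral in the subtracted term of $a^L_\lambda$ does not depend on $S$, so \eqref{eq:H2cont} is precisely the statement that $\sum_{S \in Q^L} \mathbf P(S)\, a^L_{\lambda_0}(S) \to 0$ as $L \to \infty$, uniformly in $x_L$. Combining this with $\mathbf P((Q^L)^c) \to 0$ from Lemma \ref{th:qisocont}, the nonnegativity of $a^L_{\lambda_0}$, and Markov's inequality gives, for any $\eta > 0$,
\[
\mathbf P\bigl(\{S: a^L_{\lambda_0}(S) > \eta\}\bigr) \le \mathbf P((Q^L)^c) + \frac{1}{\eta}\sum_{S \in Q^L}\mathbf P(S)\, a^L_{\lambda_0}(S) \xrightarrow{L\to\infty} 0,
\]
uniformly in $x_L$; the monotonicity in $\lambda$ upgrades this to $\mathbf P(\{S: a^L_\lambda(S) \le \eta\ \forall \lambda \in K\}) \ge \mathbf P(\{S: a^L_{\lambda_0}(S) \le \eta\}) \to 1$, which is the claim. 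The main difficulty is the nonnegativity inequality $\Prob_S^z(X_{2q}=z) \ge \Prob_{\mathcal B}^0(X_{2q}=0)$, which hinges on the walk-lifting from the small world to the big world; once it is in place, the rest is a routine assembly of Markov, monotonicity, and Lemma \ref{th:H2cont}.
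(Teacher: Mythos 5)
Your proof is correct and follows essentially the same route as the paper: nonnegativity via the pointwise inequality $\Prob_S^z(X_{2q}=z)\ge\Prob_{\mathcal B}^0(X_{2q}=0)$, then the observation that $\sum_{S\in Q^L}\mathbf P(S)\,a^L_\lambda(S)$ is exactly the quantity controlled by Lemma \ref{th:H2cont}, combined with a Markov-type argument (which the paper phrases as a proof by contradiction). You additionally spell out two points the paper leaves implicit --- the lifting argument via $\phi$ behind the key inequality, and the monotonicity of $a^L_\lambda(S)$ in $\lambda$ that justifies the uniformity over $\lambda\in K$ --- both of which are sound and worth making explicit.
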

\begin{proof}  
We first note that for all $z$ and $S$,
$\Prob_{S}^{z}(X_{2q}=z)\ge\Prob_{\mathcal{B}}^{0}(X_{2q}=0)$,
hence $a^L_\lambda(S)\ge0$. Suppose by contradiction that there
exist $K$, $\eps>0$ and $\delta>0$ such that $\mathbf P(A^L_\eps(K))\le 1-\delta$
infinitely often.
Then infinitely often
\[
 \sum_S\mathbf P(S) a^L_\lambda(S)>\delta\eps.
\]
By Lemmas \ref{th:H2cont} and \ref{th:qisocont}, there exists $\widetilde{L}$ such that $\sum_S\mathbf P(S) a_\lambda^L(S)<\delta\eps$ for each $L\geq \widetilde{L}$, $x_{L}$, $\lambda \in K$, whence the contradiction.
\end{proof}

\begin{lem}\label{th:H3cont}
For all $K\in \K$ and $\eps>0$ there exists $\widetilde L$ such that for all $L\ge\widetilde L$, $S\in Q^L$, 
$x_L$ and $\lambda \in K$,
\begin{equation}\label{eq:limH3cont}
\left|H_3
-\frac1\lambda %e^{-\lambda t_L/(2L)^d}
F^L_S(x_L,\lambda /(2L)^d)\right|\le\eps.
\end{equation}
\end{lem}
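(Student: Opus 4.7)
The strategy is to pull the factor $1/\lambda$ out of the inner $q$-integral in $H_3$. Writing
\[
\int_{t_L}^\infty e^{-\frac{\lambda q}{(2L)^d}}\Prob_S^z(X_{2q}=z)\,dq=\frac{1}{\lambda}+r^L_S(z,\lambda),
\]
I would show that the error $r^L_S(z,\lambda)$ is uniformly small in $z$, and then use that after this substitution the remaining double integral, summed over $z$, collapses exactly to $F^L_S(x_L,\lambda/(2L)^d)$.

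The first step is to establish that $r^L_S(z,\lambda)\to 0$ uniformly in $z\in\Lambda(L)$, $S\in Q^L$ and $\lambda\in K$. This is essentially Lemma \ref{th:tlinfcont} with the starting point coinciding with the target: its proof only relied on the estimate \eqref{eq:estim_cont1}, which is uniform over all pairs of vertices. Applying \eqref{eq:estim_cont1} with $x=y=z$ gives $|\Prob_S^z(X_{2q}=z)-1/(2L)^d|\le e^{-2\gamma q}$, and running exactly the computation \eqref{eq:G30} of Lemma \ref{th:tlinfcont} yields the desired uniform bound on $|r^L_S(z,\lambda)|$ (the decay rate depending only on the fixed $\alpha$ through $\gamma$, and on $K$ through $\inf K>0$).

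The second step is purely algebraic. Inserting the decomposition above into $H_3$ and exchanging the sum over $z$ with the nonnegative integration in $s$ (this is allowed by Tonelli), one obtains
\[
H_3=\frac{1}{\lambda}\sum_z\int_{0}^{\infty}e^{-\frac{\lambda s}{(2L)^d}}\Prob_S^{x_L,0}(T_L\in ds,X_s=z)+E_L(S,\lambda),
\]
where
\[
E_L(S,\lambda)=\sum_z r^L_S(z,\lambda)\int_{0}^{\infty}e^{-\frac{\lambda s}{(2L)^d}}\Prob_S^{x_L,0}(T_L\in ds,X_s=z).
\]
The main term rebuilds $\frac{1}{\lambda}F^L_S(x_L,\lambda/(2L)^d)$ via $\sum_z\Prob_S^{x_L,0}(T_L\in ds,X_s=z)=\Prob_S^{x_L,0}(T_L\in ds)$. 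Since the $s$-integrals are nonnegative with total mass $F^L_S(x_L,\lambda/(2L)^d)\le 1$,
\[
|E_L(S,\lambda)|\le \sup_{z\in\Lambda(L)}|r^L_S(z,\lambda)|\cdot F^L_S(x_L,\lambda/(2L)^d)\le \sup_{z\in\Lambda(L)}|r^L_S(z,\lambda)|,
\]
which by the first step can be made $\le\eps$ for $L$ large enough, uniformly in $S\in Q^L$, $x_L\in\Lambda(L)$ and $\lambda\in K$.

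The only subtle point is the uniformity in $z$ of the first-step estimate, but this is inherited directly from the max over $x,y$ in \eqref{eq:estim_cont1}, so it requires no additional work beyond observing that the bound $e^{-2\gamma q}$ is independent of the return point $z$. Everything else is routine manipulation of the Laplace transform.
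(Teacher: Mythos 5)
Your proof is correct and follows essentially the same route as the paper's: both decompose $\Prob_S^z(X_{2q}=z)$ into the equilibrium value $1/(2L)^d$ plus a deviation controlled uniformly in $z$ by \eqref{eq:estim_cont1}, integrate the equilibrium part to recover $1/\lambda$ up to a vanishing error, and use $F^L_S\le 1$ to absorb the remainder. The only (cosmetic) difference is that you package the inner $q$-integral's error as a single $z$-uniform quantity before summing, whereas the paper performs the same decomposition directly under the sum defining $H_3$.
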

\begin{proof}
Note that
\[\begin{split}
    H_3=&\sum_z\int_{t_L}^\infty \left(\Prob_S^{z}(X_{2q}=z)-\frac{1}{(2L)^d}\right)
e^{-\frac{\lambda q}{(2L)^d}}dq
\int_{0}^\infty
e^{-\frac{\lambda s}{(2L)^d}}
{\Prob_S^{x_L,0}}(T_L\in ds,X_s=z)\\
&+
\int_{t_L}^\infty e^{-\frac{\lambda q}{(2L)^d}}\frac1{(2L)^d}
F^L_S(x_L,\lambda /(2L)^d)dq,
  \end{split}
\]
and the modulus of the first member does not exceed
% and for sufficiently large $L$
\[
\int_{t_L}^\infty e^{-\frac{\lambda q}{(2L)^d}-2\gamma q}dq
\int_{0}^\infty
e^{-\frac{\lambda s}{(2L)^d}}
{\Prob_S^{x_L,0}}(T_L\in ds)\le
C \exp(-\gamma t_L)
\]
by \eqref{eq:estim_cont1} (recall that $S\in Q^L$ and the fact that $F^L_S(x_L,\lambda /(2L)^d)\le1$.
%Thus
%\begin{equation}\label{eq:limH3}
%H_3=
%\int_{t_L}^\infty \frac{1}{(2L)^d}e^{-\frac{\lambda q}{(2L)^d}}
%dq\cdot F^L_S(x_L,\lambda/(2L)^d)+o(1),
%\end{equation}
The claim follows since for $L$ sufficiently large
\[
 \left|\frac1{(2L)^d}\int_{t_L}^\infty e^{-\frac{\lambda q}{(2L)^d}}dq-\frac1\lambda\right|<\eps/2.
\]
\end{proof}

\begin{theorem}\label{th:Fquenchedcont}
Let 
$$
b_\lambda^L(S):=\left|F^L_S\left(x_L,\frac{\lambda}{(2L)^{d}}\right)-
\frac{\Gp_{\mathcal{B}}(x_L)+\frac{1}{\lambda}-\I_{\{0\}}(+(x_L))}{\Gp_{\mathcal{B}}(0)+
\frac{1}{\lambda}}\right|.
$$

(a) Then $b_\lambda^L\to 0$ in probability, for each $K \in \K$ such that $\sup K 
%_{\lambda\in K}\lambda
<\infty$, uniformly in 
$x_L\in\Lambda(L)$ and $\lambda \in K$, namely for all $\eps>0$ 
$(S:b_\lambda^L(S)\le\eps, \forall \lambda \in K)\supset Q^L\cap I(0,t_{L}^{2})\cap A^L_{\eps/2}(K)$ for all $L$ sufficiently
large ($A^L_\eps(K)$ was defined in Lemma~\ref{th:H2quenchedcont}).

(b) For all $\eps>0$, $(S:b_\lambda^L(S)\le\eps, \forall \lambda \in K)\supset Q^L\cap (S:d_{S}(0,x_{L})>t^{2}_L)\cap A^L_{\eps/2}(K)$ for all $L$ sufficiently large.
\end{theorem}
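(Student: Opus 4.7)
The approach is to solve algebraically for $F_S^L$ by combining the two independent expressions for $G_S^L$: the asymptotic formula $\Gp_{\mathcal{B}}(x_L)+1/\lambda$ furnished by Theorem~\ref{th:Gquenchedcont}, and the decomposition $G^L_S(x_L,\lambda/(2L)^d)=\I_{\{0\}}(+(x_L))+H_1+H_2+H_3$ recalled at the start of Section~\ref{sec:GFcont}.

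First I would show that, on the graph set described in (a) or (b),
\[
H_1+H_2+H_3=\Bigl(c_L(\lambda)+\tfrac{1}{\lambda}\Bigr)F_S^L\!\left(x_L,\tfrac{\lambda}{(2L)^d}\right)+o(1)
\]
uniformly in $\lambda\in K$ and in $x_L$, where $c_L(\lambda):=\int_0^{t_L}e^{-\lambda q/(2L)^d}\Prob_{\mathcal{B}}^0(X_{2q}=0)\,dq$. The sum $H_1+H_2$ is read off by pairing Lemma~\ref{th:H1cont} (whose hypotheses are met in both cases (a) and (b)) with Lemma~\ref{th:H2quenchedcont}, which on $A_{\eps/2}^L(K)$ turns the annealed estimate of Lemma~\ref{th:H2cont} into a quenched one. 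The term $H_3$ is handled by Lemma~\ref{th:H3cont} on $Q^L$. Each step contributes an additive error controllable by an arbitrarily small constant when $L$ is large.

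Next I would verify that $c_L(\lambda)\to\Gp_{\mathcal{B}}(0)$ uniformly in $\lambda\in K$: decomposing
\[
c_L(\lambda)-\Gp_{\mathcal{B}}(0)=-\int_0^{t_L}\!(1-e^{-\lambda q/(2L)^d})\Prob_{\mathcal{B}}^0(X_{2q}=0)\,dq-\int_{t_L}^{\infty}\!\Prob_{\mathcal{B}}^0(X_{2q}=0)\,dq,
\]
the first term is bounded by $(\sup K)\,t_L/(2L)^d\cdot\Gp_{\mathcal{B}}(0)$ and the second by the tail of an integrable function (integrability of $\Prob_{\mathcal{B}}^0(X_{2q}=0)$ follows from transience of the big-world walk, see Subsection~\ref{bigworld}); both tend to $0$ uniformly in $\lambda$. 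Substituting into the previous display and equating with Theorem~\ref{th:Gquenchedcont} yields
\[
\tfrac{1}{\lambda}+\Gp_{\mathcal{B}}(x_L)=\I_{\{0\}}(+(x_L))+\Bigl(\Gp_{\mathcal{B}}(0)+\tfrac{1}{\lambda}\Bigr)F_S^L\!\left(x_L,\tfrac{\lambda}{(2L)^d}\right)+o(1),
\]
and dividing by $\Gp_{\mathcal{B}}(0)+1/\lambda$, which is uniformly bounded below by $\Gp_{\mathcal{B}}(0)>0$ on $K$, gives the bound on $b_\lambda^L(S)$ claimed in (a) and (b).

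The main obstacle is maintaining uniformity in $\lambda\in K$ and in $x_L$ across all three approximation layers simultaneously --- the three $H_i$-lemmas, the convergence of $c_L$ to $\Gp_{\mathcal{B}}(0)$, and Theorem~\ref{th:Gquenchedcont} itself --- on the explicit intersection of quenched ``good'' graph events. The hypothesis $\sup K<\infty$ in part~(a) is used precisely to bound the integrand of $c_L$ uniformly in $\lambda$ and to keep $1/\lambda$ from degenerating; the alternative condition $d_S(0,x_L)>t_L^2$ in part~(b) substitutes for $S\in I(0,t_L^2)$ because when the walkers start at graph distance exceeding $t_L^2$ they cannot meet within time $t_L$ (up to a negligible Poisson tail), so the local identification of small-world and big-world dynamics is no longer needed.
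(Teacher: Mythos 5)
Your proposal is correct and follows essentially the same route as the paper's proof: both combine the quenched asymptotics of $G^L_S$ from Theorem~\ref{th:Gquenchedcont} with the decomposition $G^L_S=\I_{\{0\}}(+(x_L))+H_1+H_2+H_3$, invoke Lemmas~\ref{th:H1cont}, \ref{th:H2quenchedcont} and \ref{th:H3cont} on the stated intersection of good-graph events, and use the uniform convergence of $\int_0^{t_L}e^{-\lambda q/(2L)^d}\Prob^0_{\mathcal B}(X_{2q}=0)\,dq$ to $\Gp_{\mathcal B}(0)$ (the paper's equation~\eqref{eq:tmp}, obtained by dominated convergence under $\sup K<\infty$) before solving for $F^L_S$. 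Your treatment of part~(b) via the observation that \eqref{eq:thH11cont} also holds when $d_S(0,x_L)>t_L^2$ likewise matches the paper.
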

\begin{proof}  
\textit{(a)} Note that by the Dominated Convergence Theorem (recall \eqref{eq:GBevcont}),
since $\sup K=\lambda_0<\infty$, 
for each $\eps >0$ and $L>\widetilde{L}$ large enough  
\begin{align}
\left|\int_{0}^{t_L} e^{-\frac{\lambda q}{(2L)^d}}\Prob_{\mathcal{B}}^{0}(X_{2q}=0)dq-\Gp_{\mathcal{B}}(0)\right|<\eps.
\label{eq:tmp}
\end{align}
Consider
\[
G^L_S(x_L,\lambda/(2L)^d)-\I_{\{0\}}(x_L)-\left(\Gp_{\mathcal{B}}(0)+\frac{1}{\lambda}\right)
F^L_S(x_L,\lambda/(2L)^d).
\]
Writing $G^L_S(x_L,\lambda/(2L)^d)=\I_{\{0\}}(x_L)+H_1+H_2+H_3$, using
 Lemmas \ref{th:H1cont}, \ref{th:H2quenchedcont}, \ref{th:H3cont} and \eqref{eq:tmp} 
follows that the previous difference is smaller than $\eps$ when $L$ is sufficiently large and
 $S\in Q^L\cap I(0,t^2_L)\cap A^L_{\eps/2}(K)$. Note that all these three sets have probability
which converges to 1. By Theorem \ref{th:Gquenchedcont} we conclude that $b_\lambda^L$
goes to zero in probability.

\textit{(b)}  Since equation \eqref{eq:thH11cont} holds also when
$d_{S}(0,x_L)>t^2_{L}$, we have that
 $S \in Q^L\cap (S:d_{S}(0,x_L)>t^2_{L})\cap A^L_{\eps/2}(K)$. 
\end{proof}

\begin{theorem}\label{th:Fanncont}
For each $\eps>0$ and $K \in \K$ such that $\sup K<\infty$,
there exists $\widetilde{L}$ such that for each $L>\widetilde{L}$, $\lambda \in K$ and for each sequence $\{x_L\}_L$  such that $x_L\in \Lambda(L)$,
$$
\left|F^L\left(x_L,\frac{\lambda}{(2L)^{d}}\right)-
\frac{\Gp_{\mathcal{B}}(x_L)+\frac{1}{\lambda}-\mathbf 1_{\{0\}}(+(x_L))}{\Gp_{\mathcal{B}}(0)
+\frac{1}{\lambda}}\right|\leq \eps.
$$
\end{theorem}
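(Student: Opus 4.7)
The plan is to deduce the annealed statement from the quenched bound of Theorem~\ref{th:Fquenchedcont}(a) by averaging over the small worlds and controlling the negligible complement.

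First I would observe that for every $\lambda \in K$ and every $x_L \in \Lambda(L)$ the target quantity
\[
\psi_L(\lambda) := \frac{\Gp_{\mathcal{B}}(x_L) + \frac{1}{\lambda} - \mathbf 1_{\{0\}}(+(x_L))}{\Gp_{\mathcal{B}}(0) + \frac{1}{\lambda}}
\]
is bounded in absolute value by a constant depending only on $\Gp_{\mathcal{B}}(0)$ and $\inf K$: indeed \eqref{eq:Pbwubcont} gives $\Gp_{\mathcal{B}}(x_L) \le \Gp_{\mathcal{B}}(0)$, and the hypothesis $\inf K > 0$ keeps the denominator $\Gp_{\mathcal{B}}(0)+1/\lambda$ bounded below by a positive constant. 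Since also $F^L_S(x_L, \lambda/(2L)^d) \in [0,1]$, there is a universal constant $M'>0$ such that $|F^L_S(x_L, \lambda/(2L)^d) - \psi_L(\lambda)| \le M'$ for all $S$, $L$, $x_L$ and $\lambda \in K$.

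Second, for the prescribed $\eps > 0$ I introduce the set of ``good'' graphs
\[
G_L := Q^L \cap I(0, t_L^2) \cap A^L_{\eps/2}(K).
\]
By Proposition~\ref{th:isosw}, Proposition~\ref{th:xgood} (applied with $t = t_L^2 = (\log \log L)^2$, which satisfies $M^{4t} = o(L^d)$) and Lemma~\ref{th:H2quenchedcont}, each of the three intersecting events has $\mathbf P$-probability tending to $1$, so $\mathbf P(G_L^c) \to 0$ as $L \to \infty$. Theorem~\ref{th:Fquenchedcont}(a) then guarantees that for $L$ large enough and every $S \in G_L$, $|F^L_S(x_L, \lambda/(2L)^d) - \psi_L(\lambda)| \le \eps/2$ for all $\lambda \in K$ and all $x_L \in \Lambda(L)$.

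Finally, I would write the annealed transform as the average $F^L(x_L, \lambda/(2L)^d) = \sum_S \mathbf P(S)\, F^L_S(x_L, \lambda/(2L)^d)$, bring the modulus inside the sum, and split the resulting sum along $G_L$ and $G_L^c$:
\begin{align*}
\left| F^L\left(x_L, \tfrac{\lambda}{(2L)^d}\right) - \psi_L(\lambda) \right|
&\le \sum_{S \in G_L} \mathbf P(S) \cdot \tfrac{\eps}{2} + \sum_{S \in G_L^c} \mathbf P(S) \cdot M' \\
&\le \tfrac{\eps}{2} + M' \cdot \mathbf P(G_L^c),
\end{align*}
which is at most $\eps$ as soon as $L$ is large enough that $\mathbf P(G_L^c) \le \eps/(2M')$. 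The bound is uniform in $\lambda \in K$ and in the sequence $\{x_L\}_L$, as required. No step of this argument presents a genuine obstacle: all the delicate work (the asymptotic identification of the quenched Laplace transform and the $\mathbf P$-probability lower bounds on $Q^L$, $I(0, t_L^2)$ and $A^L_{\cdot}(K)$) has already been carried out earlier in the paper, and the present theorem is the clean probabilistic average of those results.
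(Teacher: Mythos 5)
Your proposal is correct and follows essentially the same route as the paper: average the quenched estimate of Theorem~\ref{th:Fquenchedcont}(a) over the small worlds, split the sum along the good set (which the paper writes as $Q^L_{\eps,\lambda}=\{S: b^L_\lambda(S)\le\eps\}$ and you write as $Q^L\cap I(0,t_L^2)\cap A^L_{\eps/2}(K)$, the set Theorem~\ref{th:Fquenchedcont}(a) shows is contained in it), and bound the contribution of the complement using the uniform boundedness of both $F^L_S$ and the limiting ratio. The only cosmetic difference is that the paper observes both quantities lie in $[0,1]$ (so the bad-set contribution is at most $2\mathbf P((Q^L_{\eps,\lambda})^c)$) where you use a generic constant $M'$.
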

\begin{proof}
To keep notation simple we deal only with the case $+(x_L)\neq0$ (the case
$+(x_L)=0$ is completely analogous). Let 
\begin{equation}
 Q_{\eps,\lambda}^L=\left\{S:%\left|F^L_S\left(x_L,\frac{\lambda}{(2L)^{d}}\right)-
%\frac{\Gp_{\mathcal{B}}^{even}(+(x_L),0)+\frac{1}{\lambda}}{\Gp_{\mathcal{B}}^{even}(0,0)+\frac{1}{\lambda}}\right|
b_\lambda^L\le\eps
\right\},
\label{Qepsilon}
\end{equation}
($b_\lambda^L$ was defined in Theorem \ref{th:Fquenchedcont}).
By Theorem \ref{th:Fquenchedcont} there exists $\widetilde L$ such that
for all $L\ge\widetilde L$ we have  $\mathbf P(Q_{\eps,\lambda}^L)>1-\eps$.\\
Then since both $F^L_S(x_L,{\lambda}/{(2L)^{d}})$
and $(\Gp_{\mathcal{B}}(x_L)+{1}/\lambda)/(\Gp_{\mathcal{B}}(0)+{1}/\lambda)$
are in $[0,1]$, for all $L\ge\widetilde L$
\[
   \sum_S \mathbf P(S)\left|F^L_S\left(x_L,\frac{\lambda}{(2L)^{d}}\right)-
\frac{\Gp_{\mathcal{B}}(x_L)+\frac{1}{\lambda}}{\Gp_{\mathcal{B}}(0)+\frac{1}{\lambda}}
	\right|\le 2\mathbf P((Q_{\eps,\lambda}^L)^c)+\eps\le 3\eps.
\]
\end{proof}
\begin{rem}
Clearly for all $\lambda>0$, if 
$$
\frac{\Gp_{\mathcal{B}}(x_L)+\frac{1}{\lambda}-\mathbf 1_{\{0\}}(x_L)}{\Gp_{\mathcal{B}}(0)
+\frac{1}{\lambda}}
$$ 
has a limit $f(\lambda)$ then by Theorem~\ref{th:Fanncont} we have 
that $F^L(x_L,{\lambda}/{(2L)^{d}})$ 
has limit $f(\lambda)$. 
%% Regarding $F^L_S$ we can have existence of the limit provided that the sequence
%% of small worlds $S$ is chosen wisely. Indeed let $K_n=[1/n,+\infty)$. 
%% For all $n$ we know that there exists $L_n$ such that 
%% $\mathbf P(S: b_\lambda^L (S) \le 1/n, \forall \lambda\in K_n)\ge 1-1/n$ for all $L\ge L_n$. 
%% Thus if for all $L\in [L_n,L_{n+1})$ we choose 
%% $S\in(S:b_\lambda^L (S) \le 1/n, \forall \lambda\in K_n)$
%% we get that also $F^L_S\left(x_L,\frac{\lambda}{(2L)^{d}}\right)$ 
%% has limit $f(\lambda)$ for all $\lambda>0$ (uniformly with respect to $x_L$ if 
%% $(\Gp_{\mathcal{B}}(x_L)+\frac{1}{\lambda}-\mathbf 1_{\{0\}}(+(x_L)))/(\Gp_{\mathcal{B}}(0)
%% +\frac{1}{\lambda})$ converges uniformly with respect to $x_L$).
\label{rem:limit}
\end{rem}
\begin{rem}
In discrete time one can show the same results with $2t_L$ instead of $t^2_L$ and constant
$\widetilde{\G}^{ev}_{\mathcal{B}}(x_L)$. 
As seen in the proof of Lemma~\ref{th:H1cont} the key of the proof in discrete time is that
two random walkers cannot meet before a time smaller than half of their initial distance
(while this is possible in continuous time, though it is unlikely that particles at initial
distance $t_L^2$ meet before time $t_L$). 
\label{rem_discreteG}
\end{rem}

\section{Meeting and hitting time of random walks}
\label{hit_time}

%\subsection{The limit in law of $T_L/(2L)^d$}
%\label{results}

It is clear that, if $\Gp_{\mathcal{B}}(x_L)$
has a limit as $L$ goes to infinity, then Theorems \ref{th:Fquenchedcont}
and \ref{th:Fanncont} provide the limits of
$F^L_S(x_L,{\lambda}/{(2L)^{d}})$ and $F^L(x_L,{\lambda}/{(2L)^{d}})$.
The limit of $\Gp_{\mathcal{B}}(x_L)$ exists for instance in two particular
cases: $x_L=x$ for all $L$ sufficiently large, or $|x_L|\to \infty$.
In the first case clearly $\lim_{L\to\infty}\Gp_{\mathcal{B}}(x_L)=\Gp_{\mathcal{B}}(x)$.
In the second case, $\Gp_{\mathcal{B}}(x_L)$ converges to $0$ by the Dominated Convergence Theorem. 
%$$
%\lim_{L\to \infty} \frac{1}{2}\Gp_{\mathcal{B}}(+(x_L))=\lim_{L\to %\infty}\frac{1}{2}\int_{0}^{\infty}\Prob^{+(x_L)}_{\mathcal{B}} (X_{2t}=0)dt=0,
%$$

\noindent \emph{Proof of Theorem \ref{th:main}.}
We prove the claim in continuous time. The proof in discrete time works in a similar way.
 \begin{enumerate}
\item By Theorem \ref{th:Fanncont} we know that for all $\lambda>0$
\begin{equation}\label{eq:limFL}
  F^L\left(x_L,\frac{\lambda}{(2L)^{d}}\right)\stackrel{L\to\infty}{\to}
\frac{\lambda \Gp_{\mathcal{B}}(x)+1-\lambda\mathbf 1_{\{0\}}(+(x))}{\lambda \Gp_{\mathcal{B}}(0)
+1}.
\end{equation}
Since for each $L$, $F^L$ is a monotone function of $\lambda$ and so is
the right hand side of \eqref{eq:limFL}, which is also continuous in $\lambda$, 
it follows that \eqref{eq:limFL} holds uniformly in $\lambda\ge0$.

Thus, if $x\neq0$, $T_L/(2L)^d$ converges in law (with respect to $\Prob^{x,0}$) to
\[
\frac{\Gp_{\mathcal{B}}(x)}{\Gp_{\mathcal{B}}(0)}\delta_0+
\left(1-\frac{\Gp_{\mathcal{B}}(x)}{\Gp_{\mathcal{B}}(0)}\right)
\exp\left(\frac 1{\Gp_{\mathcal{B}}(0)}\right),\]
while if $x=0$ then it converges to
\[
\left(1-\frac{1}{\Gp_{\mathcal{B}}(0)}\right)\delta_0+
\frac{1}{\Gp_{\mathcal{B}}(0)}
\exp\left(\frac 1{\Gp_{\mathcal{B}}(0)}\right).\]
Then \eqref{eq:T1} %and \eqref{eq:T2} 
holds, and by monotonicity it holds %they hold 
uniformly in $t\ge0$.
\item
It follows as in the previous step using the fact that
$\Gp_{\mathcal{B}}(x_L)\to0$ uniformly in $\{x_L\}_L$ such that $|x_L|\ge\alpha_L$.
Indeed $\Gp_{\mathcal{B}}(x_L)=\int_0^{\infty}\Prob^0_{\mathcal{B}}(X_{2t}=+(x_L))dt$ goes to 0 by the Dominated Convergence Theorem since
$\Prob^0_{\mathcal{B}}(X_{2t}=+(x_L))\le \Prob^0_{\mathcal{B}}(X_{2t}=0)$
and $\int_0^{\infty}\Prob^0_{\mathcal{B}}(X_{2t}=0)dt\leq \G_{\mathcal{B}}(0)<\infty$.
%Thus
%\[
% \Gp_{\mathcal{B}}^{even}(+(x_L),0)\le \sum_{n\ge\alpha_L/2} \Prob^0_{\mathcal{B}}(X_{2n}=0).
%\]

\item
By Theorem~\ref{th:Fquenchedcont} we know that for all $n$
there exists $L_n$ such that 
$\mathbf P(S: b_\lambda^L (S) \le 1/n, \forall \lambda\in [1/n,n])\ge 1-1/n$ for all $L\ge L_n$.
Clearly the sequence $\{L_n\}_{n\ge1}$ is nondecreasing and for any $L\in [L_{n},L_{n+1})$
we may define 
\[
 H^L:=(S: b_\lambda^L (S) \le 1/n, \forall \lambda\in [1/n,n]).
\]
By Theorem~\ref{th:Fquenchedcont}  we have that $\mathbf{P}(H^L)\stackrel{L \to \infty}{\to} 1$.
If for all $L$ we choose $S\in H^L$ then for all $\lambda>0$
\begin{equation*}
F^L_S\left(x_L,\frac{\lambda}{(2L)^{d}}\right)\stackrel{L\to\infty}{\to}
\frac{\lambda \Gp_{\mathcal{B}}(x)+1}{\lambda \Gp_{\mathcal{B}}(0)+1}.
\end{equation*}
This, by an argument as in step $1$, proves that
\[
\Prob ^{x_{L},0}_{S}\left(\frac{T_{L}}{(2L)^{d}}>t\right)\stackrel{L\to \infty}{\to}
\left(1-\frac{\Gp_{\mathcal{B}}(x)}{\Gp_{\mathcal{B}}(0)}\right)\exp\left(-\frac{t}{\Gp_{\mathcal{B}}(0)}\right),\]
uniformly in $t\geq 0$. Since this convergence holds whenever we choose
for all $L$, $S\in H^L$, the assertion follows.

%Let us consider the case $x\neq0$.
%Suppose that there exist $\eps$ and $\delta$ such that
%for all $L$ there exists $\widetilde L\ge L$ such that the
%probability of the event is larger than $\delta$. Then
%recalling that 
%\[
% \Prob^{x,0}\left( \frac{T_L}{(2L)^d}>t\right)=\sum_S \mathbf P(S)
%\Prob^{x,0}_S\left( \frac{T_L}{(2L)^d}>t\right),
%\]
%we have that
%\[\begin{split}
% &\left| \Prob^{x,0}\left( \frac{T_L}{(2L)^d}>t\right)
%-\left(1-\frac{\Gp_{\mathcal{B}}^{even}(+(x),0)}{\Gp_{\mathcal{B}}^{even}(0,0)}\right)
%\exp\left(-\frac t{\Gp_{\mathcal{B}}^{even}(0,0)}\right)\right|\\
%&=\sum_S \mathbf P(S)
%\left|\Prob^{x,0}_S\left( \frac{T_L}{(2L)^d}>t\right)
%-\left(1-\frac{\Gp_{\mathcal{B}}^{even}(+(x),0)}{\Gp_{\mathcal{B}}^{even}(0,0)}\right)
%\exp\left(-\frac t{\Gp_{\mathcal{B}}^{even}(0,0)}\right)\right|,
%\end{split}
%\]
%which is larger than $\eps\delta$ infinitely often, whence the contradiction with statement 1.
%Similarly one proves the case $x=0$.

\item Choosing $S\in H^L$ as in previous step, uniformly with respect to $\{x_{L}\}_L$ such that either $|x_{L}|\geq \alpha_{L}$ or $d_{S}(0,x_{L})\geq \alpha_{L}$ we get
\[
\Prob ^{x_{L},0}_{S}\left(\frac{T_{L}}{(2L)^{d}}>t\right)\stackrel{L\to \infty}{\to}\exp\left(-\frac{t}{\Gp_{\mathcal{B}}(0)}\right),\]
uniformly in $t\geq 0$. This proves the claim.
%\item Let $K=[1/n,\infty)\in \mathcal{K}$. By Theorem $\ref{th:Fquenched}$, for all $n$ that there exists $L_n$ such that $\mathbf P(S:b_\lambda^L(S)\le 1/n)>1-1/n$ for all $L\ge L_n$ and $\lambda\in K$. Thus, if for all $L\in [L_n,L_n+1)$ we choose $S^L \in (b_\lambda^L(S)\le 1/n)$ and since $\Gp_{\mathcal{B}}^{even}(+(x_{L}),0)$ converges to $0$ uniformly in $\{x_L\}_L$ such that $d_{S}(x_{L},0)\geq \alpha_{L}$ (see Step $2$),  we get 
%\begin{equation}
%F^{S^L}_L\left(x_L,\frac{\lambda}{(2L)^{d}}\right)\stackrel{L\to\infty}{\to}
%\frac{1}{\lambda \Gp_{\mathcal{B}}^{even}(0,0)+1}.
%\end{equation}
%for all $\lambda>0$. This means that for each sequence $(S^L)_{L}$ such that $S^L \in (b_\lambda^L(S)\le 1/n)$, which is a set with probability as large as we want as $L$ goes to infinity, then $T_L/(2L)^d$ converges in law to
%\[
%Exp\left(\frac1{\Gp_{\mathcal{B}}^{even}(0,0)}\right),\]
%uniformly with respect to $x_L$ such that $d_{S}(x_{L},0)\geq \alpha_{L}$, then \eqref{eq:qlim} holds.
\end{enumerate}
\hfill $\square$

\begin{rem}
Theorem \ref{th:main}.4 holds if we fix $0\in \Lambda(L)$ and we consider the supremum over all possible 
$x_{L}\in \Lambda(L)$ such that $d_{S}(x_{L},0)\geq \alpha_{L}$. 
We can repeat the same proof to show that the result still holds if we take 
the supremum over all possible pairs $(x_{L},y_{L})\in \Lambda(L)\times\Lambda(L)$ 
such that $d_{S}(x_{L},y_{L})\geq \alpha_{L}$. Namely, let $\alpha_L>t_L^2$ %and $t\ge0$, 
then for all $\eps>0$
\begin{equation}
  \mathbf P\left(S%\in\mathcal S^L({\Omega})
: \sup_{(x_{L},y_{L})\in \Lambda(L)^{2}:d_{S}(x_{L},y_{L})\geq \alpha_{L}}
\left|\Prob_S^{x_L,y_{L}}\left(\frac{T_L}{(2L)^d}>t\right)\nonumber\\
-g(t)\right|%\exp\left(-\frac {t}{\Gp_\mathcal{B}(0)}\right)\Big|
<\eps,\forall t\ge0\right)\stackrel{L\to\infty}{\to}1.
\label{eq:qlim_gen}
\end{equation}
\label{rem_unif}
\end{rem}

We observe that the same technique we employed to determine the
asymptotic behaviour of the first encounter time of two random walkers,
one starting at $x_L$ and the other at 0, may be used to obtain similar
results for the first time that a single random walker starting at $x_L$
hits 0. 

\begin{theorem}
Let $W_L$ be the first time that a random walk starting at $x_L$
hits 0 either in discrete or in continuous time. Then Theorem \ref{th:main} still holds with constant  $\G_{\mathcal{B}}(x)$ instead of $\Gp_{\mathcal{B}}(x)$.
\label{th:1walk}
\end{theorem}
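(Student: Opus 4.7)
The plan is to replicate the analysis of Sections \ref{hitting_time} and \ref{hit_time} with the single-walker hitting time replacing the meeting time. For a small world $S$, define
\[
G^L_{1,S}(x,\lambda) := \int_{0}^{\infty} e^{-\lambda t}\,\Prob_S^x(X_t = 0)\,dt,
\qquad
F^L_{1,S}(x,\lambda) := \int_{0}^{\infty} e^{-\lambda t}\,\Prob_S^x(W_L \in dt),
\]
together with their annealed counterparts $G^L_1$ and $F^L_1$ obtained by averaging against $\mathbf P$. These are the exact analogues of the transforms of Section \ref{hitting_time} but involve a single rate-$1$ walker, so the natural big-world constant is
\[
\G_{\mathcal B}(x)=\int_{0}^{\infty}\Prob_{\mathcal B}^{x}(X_t=0)\,dt=2\Gp_{\mathcal B}(x),
\]
rather than $\Gp_{\mathcal B}(x)$; this is exactly what accounts for the swap of constants in the statement.

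First I would establish the single-walker versions of Lemmas \ref{th:qisocont} and \ref{th:tlinfcont} and of Theorems \ref{th:Gquenchedcont} and \ref{th:Ganncont}. Every ingredient used there is insensitive to whether the integrand is $\Prob_S^x(X_{2t}=0)$ or $\Prob_S^x(X_t=0)$: the contribution of small worlds outside $Q^L_\alpha$ is again killed by Proposition \ref{th:isosw}; on $Q^L_\alpha$, Proposition \ref{th:conv-estim} forces the tail from $t_L$ to $\infty$ to produce $1/\lambda$; on the event $I(0,t_L^2)$, Proposition \ref{th:xgood} together with the Chebyshev bound on the Poisson clock (exactly as in Theorem \ref{th:Gquenchedcont}) lets us replace the small-world walk by the big-world walk on $[0,t_L]$; and the tail $\int_{t_L}^\infty \Prob^{+(x)}_{\mathcal B}(X_t=0)\,dt$ is negligible by transience of $\mathcal B$ and dominated convergence. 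This gives, in both the quenched and annealed senses,
\[
G^L_{1,S}\!\left(x_L,\tfrac{\lambda}{(2L)^d}\right)\longrightarrow \frac{1}{\lambda}+\G_{\mathcal B}(x)
\]
when $x_L=x$ eventually, and the corresponding statement with $\G_{\mathcal B}(x_L)\to 0$ when $|x_L|\to\infty$ or $d_S(0,x_L)\to\infty$.

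Next I would adapt Section \ref{sec:GFcont} to relate $G^L_{1,S}$ to $F^L_{1,S}$ via the strong Markov property at $W_L$. For $x_L\neq 0$,
\[
G^L_{1,S}\!\left(x_L,\tfrac{\lambda}{(2L)^d}\right)=F^L_{1,S}\!\left(x_L,\tfrac{\lambda}{(2L)^d}\right)\cdot G^L_{1,S}\!\left(0,\tfrac{\lambda}{(2L)^d}\right),
\]
and for $x_L=0$ the same identity holds after subtracting the occupation-time contribution at $0$ before the first jump, which is $(1+\lambda/(2L)^d)^{-1}\to 1$. Splitting $G^L_{1,S}$ into the single-walker analogues of $H_1$, $H_2$, $H_3$ with the cutoff at $t_L$, the bounds of Lemmas \ref{th:H1cont}--\ref{th:H3cont} go through unchanged: on $I(0,t_L^2)$ the walker can be coupled to the big-world walk up to time $t_L$; for intermediate times the mixing estimate \eqref{eq:estim_cont1} from Proposition \ref{th:conv-estim} controls the deviation from the uniform measure; and the bound $F^L_{1,S}\le 1$ closes the argument. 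The quenched and annealed analogues of Theorems \ref{th:Fquenchedcont} and \ref{th:Fanncont} then give
\[
F^L_{1,S}\!\left(x_L,\tfrac{\lambda}{(2L)^d}\right)\longrightarrow \frac{\G_{\mathcal B}(x)+\frac{1}{\lambda}-\ident_{\{0\}}(+(x))}{\G_{\mathcal B}(0)+\frac{1}{\lambda}}.
\]

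Finally, as in the proof of Theorem \ref{th:main}, monotonicity and continuity in $\lambda$ upgrade Laplace-transform convergence to convergence in law of $W_L/(2L)^d$, producing the four statements with $\G_{\mathcal B}$ in place of $\Gp_{\mathcal B}$. The main (and only) genuine point requiring care is the decomposition at $x_L=0$: one must verify that the ``occupation at $0$ before first jump'' correction, together with the issue that $\Prob_S^0(W_L\in dt)$ may include an atom at $0^+$ depending on the convention adopted for $W_L$, contributes at most $O(1/(2L)^d)$ after rescaling $\lambda\mapsto\lambda/(2L)^d$ and is therefore negligible uniformly on compact sets of $\lambda$. Every other step is a notational adaptation of the arguments already carried out for the meeting time.
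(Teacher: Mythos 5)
Your proposal is correct and is, in substance, the same argument the paper intends: the paper's own proof of Theorem~\ref{th:1walk} is a two-line remark saying that one repeats the proof of Theorem~\ref{th:main} for a single walker, with the constant becoming the expected number of visits to $0$ (equivalently the expected time spent at $0$ at speed $1$), i.e.\ $\G_{\mathcal B}(x)=2\Gp_{\mathcal B}(x)$ — exactly the adaptation you carry out. The one genuine difference in route is that the paper proves the discrete-time statement first and then transfers it to continuous time by a Slutsky-type argument, whereas you rerun the continuous-time Laplace-transform machinery of Sections~\ref{hitting_time}--\ref{hit_time} directly; since those sections are themselves written in continuous time, your route arguably requires less new argument (no transfer step), at the cost of having to re-examine the Poisson-clock estimates and the renewal identity $G^L_{1,S}=F^L_{1,S}\cdot G^L_{1,S}(0,\cdot)$ for a single walker. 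You are also right to flag the $x_L=0$ bookkeeping as the only delicate point: the ``occupation before the first jump'' correction is $(\lambda/(2L)^d+r)^{-1}$ with $r$ the rate of leaving $0$, which tends to $1$ exactly as needed to reproduce the $-\I_{\{0\}}(+(x))$ term in the limit; this is the same (slightly glossed-over) issue already present in the paper's treatment of $G^L_S(0,\cdot)$ for the meeting time, and it does not affect the limit.
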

\begin{proof}
\textit{(Discrete time)} The proof is analogous to the one of Theorem \ref{th:main} but easier, since we consider the return time of one single walk. Notice that the constant is the expected number of visits to $0$ of the discrete time random walk on the big world starting at $0$.\\  
\textit{(Continuous time)}
A standard approach (for instance use Slutsky theorems) allows to get the result starting from the one in discrete time. 
\end{proof}
\begin{rem}
As a corollary of Theorems \ref{th:main} and \ref{th:1walk} one can get a similar convergence 
result for random walkers starting from the stationary distribution $\pi$. The key is that the 
initial distance between the random walk and the origin 
(respectively between two random walks) is larger than $t^2_L$ with probability which converges 
to $1$ as $L$ goes to infinity, so that we are under hypothesis of 
Theorem \ref{th:main} either $2)$, in the annealed case, or $4)$ in the quenched one.   
%We get (\ref{eq:pi1walk}) (respectively (\ref{eq:pimain})) as an application of Theorem \ref{th:1walk} (2) (respectively Theorem  \ref{th:main} (2) and Remark \ref{rem_unif}). One can prove (\ref{eq:pi1walk_q}) (respectively (\ref{eq:pimain_q})) in a similar way by using Theorem \ref{th:1walk} $(4)$ for the hitting time of a single walker (respectively Theorem \ref{th:main} $(4)$ and Remark $\ref{rem_unif}$ for the meeting time of two random walks). 
\label{cor1walk}
\end{rem}

\section{Coalescing random walk on small world}
\label{coalescing}

The goal of this section is to prove a convergence result for coalescing random walk of $n$ particles 
on the small world. From now on we work on the continuous time process.

Let $\mathcal{I}(n)=\{\{x_{1},\ldots,x_{n}\}: x_{i} \in \Lambda(L), x_{i}\neq x_{j}\}$. 
Given $A \in \mathcal{I}(n)$, let $\{(X^{S}_{t}(x_{i}))_{t \geq 0}\}_{x_i\in A}$ be a family of
 independent random walks on small world $S \in \mathcal{S}^{L}$ such that $X^{S}_{0}(x_{i})=x_{i}$
and transition ruled by $P_S$ (recall Definition~\ref{def:law}). In the sequel we will drop the
superscript $S$ and simply write $X_t(x_i)$. 
We define for each $(x_{i},x_{j})\in \Lambda(L)\times \Lambda(L)$ and $S\in \mathcal{S}^{L}$
\begin{align*}
\tau(i,j):=&\inf \{s>0:X^{S}_{s}(x_{i})=X^{S}_{s}(x_{j})\}
\end{align*}
and for each $A\in \mathcal{I}(n)$
\begin{align*}
\tau(A):=&\inf_{\{x_i,x_j\}\subseteq A}\{\tau(i,j)\}.
\end{align*}
Let $\{\xi^{S}_{t}(A)\}_{t \geq 0}$ be the coalescing random walk starting from 
$A\in \mathcal{I}(n)$ on $S\in \mathcal{S}^{L}$, that is the process of $n$ independent 
random walks subjected to the rule that when two particles reach the same site they coalesce 
to one particle. 
Given a probability measure $\mu$ on $\Lambda(L)^{n}$, we denote by $\PP^{\mu}_{S}$ the law of 
the coalescing random walk on $S$ with initial probability $\mu$ and transitions ruled by $P_{S}$. 
If $\mu=\delta_{A}$ with $|A|=n$, we write $\PP^{A}_{S}$.

Let $|\xi^{S}_{t}(A)|$ be the number of particles of $\xi^{S}_{t}(A)$ at time $t$. 
When not necessary we omit the dependence on $S$ and we simply write $\{\xi_t(A)\}_{t\geq 0}$, 
$X_{t}(x_{i})$, $\tau(i,j)$ and $\tau(A)$.\\
%We fix $A\in \mathcal{I}(n)$, $S\in \mathcal{S}^{L}$ and we move $n$ random walks starting from $x_{i} \in A$, $i=1,\ldots,n$ independently until $\tau_{S}(A)$, that is until a couple reaches the same site. After the meeting, the two particles  coalesce and move together. We remain with $n-1$ walks that move independently until two of them . The time that all particles need to coalesce into one is the \textit{coalescing time} and the process is the \textit{coalescing random walk}.\\
%Let $\{\xi^{S}_{t}(A)\}_{t \geq 0}$ be the coalescing random walk starting from $A\in \mathcal{I}(n)$ on $S\in \mathcal{S}^{L}$ and 
The Kingman's coalescent is a Markov process $(D_{t})_{t\geq 0}$ on $\{0,1,\ldots,n\}$ with transition mechanism
$$
n\to n-1 \text{ at rate }\binom{n}{2}.
$$
The law $P_{n}(D_{t}=k)=q_{n,k}(t)$ is given by
\begin{align*}
q_{n,k}(t)=&\sum_{j=k}^{n}\frac{(-1)^{j+k}(2j-1)(j+k-2)!\binom{n}{j}}{k!(k-1)!(j-k)!\binom{n+j-1}{j}}\exp\left(-t\binom{j}{2}\right);\\
q_{\infty,k}(t)=&\sum_{j=k}^{\infty}\frac{(-1)^{j+k}(2j-1)(j+k-2)!}{k!(k-1)!(j-k)!}\exp\left(-t\binom{j}{2}\right).
\end{align*}
see for instance \cite{cf:Cox}, \cite{cf:Tavare}.\\ 
We define 
\begin{align}
\mathcal{A}^{L}(h,n):=&\left\{A\in \mathcal{I}_{n}:d(x_{i},x_{j}) > h, \text{ for all } i\neq j\right\}
\label{ALn}\\
\mathcal{A}_{S}^{L}(h,n):=&\left\{A\in \mathcal{I}_{n}: d_{S}(x_{i},x_{j})> h,  \text{ for all } i \neq j\right\}
\label{ASLn}\
\end{align}
the set of $n$-uples with distance larger than $h$ respectively on $\Lambda(L)$ and on a fixed small world $S$. 
Notice that $\mathcal{A}^{L}_{S}(h,n)\subseteq \mathcal{A}^{L}(h,n)$ for all $S \in \mathcal{S}^L$. 
Given $A\in \mathcal{A}^{L}(h,n)$, we introduce
\begin{align}
\mathcal{D}(A):=&\left\{S \in \mathcal{E}^L: A \in \mathcal{A}^{L}(h,n)\setminus \mathcal{A}_{S}^{L}(h,n)\right\}.
\label{D}
\end{align} 
Remember that we focus on the nearest neighbour case, but all results can be extended to the case with neihbourhood structure given by $\mathcal{N}^{\infty}_{m}$.\\

We begin from $n$ particles in $A\in \mathcal{A}^{L}(h,n)$. We prove that by taking a particular $h:=h_{L}$ and $L$ large we get that $A \in \mathcal{A}_{S}^{L}(h,n)$ with large probability. We assume
\begin{align}
i)\quad h_{L}\geq t^2_{L} \qquad ii)\lim_{L \to \infty}\frac{M^{4h_{L}}}{(2L)^{d}}=0 
\label{ip}
\end{align}
where $M=(2m+1)^{d}$ or $M=2d+1$ depending on the neighbourhood structure we work with. Note that hypothesis \eqref{ip} are satisfied if $h_{L}=t^2_{L}$. 
\begin{lem}
If \eqref{ip} holds, for each $n<\infty$, $\eps>0$ there exists $\widetilde{L}$ such that for each $L>\widetilde{L}$ and $A\in \mathcal{A}^{L}(h_{L},n)$,
\begin{equation*}
\mathbf P\left(\mathcal{D}(A)\right)<\eps.
\end{equation*}
\label{lemD}
\end{lem}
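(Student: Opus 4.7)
The plan is a direct union-bound argument based on Proposition \ref{th:stimedS}(b). Recall that $\mathcal{D}(A)$ consists of small worlds $S$ for which some pair of points of $A$, at Euclidean distance larger than $h_L$, ends up at graph distance at most $h_L$ on $S$. Hence
\[
\mathcal{D}(A)\subseteq \bigcup_{\{x_i,x_j\}\subseteq A}\bigl\{S:d_S(x_i,x_j)\le h_L\bigr\},
\]
and it suffices to bound each summand by $o(1)$ uniformly in the pair, then apply a union bound over the $\binom{n}{2}$ pairs (note $n$ is fixed).

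First I would observe that the law $\mathbf{P}$ on partitions is invariant under translations of the torus: the uniform distribution on matchings of $\Lambda(L)$ is permutation-invariant, and translations are permutations of $\Lambda(L)$ that commute with the short range structure. Consequently, for every pair $(x_i,x_j)$ with $d(x_i,x_j)>h_L$, the distribution of $d_S(x_i,x_j)$ coincides with that of $d_S(0,x_j-x_i)$ with $d(0,x_j-x_i)>h_L$. Hypothesis \eqref{ip}(ii) ensures that $t(L):=h_L$ satisfies $M^{4t(L)}=o(L^d)$, which is precisely the condition required in Proposition~\ref{th:xgood} and hence in Proposition~\ref{th:stimedS}. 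Applying Proposition~\ref{th:stimedS}(b) yields, for $L$ large,
\[
\mathbf{P}\bigl(d_S(x_i,x_j)\le h_L\bigr)\le \frac{C M^{4h_L}}{L^d}.
\]

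Summing over the $\binom{n}{2}$ pairs gives
\[
\mathbf{P}(\mathcal{D}(A))\le \binom{n}{2}\frac{C M^{4h_L}}{L^d},
\]
and the right-hand side tends to $0$ as $L\to\infty$ by \eqref{ip}(ii), uniformly over $A\in \mathcal{A}^L(h_L,n)$ (the bound depends only on $n$ and $h_L$, not on the specific locations of the points). For any $\eps>0$ one chooses $\widetilde L$ so that the right-hand side is below $\eps$ whenever $L>\widetilde L$, concluding the proof.

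There is no real obstacle here: the work was already done in Proposition~\ref{th:stimedS}, and this lemma is essentially a packaging of that estimate, turning a pairwise bound into a bound over all $\binom{n}{2}$ pairs via translation invariance and a union bound. The only small point to keep in mind is that the hypothesis $h_L\ge t_L^2$ from \eqref{ip}(i) is not needed for this particular statement (only (ii) is used), though it is essential for later arguments in the proof of Theorem~\ref{npart}.
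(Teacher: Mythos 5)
Your proof is correct and follows essentially the same route as the paper: a union bound over the $\binom{n}{2}$ pairs combined with the estimate \eqref{eq:stima>02} of Proposition~\ref{th:stimedS}(b) applied with $t=h_L$, which is legitimate since \eqref{ip}(ii) is exactly the growth condition required there. Your additional remarks (translation invariance of $\mathbf P$ to reduce a general pair to the pair $(0,x)$, and the observation that only \eqref{ip}(ii) is needed) are accurate refinements of points the paper leaves implicit.
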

\begin{proof}
Let $A \in \mathcal{A}^{L}(h_{L},n)$. If $S\in \mathcal{D}(A)$, then there exists at least one pair of elements $(x_{i},x_{j})\in A\times A$, $i \neq j$, such that $d_{S}(x_{i},x_{j})<h_{L}$. 
By \eqref{D} and \eqref{eq:stima>02} 
\begin{align*}
\mathbf P\left(\mathcal{D}(A)\right)=& \mathbf P\left(S\in \mathcal{E}^{L}: \exists (x_{i},x_{j})\in A\times A: d_{S}(x_{i},x_{j})
\leq h_{L}\right)\leq  n^{2}
\frac{CM^{4h_{L}}}{L^{d}}.
%\leq & \sum^{n}_{i=1}\sum^{n}_{j=1,j\neq i}\mathbf P\big(S\in \mathcal{E}^{L}: d_{S}(x_{i},x_{j})\leq h_{L}, (x_{i},x_{j})\in A\times A\big)\\
\end{align*}
Since $n$ is fixed, the claim follows by \eqref{ip} $(ii)$.
\end{proof}
Therefore given $A \in \mathcal{A}^{L}(h_{L},n)$ with large probability $A\in \mathcal{A}^{L}_{S}(h_{L},n)$.\\
By Remark \ref{rem_unif}, if $\alpha_{L}\geq t_{L}^2$, there exists a sequence $\{\widetilde{H}^L\}_{L}$ with $\widetilde{H}^L\subseteq \mathcal{S}^{L}$ such that $\mathbf P(\widetilde{H}_L)\stackrel{L\to \infty}{\to}1$ and for each sequence $\{S^L\}_{L}$ with $S^L \in \widetilde{H}^L$  
\begin{equation}
\sup_{(x_{L},y_{L}):d_{S}(x_{L},y_{L})\geq \alpha_{L}}\left|\Prob_S^{x_L,y_{L}}\left(\frac{T_L}{(2L)^d}>t\right)
-\exp\left(-\frac {t}{\Gp_{\mathcal{B}}(0)}\right)\right|\stackrel{L \to \infty}{\to}0
\label{seq_H}
\end{equation}
Note that \eqref{seq_H} still holds for the sequence $\{Q^L\cap \widetilde{H}^L\}_{L}$ and $\mathbf P(Q^L\cap \widetilde{H}_L)\stackrel{L\to \infty}{\to}1$. Let $H^L:=\widetilde{H}^L\cap Q^L$.\\

The following lemma states that, starting from $4$ particles in a set of small world with large probability, when two particles meet the others are distant.
\begin{lem}
Assume \eqref{ip}. For each $\eps>0$ there exists $\widetilde{L}$ such that for each $L>\widetilde{L}$, $S \in H^L$ and $A \in \mathcal{A}^{L}_{S}(h_{L},4)$,
\begin{align}
&\int_{0}^{\infty}\mathbb{P}^{A}_{S}\left(\tau(1,2)\in ds, d_{S}(X_{s}(x_{1}),X_{s}(x_{3}))\leq h_{L}\right)<\eps, \label{inc1}\\
\nonumber\\
&\int_{0}^{\infty}\mathbb{P}^{A}_{S}\left(\tau(1,2)\in ds, d_{S}(X_{s}(x_{3}),X_{s}(x_{4}))\leq h_{L}\right)<\eps. \label{inc2}
\end{align}
\label{inc}
\end{lem}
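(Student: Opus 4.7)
The decisive observation is that walks $3$ and $4$ are independent of walks $1$ and $2$, so conditionally on $\tau:=\tau(1,2)$ and on the meeting location $Z:=X_\tau(x_1)$, the variables $X_\tau(x_3)$ and $X_\tau(x_4)$ are distributed according to the single-walker laws $\Prob^{x_3}_S(X_\tau\in\cdot)$ and $\Prob^{x_4}_S(X_\tau\in\cdot)$. The plan is to split the range of integration at the threshold $K_L:=M^{4h_L}$, which by \eqref{ip}$(ii)$ satisfies $K_L/(2L)^d\to 0$, while at the same time $K_L$ far exceeds the mixing time $O(1/\gamma)$ (because $h_L\to\infty$).

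For the contribution from $\{\tau\le K_L\}$, I would bound the joint event trivially by $\{\tau\le K_L\}$ and use that the law of $\tau$ is nearly exponential on the scale $(2L)^d$. Since $d_S(x_1,x_2)>h_L\ge t_L^{2}$ and $S\in H^L$, Remark~\ref{rem_unif} (with $\alpha_L=h_L$) gives, uniformly in such pairs $(x_1,x_2)$,
\[
\Prob^{x_1,x_2}_S(\tau\le K_L)\ \le\ 1-g\!\left(\frac{K_L}{(2L)^d}\right)+o(1),
\]
with $g(s)=\exp(-s/\Gp_{\mathcal B}(0))$. Because $K_L/(2L)^d\to 0$, we have $g(K_L/(2L)^d)\to 1$, so this contribution is $o(1)$.

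For the contribution from $\{\tau>K_L\}$, I would apply Proposition~\ref{th:conv-estim} (valid since $H^L\subseteq Q^L$): $|\Prob^{x_3}_S(X_s=y)-1/(2L)^d|\le e^{-\gamma s}$. Combined with the ball-size bound $|B_{\mathcal S}(z,h_L)|\le 3M^{h_L}$ (the same tree estimate used in the proof of Proposition~\ref{th:xgood}), this yields, uniformly in $z$ and $x_3$,
\[
\Prob^{x_3}_S\bigl(X_s\in B_{\mathcal S}(z,h_L)\bigr)\ \le\ \frac{3M^{h_L}}{(2L)^d}+3M^{h_L}\,e^{-\gamma s}.
\]
Integrating this estimate against $\Prob^{x_1,x_2}_S(\tau\in ds)$ over $s>K_L$ (using the independence of walk $3$ from walks $1,2$) bounds that part by $3M^{h_L}/(2L)^d+3M^{h_L}e^{-\gamma K_L}$; the first term vanishes by \eqref{ip}$(ii)$, and the second because $\gamma M^{4h_L}$ dominates $h_L\log M$ as $L\to\infty$. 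This proves \eqref{inc1}; for \eqref{inc2} the same dichotomy applies, since by independence of walks $3$ and $4$
\[
\Prob^{x_3,x_4}_S\bigl(d_S(X_s(x_3),X_s(x_4))\le h_L\bigr)=\sum_z \Prob^{x_3}_S(X_s=z)\,\Prob^{x_4}_S\bigl(X_s\in B_{\mathcal S}(z,h_L)\bigr),
\]
and the mixing bound above, uniform in $z$, gives exactly the same estimate on the inner probability; summing then yields the same upper bound.

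The main delicate point is the uniformity in $A\in\mathcal A^L_S(h_L,4)$ of the estimate $\Prob^{x_1,x_2}_S(\tau\le K_L)\to 0$: this is precisely what the supremum in Remark~\ref{rem_unif} provides, once we have the freedom to take $\alpha_L=h_L\ge t_L^{2}$. The choice $K_L=M^{4h_L}$ is the natural scale, being simultaneously much larger than the mixing time (so walk $3$ equilibrates by time $K_L$) and much smaller than the typical meeting time $(2L)^d$ (so walks $1,2$ have not had time to meet).
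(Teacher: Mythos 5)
Your proposal is correct and follows essentially the same route as the paper: split the time integral at an intermediate scale that is $o((2L)^d)$ (so that walks $1,2$ have almost surely not yet met, by the uniform exponential limit on $H^L$) but large compared to the mixing time (so that walk $3$, resp.\ walks $3,4$, are within $e^{-\gamma s}$ of uniform, and a $d_S$-ball of radius $h_L$ carries mass at most of order $M^{h_L}/(2L)^d$, which vanishes by \eqref{ip}). The only cosmetic difference is the choice of cut point ($M^{4h_L}$ instead of the paper's $\frac{d}{\gamma}\log(2L)$); both choices satisfy the two required asymptotic conditions.
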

\begin{proof}
We prove \eqref{inc1}; \eqref{inc2} can be proved in a similar way.  We split the integral in two 
parts. By Theorem \ref{th:main}.4 and by \eqref{ip} $(ii)$, for each $\eps>0$ there exists 
$\widetilde{L}$ such that for each $L>\widetilde{L}$
\begin{align}
\int_{0}^{\frac{d}{\gamma}\log (2L)}&\mathbb{P}^{A}_{S}\left(\tau(1,2)\in ds, d_{S}(X_{s}(x_{1}),X_{s}(x_{3}))\leq h_{L}\right)
\leq\int_{0}^{\frac{d}{\gamma}\log (2L)}\mathbb{P}^{A}_{S}\left(\tau(1,2)\in ds \right)\nonumber \\
%&=\mathbb{P}^{x_{1},x_{2}}_{S}\Big(T_{L} \leq \frac{d}{\gamma}\log (2L)\Big)=\mathbb{P}_{S}^{x_{1},x_{2}}\Big(\frac{T_{L}}{(2L)^{d}} \leq \frac{d\log (2L)}{\gamma(2L)^{d}}\Big)\nonumber \\
&=1-\exp\left({-\frac{d \log (2L)}{\gamma \Gp_{\mathcal{B}}(0)(2L)^{d}}}\right)+\eps/6 <\eps/3,
\label{I0}
\end{align}
where $\gamma$ is given by \eqref{eq:estim_cont1} and does not depend on $S$ since we are choosing
$S\in Q^L$.
The second part is 
\begin{align*}
&\int_{\frac{d}{\gamma}\log (2L)}^{\infty}
\mathbb{P}^{A}_{S}\left(\tau(1,2)\in ds, d_{S}(X_{s}(x_{1}),X_{s}(x_{3}))\leq h_{L}\right)\\
%%&= \int_{\frac{d}{\gamma}\log (2L)}^{\infty}\sum_{y\in \Lambda(L)}\mathbb{P}^{A}_{S}\big(\tau(1,2)\in ds, X_{s}(x_{1})=y)\sum_{z:d_{S}(y,z)\leq h_{L}}\mathbb{P}^{A}_{S}(X_{s}(x_{3})=z\big)\\
&\leq \int_{\frac{d}{\gamma}\log (2L)}^{\infty}\sum_{y\in \Lambda(L)}
\mathbb{P}^{A}_{S}\left(\tau(1,2)\in ds, X_{s}(x_{1})=y\right)\sum_{z:d_{S}(y,z)\leq h_{L}}
\left|\mathbb{P}^{x_{3}}_{S}(X_{s}=z)-\frac{1}{(2L)^{d}}\right|\\
&+\int_{\frac{d}{\gamma}\log (2L)}^{\infty}\sum_{y\in \Lambda(L)}
\mathbb{P}^{A}_{S}\left(\tau(1,2)\in ds, X_{s}(x_{1})=y\right)\sum_{z:d_{S}(y,z)\leq h_{L}}\frac{1}{(2L)^{d}}:=I(1)+I(2).
\end{align*}
Since the number of sites $z$ such that $d_{S}(y,z)\leq h_{L}$ is at most $M^{h_{L}}$ for each $y \in \Lambda(L)$, for each $L$ large enough we get
\begin{align}
I(2) \leq & %\int_{\frac{d}{\gamma}\log (2L)}^{\infty}\sum_{y\in \Lambda(L)}\mathbb{P}^{A}_{S}\big(\tau(1,2)\in ds, X_{s}(x_{1})=y\big)\frac{M^{h_{L}}}{(2L)^{d}}
\int_{\frac{d}{\gamma}\log (2L)}^{\infty}\mathbb{P}^{A}_{S}\left(\tau(1,2)\in ds\right)\frac{M^{h_{L}}}{(2L)^{d}}
 = \mathbb{P}^{x_{1},x_{2}}_{S}\left(T_{L}> \frac{d}{\gamma}\log (2L)\right)\frac{M^{h_{L}}}{(2L)^{d}}\leq \eps/3
\label{I2}
\end{align}
 by \eqref{ip} $(ii)$. Note that if $s \geq \frac{d}{\gamma}\log (2L)$ then $e^{-\gamma s} \leq \frac{1}{(2L)^{d}}$;
 therefore  
by \eqref{eq:estim_cont1} then $I(1)$ is smaller or equal to
\begin{align}
% I(1)=&\int_{ \frac{d}{\gamma}\log (2L)}^{\infty}\sum_{y\in \Lambda(L)}\mathbb{P}^{A}_{S}\big(\tau(1,2)\in ds, X_{s}(x_{1})=y\big)\sum_{z:d_{S}(y,z)\leq h_{L}}\Big|\mathbb{P}^{x_{3}}_{S}(X_{s}=z)-\frac{1}{(2L)^{d}}\Big|\nonumber\\
\int_{\frac{d}{\gamma}\log (2L)}^{\infty}\sum_{y\in \Lambda(L)}&\mathbb{P}^{A}_{S}\left(\tau(1,2)\in ds, X_{s}(x_{1})=y\right)\sum_{z:d_{S}(y,z)\leq h_{L}}e^{-\gamma s}\nonumber\\
\leq & \int_{\frac{d}{\gamma}\log (2L)}^{\infty}\sum_{y\in \Lambda(L)}\mathbb{P}^{A}_{S}\left(\tau(1,2)\in ds, X_{s}(x_{1})=y\right)\frac{M^{h_{L}}}{(2L)^{d}}< \eps/3
\label{I12}
\end{align}
and the claim follows by \eqref{I0}, \eqref{I2} and \eqref{I12}.
\end{proof}
\begin{rem}
Since $S \in H^L$, Lemma \ref{inc} still holds if for all $A \in \mathcal{A}^{L}(h_{L},n)$ we choose $S \in H^L\cap \mathcal{D}(A)^{c}$. Moreover by 
Lemma \ref{lemD} and \eqref{seq_H} such a set has probability which converges to $1$ as $L$ goes to infinity.
%\begin{equation}
%\mathbf P\big((H^L\cap \mathcal{D}(A)^{c})^{c}\big)\leq \mathbf P(H^L^{c})+\mathbf %P(\mathcal{D}(A)) 
%\end{equation}
%which is as small as we want by taking $L$ large.
\label{remset}
\end{rem}
We prove that the number of particles in the rescaled coalescing random walk converges in law to the number of particles of a Kingman's coalescent.  A similar approach has been used for 
\cite[Theorem $5$]{cf:Cox} and in \cite{cf:CoxGriffeath}.\\
We work by induction on the number of particles $n$. If $n=2$, the induction basis is given by Theorem \ref{th:main}.4. The following lemma shows that the assertion is true before the first collision of two particles.
\begin{lem}
Assume \eqref{ip}.
For each $n\in \mathbb{N}$, $T>0$, $A \in \mathcal{A}^{L}(h_{L},n)$, and 
$\eps>0$ there exists $\widetilde{L}$ such that for each $L>\widetilde{L}$,  $S \in H^L\cap \mathcal{D}(A)^{c}$ and $0\leq t\leq T$, 
$$
\left|\PP_{S}\left(|\xi_{s_{L}t}(A)|=n\right)-\exp\left(-\binom{n}{2}t\right)\right|<\eps
$$
where $s_{L}:=(2L)^{d}\Gp_{\mathcal{B}}(0)$.
\label{coalescenten}
\end{lem}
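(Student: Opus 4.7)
Note that the event $\{|\xi_{s_L t}(A)| = n\}$ coincides with $\{\tau(A) > s_L t\}$, since before any meeting the coalescing walk agrees with $n$ independent random walks. Writing $\tau(A) = \min_{i<j} \tau(i,j)$, my plan is to show that the $\binom{n}{2}$ rescaled meeting times $\tau(i,j)/s_L$ behave jointly like independent $\mathrm{Exp}(1)$ variables; then $\PP_S(\tau(A) > s_L t) \to \exp(-\binom{n}{2} t)$. The clean formal route is inclusion-exclusion: set $E^L_{i,j} := \{\tau(i,j) \leq s_L t\}$; then
\[
\PP_S(\tau(A) > s_L t) = \sum_{B \subseteq \{(i,j):i<j\}} (-1)^{|B|} \PP_S\Bigl(\bigcap_{(i,j) \in B} E^L_{i,j}\Bigr),
\]
so the task reduces to showing, uniformly in $S \in H^L \cap \mathcal{D}(A)^c$ and $t \in [0,T]$, that
\[
\PP_S\Bigl(\bigcap_{(i,j) \in B} E^L_{i,j}\Bigr) \stackrel{L\to\infty}{\to} (1 - e^{-t})^{|B|}
\]
for every nonempty $B$. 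The inclusion-exclusion sum then collapses to $\sum_k (-1)^k \binom{\binom{n}{2}}{k}(1-e^{-t})^k = e^{-\binom{n}{2} t}$, which is the desired conclusion.

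I would prove the displayed convergence by induction on $|B|$. The base case $|B|=1$ is exactly Theorem~\ref{th:main}.4 combined with Remark~\ref{rem_unif}, applicable because every pair in $A \in \mathcal{A}^L_S(h_L,n)$ lies at graph distance at least $h_L \geq t_L^2$, which surpasses the threshold $\alpha_L$ required there. For the inductive step, fix a pair $(i_0,j_0) \in B$, set $\tau^* = \tau(i_0,j_0)$, and use the strong Markov property at $\tau^*$: on $\{\tau^* \leq s_L t\}$ the configuration $(X_{\tau^*}(x_k))_k$ satisfies $X_{\tau^*}(x_{i_0}) = X_{\tau^*}(x_{j_0})$, and a straightforward extension of Lemma~\ref{inc} from four to $n$ particles (by a union bound over the remaining $\binom{n}{2}-1$ pairs) shows that, outside an event of $\PP_S$-probability at most $\eps$, all other pairwise graph distances at time $\tau^*$ still exceed $h_L$. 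On this good event the inductive hypothesis applied to $B \setminus \{(i_0,j_0)\}$ yields a conditional probability close to $(1-e^{-(t-\tau^*/s_L)})^{|B|-1}$. Integrating against the law of $\tau^*/s_L$, whose density tends to $|B|\, e^{-|B|u}$ on $[0,t]$ by a parallel induction applied to $\min_{(i,j) \in B} \tau(i,j)$, and summing over the $|B|$ symmetric choices of the first-meeting pair, produces the identity $|B|\int_0^t e^{-|B|u}(1-e^{-(t-u)})^{|B|-1}\,du = (1-e^{-t})^{|B|}$.

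The main obstacle is the bookkeeping of uniformities: at each induction step Theorem~\ref{th:main}.4 has to be applied to a pair whose starting positions at time $\tau^*$ are random, so one needs uniform convergence over all pairs of starting sites with graph distance $\geq h_L$ on a single quenched set, and this is exactly what Remark~\ref{rem_unif} guarantees when $S \in H^L$. Since the depth of the induction is the fixed finite number $\binom{n}{2}$ and the error introduced at each level is $O(\eps)$, the accumulated error is still $O(\eps)$; uniformity in $t \in [0,T]$ comes for free from the monotonicity of the distribution functions $t \mapsto \PP_S(\bigcap_{B} E^L_{i,j})$ and of the limit $(1-e^{-t})^{|B|}$, as in the proof of Theorem~\ref{th:main}.
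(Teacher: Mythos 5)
Your high-level plan --- reduce to $\PP_S(\tau(A)>s_Lt)$ and then prove asymptotic joint independence of the $\binom n2$ pair meeting times by inclusion--exclusion --- is genuinely different from the paper's route. The paper never proves joint independence: it decomposes $\PP_S^A(H_t(i,j))$ according to which pair meets first, sums over pairs, and arrives at an approximate renewal equation $q_t=\binom n2(1-e^{-t})-\bigl(\binom n2-1\bigr)e^{-t}\int_0^tq_se^s\,ds+R$ for $q_t=\PP_S(\tau(A)\le s_Lt)$, whose solution is identified as $1-\exp(-\binom n2 t)$ via the stability result \cite[Lemma 2]{cf:CoxGriffeath}. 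Your peripheral ingredients (base case from Theorem~\ref{th:main}.4 and Remark~\ref{rem_unif}; the $n$-particle extension of Lemma~\ref{inc} to keep the surviving pairs $h_L$-separated at the meeting time; uniformity in $t$ from monotonicity) are all fine and match what the paper uses.

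However, your inductive step has a circularity exactly where the paper does its real work. You integrate the conditional probability $(1-e^{-(t-u)})^{|B|-1}$ against the law of $\tau^*/s_L$ with $\tau^*=\min_{(i,j)\in B}\tau(i,j)$, asserting that this law converges to $\mathrm{Exp}(|B|)$ ``by a parallel induction''. But
\[
\PP_S\Bigl(\min_{(i,j)\in B}\tau(i,j)\le s_Lt\Bigr)=\sum_{\emptyset\neq B'\subseteq B}(-1)^{|B'|+1}\,\PP_S\Bigl(\bigcap_{(i,j)\in B'}E^L_{i,j}\Bigr),
\]
and the term $B'=B$ is precisely the quantity you are computing at the current level of the induction; the hypothesis on proper subsets does not determine it. If you carry the unknown term through, your decomposition yields not a closed formula but an integral equation: writing $\delta_B(t)=\PP_S(\bigcap_BE^L_{i,j})-(1-e^{-t})^{|B|}$ and using the (correct) identity $|B|\int_0^te^{-|B|u}(1-e^{-(t-u)})^{|B|-1}du=(1-e^{-t})^{|B|}$, one is left with $\delta_B(t)\approx(-1)^{|B|+1}\int_0^t(1-e^{-(t-u)})^{|B|-1}\,d\delta_B(u)$ plus $O(\eps)$ errors, and one must still show that this forces $\delta_B\to0$ uniformly on $[0,T]$. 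That uniqueness/stability step is the analytic heart of the lemma --- it is exactly what the paper imports from \cite[Lemma 2]{cf:CoxGriffeath} --- and it is missing from your argument. To repair the proof you must either establish the convergence of the law of $\min_{(i,j)\in B}\tau(i,j)$ independently (which amounts to redoing the paper's renewal-equation argument) or supply the Gronwall-type lemma that pins down the solution of the perturbed integral equation.
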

\begin{proof}
Note that $\PP_{S}(|\xi_{s_{L}t}(A)|=n)$ and $ \exp\left(-\binom{n}{2}t\right)$ are non-increasing monotone $t$ functions.  We define, for each pair $\{i,j\}\subseteq \{1,2,\ldots, n\}$,
\begin{align*}
&H_{t}(i,j):=\{\tau(i,j) \leq s_{L}t\}; \qquad q_{t}=q_{t}(A):=\mathbb{P}(\tau(A) \leq s_{L}t).
\end{align*}
For all $S \in H^L \cap \mathcal{D}(A)^{c}$,
\begin{equation}
\PP^{A}_{S}\left(H_{t}(i,j)\right)=\PP^{A}_{S}\left(\tau=\tau(i,j)\leq s_{L}t \right)+\sum_{\{k,l\}\neq \{i,j\}}\int_{0}^{s_{L}t}\PP^{A}_{S}\left(\tau=\tau(k,l)\in ds, \tau(i,j)\leq s_{L}t\right)
\label{tempeq}
\end{equation}
Each term of the sum on the right hand side is equal to
\begin{align*}
%\displaystyle \int_{0}^{s_{L}t}\PP^{A}_{S}\big(\tau=\tau(k,l)\in ds, \tau(i,j)\leq s_{L}t\big)\\
\int_{0}^{s_{L}t}\sum_{y,z}\PP^{A}_{S}&\left(\tau=\tau(k,l)\in ds,X_{s}(x_{i})=y,X_{s}(x_{j})=z,\tau(i,j)\leq s_{L}t\right).
%\\ 
%&=\displaystyle \int_{0}^{s_{L}t}\sum_{y,z}\PP^{A}_{S}\big(\tau=\tau(k,l)\in ds,X_{s}(x_{i})=y,X_{s}(x_{j})=z\big)\PP^{y,z}_{S}\big(T_{L} \leq s_{L}t-s\big).
\end{align*}
By Lemma \ref{inc} for all $L$ sufficiently large
\begin{multline}\label{eq:int1}
\displaystyle \int_{0}^{s_{L}t}\sum_{y}\sum_{z:d_{S}(y,z)\leq h_{L}}\PP^{A}_{S}\left(\tau=\tau(k,l)
\in ds,X_{s}(x_{i})=y,X_{s}(x_{j})=z,\tau(i,j)\leq s_{L}t\right)\\
\displaystyle \leq \int_{0}^{\infty}\PP^{A}_{S}\left(\tau=\tau(k,l)\in ds,d_{S}(X_{s}(x_{i}),
X_{s}(x_{j}))\leq h_{L}\right)\leq \eps/(8n^4)
\end{multline}
for all choices of $S \in H^L\cap \mathcal{D}(A)^{c}$, $\{i,j\}\subseteq \{1,\ldots n\}$ and $t\geq 0$.
We are left with evaluating
\begin{align} \label{eq:int2}
\int_{0}^{s_{L}t}\sum_{y}\sum_{z:d_{S}(y,z)> h_{L}}\PP^{A}_{S}\left(\tau=\tau(k,l)\in ds,X_{s}(x_{i})=y,X_{s}(x_{j})=z\right)\PP^{y,z}_{S}
\left(T_{L} \leq s_{L}t-s\right).
\end{align}
By %Remark \ref{rem_unif}
Theorem~\ref{th:main}.2, $|\Prob_{S}^{y,z}(T_{L}\leq s_{L}t-s)-1+\exp(-t+s/s_L)|<\eps/(8n^4) $ 
for all $L$ sufficiently large and for all choices of $S \in H^L\cap \mathcal{D}(A)^{c}$, $y$ 
and $z$ such that $d_{S}(y,z)\geq h_{L}$, $0\leq s \leq t$. 
Then equation \eqref{eq:int2} does not differ by more than $\eps/(4n^4)$ from  
%\begin{align*}
\begin{equation}\label{eq:tempco}
 %&\int_{0}^{s_{L}t}\sum_{y,z}\PP^{A}_{S}\big(\tau=\tau(k,l)\in ds,X_{s}(x_{i})=y,X_{s}(x_{j})=z\big)\Big(1-\exp(-t+s/s_{L})\Big)\nonumber\\
%&=
\int_{0}^{s_{L}t}\PP^{A}_{S}(\tau=\tau(k,l)\in ds)\left(1-\exp(-t+s/s_{L})\right).
\end{equation}
%\end{align*}
Indeed the difference between \eqref{eq:int2} and \eqref{eq:tempco} is not larger than the sum
of \eqref{eq:int1} and
\[
 \int_0^\infty |\Prob_{S}^{y,z}(T_{L}\leq s_{L}t-s)-1+\exp(-t+s/s_L)|\PP^{A}_{S}(\tau=\tau(k,l)\in ds),
\]
which is not larger than $\eps/(8n^4)$ if $L$ is sufficiently large.
Integrating by parts and changing variables, we get
\begin{align}
&\int_{0}^{s_{L}t}\PP^{A}_{S}\left(\tau=\tau(k,l)\in ds\right)\left(1-\exp(-t+s/s_{L})\right) \nonumber\\
&=\int_{0}^{s_{L}t}\PP^{A}_{S}\left(\tau=\tau(k,l)\leq s\right)\frac{1}{s_{L}}\exp\left(-t+s/s_{L}\right)ds \nonumber\\
&=\int_{0}^{t}\PP^{A}_{S}\left(\tau=\tau(k,l)\leq s_{L}u\right)\exp\left(-(t-u)\right)du.
\label{tempmin2}
\end{align}
By Theorem~\ref{th:main}.2, for all $L$ sufficiently large $|\PP^{A}_{S}(H_{t}(i,j)\leq t)-
(1-e^{-t})|\leq \eps/(4n^2)$ 
for all $S \in H^L\cap \mathcal{D}(A)^{c}$, $(i,j)\subseteq \{1,\ldots n\}$ and $t\geq 0$. 
Summing over all pairs of $i$ and $j$ on 
\eqref{tempeq} and using \eqref{tempmin2}
\begin{align*}
q_t=&\sum_{\{i,j\}}\PP^{A}_{S}\left(\tau=\tau(i,j)\leq s_{L}t \right)\\
=&\sum_{i,j}\PP^{A}_{S}\left(H_{t}(i,j)\right)-\sum_{\{i,j\}}\sum_{\{k,l\}\neq \{i,j\}}\int_{0}^{s_{L}t}
\PP^{A}_{S}\left(\tau=\tau(k,l)\in ds,\tau(i,j)\leq s_{L}t\right)\\
%=& \sum_{\{i,j\}}(1-e^{-t})-\sum_{\{i,j\}}\sum_{\{k,l\}\neq \{i,j\}}\int_{0}^{t}\PP^{A}_{S}\big(\tau=\tau(k,l)\leq s_{L}s\big)e^{-t+s}ds+R\\
=& \binom{n}{2}(1-e^{-t})-\left(\binom{n}{2}-1\right)e^{-t}\int_{0}^{t}q_se^sds+R
\end{align*}
where the modulus of $R$, for all $L$ sufficiently large for all choices of $S \in H^L\cap \mathcal{D}(A)^{c}$, $y$ and $z$ such that $d_{S}(y,z)\geq h_{L}$ and for all $0\leq t\leq T$ is smaller than $\eps/2$. 
We know (see \cite[Lemma 2]{cf:CoxGriffeath}) that if 
$$
u^{L}(t)=\binom{n}{2}(1-e^{-t})-\left(\binom{n}{2}-1\right)e^{-t}\int_{0}^{t}u^{L}(s)e^sds+R
$$
then for $L$ large enough $u^{L}(t)$ does not differ by more than $\eps/2$ from $u(t)$, the solution of  
$$
u(t)=\binom{n}{2}(1-e^{-t})-\left(\binom{n}{2}-1\right)e^{-t}\int_{0}^{t}u(s)e^sds
$$
which is
$$
u(t)=1-\exp\left(-\binom{n}{2}t\right)
$$
and the claim follows.
\end{proof}
We are now ready to prove the final result.\\
%We can restate  as follows.  
%\begin{theorem}
%Under hypothesis (\ref{ip}), for each $A \in \mathcal{A}^{L}(h_L,n)$ and $\eps>0$ there exists $\widetilde{L}$ such that for each $L>\widetilde{L}$, $S \in H^L\cap \mathcal{D}(A)^{c}$ and $t\geq 0$
%\begin{equation}
%\Big|\PP^{A}_{S}\Big(|\xi_{s_{L}t}(A)|< k\Big)-P_{n}\Big(D_{t}< k\Big)\Big|<\eps
%\label{eq:coalescing}
%\end{equation}
%where $s_{L}=(2L)^{d}\Gp_{\mathcal{B}}(0)/2$.
%\label{npart}
%\end{theorem}

\noindent \emph{Proof of Theorem \ref{npart}.}
We fix $A \in \mathcal{A}^L(h_L,n)$ and we show \eqref{eq:coalescing} by induction on $n$. 
%Note that if $k=n$ the proof is given by Lemma \ref{coalescenten}.\\
Theorem \ref{th:main} gives the result when $n=2$ for all $k$ (that is $k=2$) and 
Lemma \ref{coalescenten} gives the result for $n$ and $k=n$.\\ 
Suppose the result holds for $n-1$ for all $k$. We have to prove it for $n$ and $k<n$.
\begin{align}
\PP^{A}_{S}(|\xi_{s_{L}t}(A)|<k)&=\int_{0}^{s_{L}t}\PP^{A}_{S}(\tau \in ds,|\xi_{s_{L}t}(A)|<k)\nonumber\\
&=\int_{0}^{s_{L}t}\sum_{B\in I(n-1)}\PP^{A}_{S}(\tau \in ds,\xi_{s}(A)=B)\PP^{B}_{S}(|\xi_{s_{L}t-s}(B)|<k).
\label{tmp_ncoal}
\end{align}
Using Lemma \ref{inc}, if $B \notin \mathcal{A}_S^L(h_L,n-1)$, for all $L$ sufficiently large 
\begin{align*}
&\int_{0}^{s_{L}t}\sum_{B\notin \mathcal{A}_S^L(h_L,n-1)}\PP^{A}_{S}(\tau \in ds,\xi_{s}(A)=B)\PP^{B}_{S}(|\xi_{s_{L}t-s}(B)|<k)\\
%&\leq \int_{0}^{s_{L}t}\sum_{B\notin \mathcal{A}_S^L(h_L,n-1)}\PP^{A}_{S}(\tau \in ds,\xi_{s}(A)=B)\\
%& = \sum_{\{i,j\}}\int_{0}^{s_{L}t}\sum_{B\notin \mathcal{A}_S^L(h_L,n-1)}\PP^{A}_{S}(\tau=\tau(i,j) \in ds,\xi_{s}(A)=B)\\
%& \leq \sum_{\{i,j\}}\int_{0}^{s_{L}t}\sum_{B\notin \mathcal{A}_S^L(h_L,n-1)}\PP^{A}_{S}(\tau(i,j) \in ds,\xi_{s}(A)=B)\\
%& =\sum_{\{i,j\}}\int_{0}^{s_{L}t}\PP^{A}_{S}(\tau(i,j) \in ds,\exists \{k,l\}\neq \{i,j\}:d_{S}(X_{s}(x_{k}),X_{s}(x_{l}))\leq h_{L})\\
& \leq \sum_{\{i,j\}}\sum_{\{k,l\}\neq \{i,j\}}\int_{0}^{s_{L}t}\PP^{A}_{S}(\tau(i,j) \in ds,d_{S}(X_{s}(x_{k}),X_{s}(x_{l})\leq h_{L})<\eps/3
\end{align*}
since $n$ is fixed, for each $S \in H^L\cap \mathcal{D}(A)^{c}$, $t\geq 0$.\\
% e can only consider the sets $B \in \mathcal{A}_S^L(h_L,n-1)$. Indeed,
Changing variables, setting $s=s_{L}v$, then \eqref{tmp_ncoal} is equal to 
\begin{align*}
%&\int_{0}^{s_{L}t}\sum_{B\in I(n-1)}\PP^{A}_{S}(\tau \in ds,\xi_{s}(A)=B)\PP^{B}_{S}(|\xi_{s_{L}t-s}(B)|<k)\\
&\int_{0}^{t}\sum_{B\in \mathcal{A}_S^L(h_L,n-1)}\PP^{A}_{S}(\tau \in s_{L}dv,\xi_{s_{L}v}(A)=B)\PP^{B}_{S}(|\xi_{s_{L}(t-v)}(B)|<k)+R.
\end{align*}
where the modulus of $R$ is smaller than $\eps/3$ for all $L$ sufficiently large for all choices of $A\in \mathcal{A}^L(h_L,n)$, $S \in H^L\cap \mathcal{D}(A)^{c}$, $0\leq t\leq T$. By induction hypothesis, for all $L$ sufficiently large 
$$
\left|\PP^{B}_{S}(|\xi_{s_{L}(t-s)}(B)|<k)-P_{n-1}(D_{t-s}<k)\right|<\eps/3
$$
for $B \in \mathcal{A}_S^L(h_L,n-1)$ and for each $S\in H^L\cap \mathcal{D}(A)^{c}$ and $0 \leq s \leq t$. Thus the last term of the previous integral differs at most by $\eps$ from
$$
\int_{0}^{t}\PP^{A}_{S}\left(\frac{\tau}{s_{L}}\in dv\right)P_{n-1}(D_{t-v}<k)=-\int_{0}^{t}\PP^{A}_{S}\left(\frac{\tau}{s_{L}}\leq v\right)\frac{d}{dv}P_{n-1}(D_{t-v}<k)dv
$$
after an integration by parts.
%\begin{align*}
%&\int_{0}^{t}\PP^{A}_{S}\Big(\frac{\tau}{s_{L}}\in dv\Big)P_{n-1}(D_{t-v}<k)
%&=\PP_{S}\Big(\frac{\tau}{s_{L}}\leq t\Big)P_{n-1}(D_{0}<k)-\int_{0}^{t}\PP^{A}_{S}\Big(\frac{\tau}{s_{L}}\leq v\Big)\frac{d}{dv}P_{n-1}(D_{t-v}<k)dv\\
%-\int_{0}^{t}\PP^{A}_{S}\Big(\frac{\tau}{s_{L}}\leq v\Big)\frac{d}{dv}P_{n-1}(D_{t-v}<k)dv
%\end{align*}
%since for $k<n$ we have $P_{n-1}(D_{0}<k)=0$.\\
Note that $v \to P_{n-1}(D_{t-v}=k)$ is a continuous function; therefore by definition of Kingman's coalescent and because the right hand side $P_{n}(D_{t}<k)$ is finite, we get 
(see \cite{cf:Cox}) 
\begin{align*}
\PP^{A}_{S}(|\xi_{s_{L}t}(A)|<k)=& \sum_{i=1}^{k-1}\int_{0}^{t}\binom{n}{2}\exp\left(-\binom{n}{2}v\right)P_{n-1}(D_{t-v}=k)dv+R\\
= & \sum_{i=1}^{k-1}P_{n}(D_{t}=k)+R=P_{n}(D_{t}<k)+R
\end{align*}
where the modulus of $R$, for all $L$ sufficiently large, 
for all choices of $S\in H^L\cap \mathcal{D}(A)^{c}$ and $0\leq t\leq T$ is smaller than $\eps$.
\hfill $\square$
\begin{rem}
In Theorem \ref{npart} we fix $A \in \mathcal{A}^{L}(h_L,n)$ and the result holds in a sequence of small world graphs depending on $A$. One can prove that the same result holds for the sequence $(H^L)_L$ uniformly in $\mathcal{A}_S^{L}(h_L,n)$ and $S\in H^L$. 
%Namely, under hypothesis (\ref{ip}), for each $\eps>0$ there exists $\widetilde{L}$ such that for each $L>\widetilde{L}$, $S \in H^L$, $A \in \mathcal{A}_S^{L}(h_L,n)$ and $t\geq 0$
%\begin{equation}
%\Big|\PP^{A}_{S}\Big(|\xi_{s_{L}t}(A)|< k\Big)-P_{n}\Big(D_{t}< k\Big)\Big|<\eps.
%\label{coal_unif}
%\end{equation}
\label{npart_unif}
\end{rem}
\begin{rem}
By summing over all realizations of the small world graph, one can get the annealed result as a corollary of 
Theorem \ref{npart}.
%: assume (\ref{ip}), for each $A \in \mathcal{A}^{L}(h_L,n)$ and $\eps>0$ there exists $\widetilde{L}$ such that for each $L>\widetilde{L}$ and $t\geq 0$
%\begin{equation}
%\Big|\mathbb{P}(|\xi_{s_{L}t}(A)|=k)-P(D_{n}<k)\Big|<\eps.
%\end{equation}
\end{rem}

\noindent \textbf{Aknowledgements.}
D.Borrello thanks Laboratoire MAP5, Universit\'e Paris Descartes for hospitality and 
the support of the French
Ministry of Education through the ANR BLAN07-218426 grant. The authors wish to thank
Elisabetta Candellero for fruitful discussions on the comparison of
$G_{\Z^d}(0)$ and $G_{\mathcal B}(0)$.

\renewcommand{\appendixpagename}{Appendix}
\appendixpage

\appendix 
\section{Comparison with the d-dimensional torus}
\label{App}
%If $d\geq 3$, \cite[Theorem 7]{cf:Cox} states that
%\begin{theorem}[Spitzer] 
%Suppose $d\geq 3$ and $X_{0}=e_{1}$. Then for each $t \geq 0$ as $L \to \infty$
%\begin{equation}
%\Prob^{x_L}\Big(\frac{W_L}{L^d}>t\Big)\to (1-\gamma_{d})\exp(-t/G_{\Z^d}(0,0))
%\label{eq:spitzd3}
%\end{equation}
%where $\gamma_{d}=(G_{\Z^d}(0,0)-1)/G_{\Z^d}(0,0)$.
%\label{conf:spitzd3}
%\end{theorem}
As observed in the introduction, if we consider the usual neighbourhood structure on $\Z^d$,
then the big world is the Cayley graph of $\Z^d*\Z_2$. 
Recall that we are given a transition matrix $\Delta$ which defines a random walk
on $\Z^d$ and a positive number $\beta$ which gives the probability of moving in the
$\Z_2$ direction on the big world (one moves with probability $1-\beta$ in the $\Z^d$
component).
In order to compare the asymptotic
behaviour of the meeting time of two walkers on the small world and on the torus,
%%the rescaling factor is $(2L)^d$ both on the d-dimensional torus and on the small world if $d\geq 3$.
we need to compare $\Gp_{\mathbb{Z}^{d}}(0)$ with $\Gp_{\mathcal{B}}(0)$. 
Proposition \ref{beta} gives some information in this direction.
\begin{pro}
Let $d\geq 3$, $\Delta$ be the transition matrix of an adapted,
 translation invariant symmetric random walk
on $\Z^d$ and $\beta>0$.\\
$i)$ There exists $\beta_{1}>0$ such that $\Gp_{\mathbb{Z}^{d}}(0)< \Gp_{\mathcal{B}}(0)$ for each $\beta\in [\beta_{1},1]$.\\
$ii)$ There exists $\beta_{2}>0$ such that  $\Gp_{\mathbb{Z}^{d}}(0)> \Gp_{\mathcal{B}}(0)$ for each $\beta\in (0,\beta_{2}]$.
\label{beta}
\end{pro}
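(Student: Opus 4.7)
\emph{Plan.} The strategy is to exploit the tree-of-bags structure of $\mathcal B$: as a graph, $\mathcal B$ consists of a ``root bag'' (a copy of $\Z^d$ carrying the vertices $+(z)$, $z\in\Z^d$) to which, via each long-range edge, an isomorphic ``child bag'' is attached --- at $+(z)$ with $z\neq 0$ this bag has entry point $+(z,0)$, and at $+(0)=0$ the ``$-$''-bag is attached through $0\leftrightarrow -(0)$; each non-root bag then has its own children, recursively. Since $\Gp_{\mathcal B}(0)=\tfrac12\widetilde{G}_{\mathcal B}(0)$ and $\Gp_{\Z^d}(0)=\tfrac12\,G_\Delta(1)$, with $G_\Delta(z):=\sum_{n\ge0}z^n p^{(n)}_{\Z^d}(0,0)$, the claim reduces to comparing $\widetilde{G}_{\mathcal B}(0)$ with $G_\Delta(1)$; note that $d\ge 3$ gives $G_\Delta(1)<\infty$ and $R_{\Z^d}:=\sum_y\Delta(y)P^y_{\Z^d}(\text{hit }0)<1$ by transience.

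For $(i)$, the plan is to lower bound $\widetilde{G}_{\mathcal B}(0)$ by counting visits to $0$ before the walker's first $\Delta$-step. In discrete time, let $T\sim\mathrm{Geom}(1-\beta)$ be the time of that step: before $T$ the walker just alternates $+(0)\leftrightarrow -(0)$, hence is at $0$ at every even time, giving $\lceil T/2\rceil$ visits to $0$ on $[0,T-1]$, with mean at least $\tfrac{1}{2(1-\beta)}$. Consequently $\widetilde{G}_{\mathcal B}(0)\ge\tfrac{1}{2(1-\beta)}\to\infty$ as $\beta\to 1^-$, while $G_\Delta(1)$ stays fixed and finite: this yields $(i)$.

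For $(ii)$, I would encode the $\mathcal B$-walk by its trace on the root bag. Let $p=p(\beta)$ be the probability that a walker entering a non-root bag through its entry point ever exits through the same edge; by translation-invariance and the $+\leftrightarrow -$ involution this quantity is bag-independent, and a one-step decomposition gives $p=\beta/(1-(1-\beta)H)$ with $H=\sum_y\Delta(y)h(y)$, $h(y):=P^{+(y)}_{\mathcal B}(\text{hit }0)$. A standard coupling of the $\mathcal B$-walk and the $\Z^d$-walk on the $\Delta$-moves yields $H(\beta)\to R_{\Z^d}<1$ as $\beta\to 0$, so $p=O(\beta)$ and $p<1$ for every small $\beta$. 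The strong Markov property then identifies the root-bag trace as a $\Z^d$-valued Markov chain which at each step advances by $\Delta$ with probability $1-\beta$, performs a self-loop via a returning bag-excursion with probability $\beta p$, or is killed by a non-returning excursion with probability $\beta(1-p)$; visits to $0$ of this chain coincide with those of the original walk. A Fourier inversion on $[0,2\pi]^d$ then gives
\[
\widetilde{G}_{\mathcal B}(0)=\frac{1}{1-\beta p}\,G_\Delta(\rho),\qquad \rho:=\frac{1-\beta}{1-\beta p}\in(0,1),
\]
which reduces to $G_\Delta(1)$ at $\beta=0$. The claim $(ii)$ becomes $G_\Delta(1)-G_\Delta(\rho)>\beta p\,G_\Delta(1)$; using $1-\rho^n\ge 1-\rho$ for $n\ge 1$ one has $G_\Delta(1)-G_\Delta(\rho)\ge(1-\rho)(G_\Delta(1)-1)$, and since $1-\rho=\beta(1-p)/(1-\beta p)$ is of order $\beta$ whereas $\beta p$ is of order $\beta^2$, the inequality holds for all sufficiently small $\beta>0$.

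The main obstacle I anticipate is the rigorous derivation of the closed-form expression for $\widetilde{G}_{\mathcal B}(0)$: one has to apply the strong Markov property at the successive entry/return times of bag excursions, and use self-similarity to argue that every non-root bag --- including the exceptional ``$-$''-bag at $+(0)$, handled by the $+\leftrightarrow -$ involution of $\mathcal B$ --- has the same escape probability $p$. The continuity of $\beta\mapsto H(\beta)$ at $\beta=0$ is the only other auxiliary fact and follows from a standard coupling argument.
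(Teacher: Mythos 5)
Your part (i) is essentially the paper's argument: the paper bounds $\Prob^0_{\mathcal B}(X_{2n}=0)\ge\beta^{2n}$ via the trajectory that oscillates $n$ times along the long-range edge at $0$, giving $\G_{\mathcal B}(0)\ge(1-\beta^2)^{-1}\to\infty$, while your geometric-time count of the same oscillations gives the equivalent bound $\tfrac12(1-\beta)^{-1}$; both work since $\G_{\Z^d}(0)<\infty$ is independent of $\beta$. Part (ii) is where you genuinely diverge. The paper proceeds analytically through the free-product machinery of Woess: it characterizes $\widehat G_\beta:=\G_{\mathcal B}(0)$ as a fixed point of $\Phi_{\Z^d\ast\Z_2}(t)=\tfrac12(1+\sqrt{1+4\beta^2t^2})+\Phi_{\Z^d}((1-\beta)t)-1$, shows $\widehat G_\beta\to\widehat G$ as $\beta\to0$, and extracts the sign of $\widehat G_\beta-\widehat G$ from a second-order Taylor expansion of $\Phi_{\Z^d}$ at $\widehat G$ --- a rather delicate computation. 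Your excursion decomposition instead produces the closed form $\widetilde{\G}_{\mathcal B}(0)=(1-\beta p)^{-1}G_\Delta(\rho)$ with $\rho=(1-\beta)/(1-\beta p)$, and I have checked that it is sound: every long-range edge of $\mathcal B$ is a cut edge, all child subtrees (the $-$-bag included) are isomorphic, so the strong Markov property does yield the trace chain on the root bag with the transition mechanism you describe, and the binomial resummation gives the stated identity; the comparison $G_\Delta(1)-G_\Delta(\rho)\ge(1-\rho)\bigl(G_\Delta(1)-1\bigr)=\Theta(\beta)$ against $\beta p\,G_\Delta(1)=\Theta(\beta^2)$ then closes the argument (you should record that $G_\Delta(1)>1$, which holds because $\sum_y\Delta(0,y)^2>0$). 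The one step to tighten is the assertion $H(\beta)\to R_{\Z^d}$: the coupling gives the lower bound easily, but what you actually need is an upper bound on $H$ uniformly away from $1$, and proving the upper half of the limit by coupling risks circularity with the bound on $p$. The clean route is to note that the root-bag trace, conditioned on survival, is a lazy $\Delta$-walk, whence $h(y)\le P^y_{\Z^d}(\text{hit }0)$ and $H\le R_{\Z^d}<1$ for every $\beta$, with no limit required. What your approach buys is an explicit formula for $\G_{\mathcal B}(0)$ in terms of $G_\Delta$, which could even serve the numerical question raised in the introduction about locating the crossover value of $\beta$; what the paper's approach buys is a ready-made citation for the free-product identity in place of the hands-on verification of the tree-of-bags decomposition.
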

\begin{proof}
Since $\Gp_{\mathcal{B}}(0)=\G_{\mathcal{B}}(0)/2$ (we defined $\G_{\mathcal{B}}(0)$ in 
Section \ref{bigworld}) and $\Gp_{\mathbb{Z}^{d}}(0)=
\G_{\Z^d}(0)/2$ (where $\G_{\Z^d}(0)$ is the expected time spent at 0 by the walk on $\Zd$), we prove that $\G_{\mathbb{Z}^{d}}(0)$ is smaller (respectively larger) than $\G_{\mathcal{B}}(0)$ for $\beta$ large (respectively small) enough.\\
$i)$ Since $\Prob^0_{\mathcal{B}}(X_{2n}=0)\geq \beta^{2n}$ (note that one possible trajectory
of the walk is the one which from 0 takes the long range edge and back $n$ times) we get
\begin{align*}
\G_{\mathcal{B}}(0)\geq & \sum_{n=0}^{\infty}\beta^{2n}=\frac{1}{1-\beta^{2}}.
\end{align*}
and the claim follows by taking $\beta$ close to $1$ since $G_{\mathbb{Z}^{d}}(0)<\infty$ if $d\geq 3$.\\
%$$
%\lim_{\beta\to 1^-}\G(0)=\infty.
%$$
$ii)$ Let  $\widehat G$ be the Green function of the Markov chain $Y$ on $\Z^d$
which has transition matrix $\Delta$, and
$\widehat F$ be the generating function of its first time returns
\begin{align*}
\widehat{G}(z)=\sum_{n=0}^{\infty}\Prob^0(Y_{n}=0)z^{n}; \qquad 
\widehat{F}(z)=\sum_{n=0}^{\infty}\Prob^{0}(Y_{n}=0,Y_k\neq 0 \text{ for all } k<n)z^{n}.
\end{align*}
Note that $\widehat G(1)=G_{\Z^d}(0)$.
By \cite[Proposition 9.10]{cf:Woess}, there exists $r>0$ and a function $\Phi(\cdot)$ such that
\begin{equation}
\widehat{G}(z)=\Phi(z\widehat{G}(z)), \qquad z \in [0,r).
\label{PhiG}
\end{equation}
Moreover $\Phi\in \mathcal C^2$ and it is strictly increasing and strictly convex.\\
Let $P$ be the transition matrix on the big world. % defined in Section \ref{bigworld}
 We denote by $\Phi_{\Z^d \ast \Z_{2}}$, $\Phi_{\Z^d}$ and $\Phi_{\Z_{2}}$ the functions which satisfy 
\eqref{PhiG} respectively for the Markov chains $X$ on the big world, $Y$ on $\Z^d$ and 
the simple random walk on $\Z_2$.
The function $\Phi_{\Z_2}(t)$ can be computed explicitly,
\[
 \Phi_{\Z_2}(t)=\frac12 (1+\sqrt{1+4t^2}).
\]
By \cite[Theorem 9.19]{cf:Woess}
\begin{equation}\label{eq:phi}
 \Phi_{\Z^d*\Z_2}(t)=\frac12 (1+\sqrt{1+4\beta^2t^2})+\Phi_{\Z^d}((1-\beta)t)-1.
\end{equation}
We denote by $\widehat{G}_\beta=\widehat{G}_{\Z^d \ast \Z_2}(1)=\G_{\mathcal{B}}(0)$, and
by $\widehat{G}=\widehat{G}_{\Z^d}(1)=G_{\Z^d}(0)$. Note that, by \eqref{PhiG}
$\widehat{G}_{\beta}$ is a fixed point of $\Phi_{\Z^d \ast \Z_2}$, while $\widehat G$
is a fixed point of $\Phi_{\Z^d}$. 
We write \eqref{eq:phi} with $t=\widehat{G}_\beta$:
\begin{equation}
 \label{eq:appendix}
 \widehat{G}_\beta=-\frac12 +\frac12\sqrt{1+4\beta^2\widehat{G}_\beta^2}+
\Phi_{\Z^d}((1-\beta)\widehat{G}_\beta).
\end{equation}
Our goal is to write the second member of \eqref{eq:appendix} in a neighbourhood of $\beta=0$,
as a function of $\widehat G$.

We note that $\displaystyle \lim_{\beta\to 0}\widehat{G}_\beta=\widehat{G}$: to prove this 
denote by $X$ and $Y$  the random walks on $\Z^d*\Z_2$ and on $\Z^d$
respectively, both starting from the identity 0 of the group.
Then, since every trajectory from 0 to 0 on $\Z^d*\Z_2$ projects onto a trajectory from
0 to 0 in $\Z^d$, we have
\[
 \Prob(X_{2n}=0)\le\sum_{k=0}^n\Prob(Y_{2n-2k}=0)\beta^{2k}.
\] 
Recognizing in the second term the general term of the product of two series, we get
\[
 \widehat G_\beta=\sum_{n=0}^\infty\Prob(X_{2n}=0)\le \sum_{n=0}^\infty\Prob(Y_{2n}=0)
\sum_{k=0}^\infty\beta^{2k}=\frac{\widehat G}{1-\beta^2},
\]
whence $\limsup_{\beta\to0}\widehat G_\beta\le\widehat G$.\\
Let $A_m$ be the event that the trajectory of $X$ up to step $2m$ lies entirely in the first copy of
$\Z^d$: for all $m$
\[
 \widehat G_\beta\ge\sum_{n=0}^m\Prob(X_{2n}=0)=\sum_{n=0}^m\Prob(X_{2n}=0|A_m)(1-\beta)^{2m}
+\sum_{n=0}^m\Prob(X_{2n}=0|A_m^c)(1-(1-\beta)^{2m}).
\]
Note that $\sum_{n=0}^m\Prob(X_{2n}=0|A_m^c)\le m$, thus for all $m$,
\[
 \liminf_{\beta\to0}\widehat G_\beta\ge \liminf_{\beta\to0}\sum_{n=0}^m\Prob(X_{2n}=0|A_m)=
\sum_{n=0}^m\Prob(Y_{2n}=0),
\]
and $\liminf_{\beta\to0}\widehat G_\beta\ge\widehat G$.

Notice that as $\beta\to 0$
\[
 \sqrt{1+4\beta^2\widehat{G}_\beta^2}=1+2\beta^2\widehat{G}_\beta^2+o(\beta^3\widehat{G}_\beta^3).
\]
and by Taylor expansion of $\Phi_{\Z^d}$ centered at $\widehat{G}$ with Lagrange form of the remainder: 
\[
 \Phi_{\Z^d}((1-\beta)\widehat{G}_\beta)=%\Phi_{\Z^d}(\widehat{G})
\widehat{G}+\Phi_{\Z^d}^\prime(\widehat{G})\left[(1-\beta)\widehat{G}_\beta-\widehat{G}\right]+
\frac12\Phi_{\Z^d}^{''}(y)(y-\widehat{G})^2,
\]
where $y$ is between $\widehat{G}$ and $(1-\beta)\widehat{G}_\beta$. Two useful formulas 
for $\Phi^\prime$ and $\Phi^{''}$ can be found in \cite[p.99]{cf:Woess}:
\[
 \Phi^\prime(t)=1/(z+\widehat{G}(z)/\widehat{G}^\prime(z)),\qquad
\Phi^{''}(t)=(\widehat{G}(z)/(\widehat{G}(z)+z\widehat{G}^\prime(z)))^3\widehat{F}^{''}(z),
\]
where $z$ is such that $t=z\widehat{G}(z)$. If $t=\widehat{G}$ then 
$$
\Phi^\prime_{\Z^d}(\widehat{G})=\frac{\widehat{G}^\prime}{\widehat{G}^\prime+\widehat{G}}, \qquad
\Phi^{''}_{\Z^{d}}(\widehat{G})=\left(\frac{\widehat{G}}{\widehat{G}+\widehat{G}^\prime}\right)^3
\widehat{F}^{''}(1),
$$
where $\widehat{G}^\prime=\frac{d}{dz}\widehat{G}_{\Z^d}(z)|_{z=1^-}$. 
Therefore we may write \eqref{eq:appendix} as
$$
\widehat{G}_{\beta}-\widehat{G}=\beta^{2}\widehat{G}_{\beta}^{2}+
\frac{\widehat{G}^\prime}{\widehat{G}^\prime+
\widehat{G}}\left[\widehat{G}_{\beta}-\widehat{G}-\beta \widehat{G}_{\beta}\right]+
\frac12\Phi_{\Z^d}^{''}(y)(y-\widehat{G})^2+o(\beta^3 \widehat{G}^3_{\beta})
$$
Since $(y-\widehat{G})^2\leq (\widehat{G}_{\beta}-\widehat{G}-\beta \widehat{G}_\beta)^2$ we get
\begin{multline*}
 (\widehat{G}_{\beta}-\widehat{G})\frac{\widehat{G}}{\widehat{G}^\prime+\widehat{G}}\leq 
\beta^2 \widehat{G}^2_{\beta}-\beta \widehat{G}_\beta
\frac{\widehat{G}^\prime}{\widehat{G}^\prime+\widehat{G}}\\
+\frac12\Phi_{\Z^d}^{''}(y)
\left[(\widehat{G}_\beta-\widehat{G})^2+\beta^2\widehat{G}_\beta^2-2\beta 
\widehat{G}_\beta(\widehat{G}_\beta-\widehat{G})+o(\beta^3 \widehat{G}_{\beta}^3)\right].
\end{multline*}
Thus
\begin{multline*}
%&(\widehat{G}_{\beta}-\widehat{G})\left[\frac{\widehat{G}}{\widehat{G}^\prime
%+\widehat{G}}+\Phi_{\Z^d}^{''}(y)\beta \widehat{G}_\beta\right]\leq  
%-\beta \widehat{G}_\beta\frac{\widehat{G}^\prime}{\widehat{G}^\prime+\widehat{G}}+
%frac12\Phi_{\Z^d}^{''}(y)(\widehat{G}_\beta-\widehat{G})^2+\beta^2\widehat{G}_\beta^2(1+frac12\Phi_{\Z^d}^{''}(y)+
%o(\beta \widehat{G}_{\beta})\\&
(\widehat{G}_{\beta}-\widehat{G})\left[\frac{\widehat{G}}{\widehat{G}^\prime+\widehat{G}}+
\Phi_{\Z^d}^{''}(y)\beta \widehat{G}_\beta-frac12\Phi_{\Z^d}^{''}(y)(\widehat{G}_\beta-
\widehat{G})\right]\\
\leq  
-\beta \widehat{G}_\beta\left[\frac{\widehat{G}^\prime}{\widehat{G}^\prime+\widehat{G}}+
\beta \widehat{G}_\beta(1+frac12\Phi_{\Z^d}^{''}(y)+o(\beta \widehat{G}_{\beta}))\right].
\end{multline*}
Note that by convexity $\Phi^{''}> 0$, and
by continuity %of $\Phi^{''}_{\Z^d}$ 
we get that $\Phi^{''}_{\Z^d}(y)\stackrel{\beta\to 0}{\to}\Phi^{''}_{\Z^d}(\widehat G)>0$.
Then the coefficient of $(\widehat{G}_\beta-\widehat{G})$ on the left hand side 
%as $\beta\to 0$ 
%is asymptotically $\widehat{G}/(\widehat{G}+\widehat{G}^{\prime})$, which 
is strictly positive when $\beta$ is small;  while
the coefficient of $\beta \widehat{G}_{\beta}$ on the right hand side 
%is asymptotically $-\widehat{G}^{\prime}/(\widehat{G}+\widehat{G}^\prime)$, which 
is strictly negative when $\beta$ is small. Whence $\widehat G_\beta-\widehat G$
has to be negative for $\beta$ sufficiently small and the 
claim follows.
\end{proof}

\end{document}